\newcommand{\eps}{\varepsilon}
\newcommand{\bc}{\begin{center}}
\newcommand{\ec}{\end{center}}
\newcommand{\I}{\mathcal{I}}
\newcommand{\F}{\mathcal{F}}
\newcommand{\SSi}{\mathbb{S}i}
\newcommand{\forc}{\Vdash}
\newcommand{\Anp}[1]{\left\langle\,#1\,\right\rangle}
\newcommand{\anp}[1]{\langle\, #1 \,\rangle}
\newcommand{\set}[1]{\left\{\, #1\,\right\}}
\DeclareMathOperator{\Dom}{Dom}
\DeclareMathOperator{\Cod}{Cod}
\newtheorem{thm}{Theorem}[section]
\newtheorem{lem}[thm]{Lemma}
\newtheorem{prop}[thm]{Proposition}
\newtheorem{cor}[thm]{Corollary}
\newtheorem{fact}[thm]{Fact}
\theoremstyle{definition}
\newtheorem{prob}[thm]{Problem}
\newtheorem{df}[thm]{Definition}
\newtheorem{exa}[thm]{Example}
\newtheorem{rem}[thm]{Remark}
\title[P-measures in models without P-points]{P-measures in models without P-points}
\author{Piotr Borodulin--Nadzieja}
\address[Piotr Borodulin-Nadzieja]{Mathematical Institute, University of Wroc\l aw \\   pl. Grunwaldzki 2, 50-384 Wroc\l aw, Poland}
\email{pborod@math.uni.wroc.pl}
\author{Jonathan Cancino-Manr\'{i}quez}
\address[Jonathan Cancino-Manr\'{i}quez]{Czech Academy of Sciences \\ \v{Z}itn'{a} 25, Praha 1, Czech Republic}
\email{cancino@math.cas.cz}
\author{Adam Morawski}
\address[Adam Morawski]{Mathematical Institute, University of Wroc\l aw \\   pl. Grunwaldzki 2, 50-384 Wroc\l aw, Poland}
\email{addam.morawski@gmail.com}
\thanks{The first author was supported by the 
 Polish National Science Center under the Weave-UNISONO call in the
Weave programme, no. 2021/03/Y/ST1/00124. \\
\indent The second author was supported by the "Programme to support
prospective human resources -- post Ph.D. candidates" of the Czech
Academy of Sciences, project L100192251.}
\subjclass[2010]{Primary: 03E05, 03E35. Secondary: 03E75, 28E15.}
\keywords{P-points, P-measures, Additive Property, Silver forcing, random forcing, rapid filters}
\begin{document}

\begin{abstract} We answer in negative the problem if the existence of a P-measure implies the existence of a P-point. Namely, we show that if we add random reals to a certain 'unique P-point' model, then in the resulting model we will have a
	P-measure but not P-points. Also, we investigate the question if there is a P-measure in the Silver model. We show that rapid filters cannot be extended to a P-measure in the extension by $\omega$ product of Silver forcings and that in the model
	obtained by the countable support $\omega_2$-iteration of countable product of Silver forcings there are no P-measures of countable Maharam type.
\end{abstract}

\maketitle

\section*{Introduction}
We call an ultrafilter $\mathcal{U}$ on $\omega$ a P-point if every decreasing sequence of its elements has a pseudointersection in $\mathcal{U}$. Notice that non-principal ultrafilters on $\omega$ cannot be closed under countable intersections. In a sense, a non-principal
P-point is an ultrafilter as close to being closed under countable intersections as possible. There is a huge literature about P-points: they proved their importance in infinitary combinatorics, topology, and forcing.

The existence of P-points is independent of the usual axioms of set theory. They exist under the Continuum Hypothesis and many other axioms. However, Shelah showed that consistently there are no P-points (his proof can be now found in
\cite{shelppoint}). Quite recently Chodounsk\'y
and Guzm\'an \cite{chod}
showed that there are no P-points in the classical Silver model. It is still unknown if P-points exist in the classical random model (see also remarks at the end of the article).

In a similar fashion to P-points one can define P-measures (note that they are usually called 'measures with additive property' in the literature): say that a (finitely additive) measure on $\omega$ is a P-measure if
every decreasing sequence of subsets of $\omega$ has a pseudointersection whose measure is equal to the limit of measures of the elements of the sequence. If a measure on $\omega$ vanishes on points, then it cannot be $\sigma$-additive but P-measures are as
close to being $\sigma$-additive as possible.

If $\mathcal{U}$ is a P-point, then the measure $\delta_\mathcal{U}$, i.e. the 0-1 measure concentrated at $\mathcal{U}$, is a P-measure. It is also known that if there is a P-point, then there is an atomless P-measure (see \cite{blassple}) but the following problem remained open.

\begin{prob}\label{main_problem} Does the existence of P-measures imply the existence of P-points?
\end{prob}

This problem was investigated e.g. in \cite{mekl}, \cite{blassple}, \cite{grebik}. In \cite{mekl} Mekler showed that in Shelah's model witnessing the lack of P-points, there are no P-measures. In the light of Chodounsk\'y-Guzm\'an theorem, it is natural to ask the following.

\begin{prob}\label{silver_question} Is there a P-measure in the Silver model?
\end{prob}

Problem \ref{silver_question} was the initial motivation for our article and we hoped that the solution would either bring us the full answer to Problem \ref{main_problem} (in case of the positive answer) or at least it would be a strong indication that
Problem \ref{main_problem} has a negative solution. Ironically, we have not been able to solve Problem \ref{silver_question} but we have solved Problem \ref{main_problem}.

In \cite{p_measures_random} it is proved that every P-point can be extended to a P-measure in the random extensions. The first author believed that
this theorem can be used to prove that there is a P-point in the classical random model, if one manages to answer Problem \ref{main_problem} in positive. To his surprise, the second author used it to answer Problem \ref{main_problem} in
negative.

\begin{thm}\label{mmmain} It is consistent with $\mathsf{ZFC}$ that there is a P-measure but there is no P-point.
\end{thm}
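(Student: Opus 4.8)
The plan is to combine two facts about random forcing: that it turns a ground-model P-point into an (atomless) P-measure, and that over a sufficiently thin ground model it cannot sustain any P-point at all. Concretely, I would start from a ground model $V$ that is a \emph{unique P-point model}, i.e.\ a model containing a P-point $\mathcal{U}$ such that every P-point of $V$ is Rudin--Keisler isomorphic to $\mathcal{U}$. Such a model is obtained by a countable support iteration over a model of $\mathsf{CH}$ which preserves a fixed (say selective) $\mathcal{U}$ while diagonalising against every other potential P-point, the iterands being proper and $\omega^\omega$-bounding so that this thinness is inherited. I would then force with the measure algebra $\mathbb{B}$ to add random reals, fix a generic $G$, and work in $V[G]$.

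For the positive half, that $V[G]$ carries a P-measure, I would invoke directly the theorem of \cite{p_measures_random}: since $\mathcal{U}$ is a P-point of $V$, it extends in $V[G]$ to a P-measure $\mu$. The random real splits every infinite set of $V$, so $\mathcal{U}$ no longer generates an ultrafilter and $\mu$ is atomless; either way $V[G]\models$ ``there is a P-measure.''

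The negative half, that $V[G]$ has no P-points, is the heart of the argument. Crucially, one cannot appeal to any blanket ``random forcing destroys P-points'' principle, since whether P-points survive in the \emph{classical} random model (randoms over a model of $\mathsf{CH}$) is exactly the open problem flagged in the introduction; the entire purpose of passing to a unique P-point ground model is to remove this freedom. I would argue by contradiction: assume $\mathcal{V}\in V[G]$ is a P-point. Using that $\mathbb{B}$ is $\omega^\omega$-bounding, every decreasing sequence drawn from $\mathcal{V}$ and every function witnessing the P-point property can be captured by ground-model data, and I would exploit this to \emph{reflect} $\mathcal{V}$ to a P-point of $V$, namely the trace $\mathcal{W}=\mathcal{V}\cap V$. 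By the uniqueness property of $V$, such a $\mathcal{W}$ must be Rudin--Keisler isomorphic to $\mathcal{U}$. The contradiction then comes from the destruction of $\mathcal{U}$ by the splitting (random) real: no ground-model ultrafilter reflecting correctly into $V$ can be refined to a P-point of $V[G]$, so $\mathcal{V}$ cannot exist and $V[G]$ has no P-points.

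The main obstacle is precisely this reflection step: showing that a P-point cannot be \emph{manufactured} from the random reals and that any hypothetical $\mathcal{V}$ descends to a genuine P-point of $V$. This is where the $\omega^\omega$-bounding of $\mathbb{B}$ and the special combinatorial features of the unique P-point model must be combined, and it must be reconciled with the apparently opposite phenomenon driving the positive half, that $\mathcal{U}$ is simultaneously destroyed as a P-point yet survives as a P-measure. I expect the delicate bookkeeping of which ground-model combinatorics $\mathbb{B}$ preserves, splitting versus bounding, to be the technically demanding part.
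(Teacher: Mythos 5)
Your positive half is exactly the paper's: preserve a selective ultrafilter $\mathcal{U}$ into the ground model of the random forcing and invoke Theorem \ref{p_measures_random}. The negative half, however, has a genuine gap at the step you yourself flag as the ``main obstacle,'' and the gap is not merely technical. You propose to reflect a hypothetical P-point $\mathcal{V}\in V[G]$ to the trace $\mathcal{W}=\mathcal{V}\cap V$ and then treat $\mathcal{W}$ as \emph{a P-point of} $V$, to which the uniqueness property of the ground model applies. But $\mathcal{W}$ is a set constructed in $V[G]$; while it is an ultrafilter on $\mathcal{P}(\omega)^V$, there is no reason whatsoever for $\mathcal{W}$ to be an \emph{element} of $V$. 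The $\omega^\omega$-bounding property of $\mathbb{B}_\kappa$ gives you only the conditional statement: \emph{if} the trace lies in $V$, then it is a P-point there (this is the argument of Proposition \ref{P-needed}); it does not put the trace in $V$. Whether every P-point of a random extension extends a ground-model P-point is precisely Problem \ref{random_question} of the paper, which is open --- so your reflection step is not a demanding lemma to be checked but an unproved (and possibly false) principle, and the uniqueness property of $V$, which quantifies only over P-points \emph{belonging to} $V$, never gets to act on $\mathcal{W}$.

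What the ground model can actually see is $\dot{\mathcal{V}}[p]=\{A\subseteq\omega\colon p\Vdash A\in\dot{\mathcal{V}}\}$, and this is only a filter --- by Corollary \ref{nonme} and Proposition \ref{P-hered}-type arguments, a non-meager P-\emph{filter}, never an ultrafilter --- so a model in which P-points are unique up to Rudin--Keisler isomorphism is simply not thin enough to rule it out. This is exactly why the paper needs two additional ingredients. First, an upgrade of Kunen's Theorem \ref{selective} to Theorem \ref{not_coherent}: the trace filter $\dot{\mathcal{V}}[p]$ of any name for a P-point is not even \emph{nearly coherent} with the selective ultrafilter $\mathcal{U}$. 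Second, the ground model is not built by diagonalising against P-points of $V$ and then forcing with randoms afterwards; instead, a $\diamondsuit(S)$-guided countable support iteration applies Shelah's forcing $SP^*(\dot{\mathcal{F}})$ to (names for) all non-meager P-filters not nearly coherent with $\mathcal{U}$, and Lemma \ref{preservation_destruction}(2) guarantees that the destruction of each such filter survives \emph{any further proper $\omega^\omega$-bounding forcing} --- which is what allows the random forcing, composed with the tail of the iteration, to be anticipated in advance. Without this anticipation built into the iteration, doing the thinning first and the random forcing second, as in your plan, leaves the manufactured-P-point possibility open and the argument cannot close.
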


The main idea behind the proof is the following. A well-known theorem of Kunen says that no selective ultrafilter can be extended to a P-point in any random extension. Shelah proved that there is a model where all P-points are selective. The
model witnessing the negative answer to Problem \ref{main_problem} is the random extension of a model obtained similarly to Shelah's model. There are P-measures there because of the theorem mentioned in the previous paragraph. The proof that
there are no P-points required an upgrade of both Kunen's theorem and Shelah's construction.

Theorem \ref{mmmain} made Problem \ref{silver_question} slightly less compelling. However, we think that the question about the existence of P-measures in the Silver model is still interesting (and it is still open). Moreover, considerations around Problem \ref{silver_question} led us to some interesting notions in the theory of
measures on $\omega$. 

The natural approach toward a solution of Problem \ref{silver_question} is to analyze which filters from the ground model can be extended to a P-measure in the Silver extension, i.e. for which filters $\mathcal{F}$ there is a measure $\mu$ in the
extension such that $\mu(F)=1$ for each $F\in \mathcal{F}$.  E.g. it is not hard to see that every such filter would have to be a P-filter. In fact, for technical reasons, instead of extensions by a single Silver forcing, we will consider extensions by $\omega$ product of Silver forcings (see also the comments at the end of the introduction). In what follows, by $\SSi$ we will denote the Silver forcing, and by $\SSi_\omega$ the $\omega$ product of Silver forcings.

Our first achievement in this direction was the following.

\begin{thm} No rapid filter can be extended to a P-measure in the extension by the $\omega$ product of Silver forcings.
\end{thm}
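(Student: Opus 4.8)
The plan is to argue by contradiction. Fix a rapid filter $\F$ in the ground model and suppose that some condition forces $\dot\mu$ to be a P-measure on $\om$ with $\dot\mu(\check F)=1$ for every $F\in\F$. I would use the standard reformulation of the P-measure property: if $\mu$ is a P-measure and $\om=\bigsqcup_n C_n$ is a partition into $\mu$-null sets, then, applying the defining property to the decreasing sequence $A_n=\bigcup_{i\ge n}C_i$ (which has $\mu(A_n)=1$), there must exist a set $A$ with $\mu(A)=1$ meeting every $C_n$ in a finite set. The goal is therefore to manufacture, from the Silver generics, a partition of $\om$ into $\mu$-null pieces that admits no such almost-transversal of positive measure; this contradicts the P-measure property and proves the theorem.

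First I would exploit rapidity to fix the combinatorial scale. Since $\F$ is rapid, and $\SSi_\omega$ (the $\om$ product of Silver forcings) is $\om^\om$-bounding, the generated filter stays rapid in the extension; hence for any ground-model interval partition $\{I_k\}$ with $|I_k|\to\infty$ there is $A\in\F$ meeting each $I_k$ in at most one point. As $A\in\F$ we have $\mu(A)=1$, so the whole measure concentrates on a thin set, and we may work inside $A$ reindexed as $\om$. More importantly, rapidity lets me thin out adaptively during the construction: whenever a fusion step prescribes a speed $f\in\om^\om$, I can choose the next $\F$-set to be $f$-thin while keeping it of measure one.

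The heart of the argument is a fusion construction in $\SSi_\omega$. Using the fusion/Sacks structure of the $\om$ product of Silver forcings, I would build a fusion sequence of conditions deciding the values of $\dot\mu$, up to a shrinking error, on a tree of sets indexed by finite patterns on the free coordinates, while simultaneously reserving infinitely many free coordinates in each of a sequence of consecutive blocks $E_0,E_1,\dots$. Recall that genericity on the free coordinates of a Silver condition is complete: any $0/1$ pattern on finitely many free coordinates is attainable by a direct extension. This independence is what drives the splitting of the measure. The pieces $C_n$ of the partition are read off from the Silver generic on $E_n$, intersected with the thin $\F$-set supplied at that stage, and arranged so that the decided values of $\dot\mu$ force each $C_n$ to be null, while any set meeting all the $C_n$ finitely is steered, by the genericity on the reserved coordinates, onto a region the fusion has already forced to have measure $0$. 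Feeding this partition into the reformulation above yields the contradiction.

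The main obstacle is the balancing act inside the fusion: I must decide enough of the name $\dot\mu$ to guarantee both that the generically defined pieces $C_n$ are null and that every almost-transversal is null, yet leave enough genuinely free Silver coordinates in each block for the independence argument to run, and keep the pieces thin enough (via rapidity) that they partition a measure-one set. Reconciling these three demands — decision of $\dot\mu$, preserved freedom, and rapid thinning — within the particular fusion calculus of $\SSi_\omega$ is where the real work lies; it is also the reason the $\om$ product, rather than a single Silver forcing, is used, since it supplies the extra independent generic coordinates needed to push every almost-transversal into the null region.
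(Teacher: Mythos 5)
Your proposal contains a genuine gap: everything that would constitute the proof is deferred to the fusion construction of your fourth paragraph, which you do not carry out, and which as described cannot be carried out. The fatal point is the step ``arranged so that the decided values of $\dot\mu$ force each $C_n$ to be null''. The sets $C_n$ you want are defined from the generic, and $\dot\mu$ is an arbitrary name for a P-measure extending $\F$; whether such generically defined sets get small measure is a property of the name, not something a fusion (or any strengthening of conditions) can arrange. The paper faces exactly this issue and resolves it with a dichotomy, not a construction. Working with the sets $D_0(\dot s_k)$, $D_1(\dot s_k)$ (unions of the even-, respectively odd-indexed intervals of the interval partition $\I(\dot s_k)$ induced by the $k$-th generic real --- not the sets $\dot s_k^{-1}[i]$): either some condition forces that for every $\varepsilon>0$ and every $n$ there is $m>n$ with $\mu\big(\bigcap_{n\leq k<m} D_0(\dot s_k)\big)<\varepsilon$, or some condition forces these intersections to stay above a fixed $\varepsilon>0$. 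Your template --- a partition into null pieces admitting no measure-one almost-transversal --- is essentially the first horn; below a condition realizing the second horn your pieces simply are not null, and your plan says nothing about what to do then. (The paper's Case 2 uses a different violation of P-measureness: a decreasing sequence of sets of measure $\geq\varepsilon$, all of whose pseudointersections get forced to be disjoint from a set in $\F$, hence null.)

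Moreover, no fusion is needed anywhere, which is why the ``balancing act'' you worry about never has to be performed. In each case the paper makes a single extension of the given condition: $\omega^\omega$-bounding produces ground-model functions $h$ (the block structure) and $g$ (a bound for the pseudounion in Case 1, resp. the pseudointersection in Case 2, supplied by the assumed P-measure property), rapidity applied in the ground model gives an increasing $f>g$ with $f[\omega]\in\F$, and then one condition $q\leq p$ steers every point $f(n)$ into $\bigcap_{k\in[h(n),h(n+1))}D_0(\dot s_k)$ (Case 1), respectively out of $D_0(\dot s_n)$ (Case 2). The mechanism making this possible --- and the reason the interval-partition coding of the generic is used instead of reading sets off the generic as a characteristic function, as you propose --- is that $f(n)>g(n)>\min\Cod(p_k)$ guarantees a free coordinate of $p_k$ below $f(n)$, and choosing the parity of the number of $1$'s placed on the free coordinates below $f(n)$ decides at will whether $f(n)$ lands in $D_0(\dot s_k)$ or in $D_1(\dot s_k)$. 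This forces a set from $\F$ inside the pseudounion (so its measure is $1$, though the P-measure property said it is $<1/2$), respectively a set from $\F$ disjoint from the pseudointersection (so its measure is $0$, though it should be $\geq\varepsilon$). Your opening reformulation of P-measureness and your remark that rapidity survives $\omega^\omega$-bounding extensions are both correct, but the dichotomy and this parity-steering argument are the proof, and they are absent from your proposal.
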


One can also analyze which measures from the ground model can be extended to a P-measure in the extension (again such measures would also have to be P-measures). 
As we have already pointed out, the basic examples of measures are those concentrated on an ultrafilter. Ultrafilters can be used to define also non-atomic measures, so-called
ultrafilter densities (see Section \ref{measures-on-omega} for the precise definitions). We prove that no such measure can be extended to a P-measure in the model $V^{\SSi_\omega}$. 

\begin{thm}\label{Rudy_Blass} If there is an ultrafilter $\mathcal{U}$ such that $\delta_\mathcal{U}$ is Rudin-Blass below a measure $\mu$, then $\mu$ cannot be extended to a P-measure in the model obtained by $\omega_2$ iterations of $\SSi_\omega$. In particular, no ultrafilter and no ultrafilter density can be extended to a P-measure in this model. 
\end{thm}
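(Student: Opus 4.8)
The plan is to reduce the general statement to the single case $\mu=\delta_{\mathcal{U}}$ by pushing measures forward, and then to defeat any putative extension of $\delta_{\mathcal{U}}$ directly, using the fusion (Sacks property) of the iteration rather than by extracting a P-point. First I would record the \emph{pushforward reduction}: if $f$ is finite-to-one with $f_{*}\mu=\delta_{\mathcal{U}}$ and $\bar\mu$ is a P-measure extending $\mu$ in the final model $W$, then $\nu:=f_{*}\bar\mu$ is again a P-measure extending $\delta_{\mathcal{U}}$. The only point needing checking is that $f_{*}$ preserves P-measures: given a decreasing sequence $(B_{k})$, pull back to the decreasing sequence $(f^{-1}[B_{k}])$, let $C$ be a pseudointersection with $\bar\mu(C)=\lim_{k}\bar\mu(f^{-1}[B_{k}])$ furnished by $\bar\mu$, and push it forward to $D:=f[C]$. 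Since $f$ is finite-to-one one checks $D\subseteq^{*}B_{k}$ for all $k$ and $C\subseteq f^{-1}[D]\subseteq^{*}f^{-1}[B_{k}]$, so $\nu(D)=\bar\mu(f^{-1}[D])$ is sandwiched to equal $\lim_{k}\nu(B_{k})$. Thus it suffices to show that no ground-model ultrafilter $\mathcal{U}$ has $\delta_{\mathcal{U}}$ extended to a P-measure in $W$; the ``in particular'' clause then follows, the ultrafilter case being $f=\mathrm{id}$ and the ultrafilter-density case being the finite-to-one reduction supplied in Section~\ref{measures-on-omega}.

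Suppose then that $\bar\mu$ extends $\delta_{\mathcal{U}}$ in $W$. Since $\mathcal{U}$ is an ultrafilter in $V$, $\bar\mu$ is $0$--$1$ on all $V$-sets, and the filter $\mathcal{D}:=\{D:\bar\mu(D)=1\}$ is a P-filter (a decreasing sequence in $\mathcal{D}$ has $\bar\mu$-limit $1$, so by the P-measure property it has a pseudointersection of measure $1$, again in $\mathcal{D}$) whose trace $\mathcal{D}\cap V$ is exactly $\mathcal{U}$. I stress that one \emph{cannot} finish by producing a P-point: the extension may take intermediate values on the new reals, as indeed happens in the random extension, where by \cite{p_measures_random} every P-point extends to a (non $0$--$1$) P-measure. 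The argument must therefore use the specific combinatorics of Silver forcing. The target I would aim for is a $\subseteq^{*}$-decreasing sequence $(C_{n})$ \emph{inside} $\mathcal{D}$ all of whose pseudointersections have $\bar\mu$-measure at most some fixed $c<1$. As each $C_{n}\in\mathcal{D}$ has $\bar\mu(C_{n})=1$, while every pseudointersection has measure $\le c$, this flatly contradicts that $\bar\mu$ is a P-measure. The entire weight of the theorem thus rests on forcing the measure of pseudointersections to \emph{drop}, and the random-model remark shows that this drop is impossible without Silver-specific input.

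To build such a sequence I would exploit that each $\SSi_{\omega}$-generic splits every ground-model member of $\mathcal{U}$, so that one can thin the $C_{n}$ along a ground-model slalom partition: using the Sacks property of the countable-support $\omega_{2}$-iteration of $\SSi_{\omega}$, any name for a pseudointersection is confined, modulo a finite set, to pick only slalom-boundedly many points in each block, whence — because $\mathcal{U}$ is a \emph{ground-model} ultrafilter on the blocks — the confined set corresponds to a $V$-set outside $\mathcal{U}$ (of $\bar\mu$-measure $0$) together with generic noise of measure $<1$. The delicate step, and the one I expect to be the main obstacle, is \emph{uniform preservation through the tail}: the pseudointersection demanded by the P-measure property may be added at an arbitrarily late stage $\alpha<\omega_{2}$, so a single fusion must simultaneously defeat all of them, keeping the bound $c$ uniform along the whole iteration. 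This is precisely where the preservation of the Sacks property under countable-support iteration of Silver products is essential, and it is the technical heart of the proof, strengthening both the rapid-filter theorem above and the no-P-point argument of Chodounsk\'y--Guzm\'an \cite{chod}. Granting this preservation lemma, $\lim_{n}\bar\mu(C_{n})=1$ while every pseudointersection has measure at most $c<1$, which contradicts the P-measure property and completes the proof.
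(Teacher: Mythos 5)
Your opening reduction is correct and is exactly the paper's: Fact \ref{rk-is-rb} together with Proposition \ref{P-hered} (finite-to-one pushforwards preserve P-measures) reduce the theorem to showing that $\delta_{\mathcal{U}}$ itself has no P-measure extension, and the ultrafilter-density case is Proposition \ref{ult-extend-meas-nd}; this is how the paper packages things in Corollary \ref{nonuni}. The gap is everything after that: the core claim --- that no P-measure in the iterated-Silver model extends $\delta_{\mathcal{U}}$ --- is never proved. You correctly name the target (a decreasing sequence whose pseudointersections are all forced to have measure strictly below the limit of the measures), but the construction of that sequence and of the conditions defeating a name $\dot{Z}$ for a pseudointersection is replaced by an appeal to a ``preservation lemma'' that you explicitly grant; that lemma \emph{is} the theorem. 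The slalom/Sacks-property sketch (``the confined set corresponds to a $V$-set outside $\mathcal{U}$ plus generic noise'') is never developed to the point where one could check how any condition forces the measure of $\dot{Z}$ to drop, so the proposal cannot be credited with the main step.

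Moreover, you mislocate the difficulty: no ``uniform fusion through the tail'' and no Sacks-property machinery beyond $\omega^\omega$-bounding is needed. In the paper (Theorem \ref{Nearly-no}, whose proof transfers to the whole $\omega_2$-iteration because all the combinatorics happen on the first coordinate and in $V$), one works below an arbitrary condition $q$ of the full iteration. The decreasing sequence $(\dot{Z}_n)$ is built from the \emph{stage-zero} $\SSi_\omega$-generics via the sets $\dot{D}_i(\dot{s}_k)$, split into two cases according to the behaviour of $\bar\mu$ on blockwise unions of the $\dot{D}_0(\dot{s}_k)$; a name $\dot{Z}$ for a pseudointersection, at whatever stage of the iteration it appears, is tamed because the whole iteration is $\omega^\omega$-bounding over $V$, so the interleaving function $f$ with $\dot{Z}\setminus f(n)\subseteq \dot{Z}_n$ can be taken in $V$; and then one builds a \emph{single} condition $r\leq q$, differing from $q$ only in its first coordinate, which forces $\dot{Z}$ to be disjoint from a ground-model set of large $\mu$-measure, e.g. forcing $\bar\mu(\dot{Z})\leq 1/4$ against $\bar\mu(\dot{Z})\geq 1/2$. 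Two ingredients make this trap possible, and both are absent from your outline: (i) by Proposition \ref{P-needed} (again via $\omega^\omega$-bounding), the contradiction hypothesis already forces $\delta_{\mathcal{U}}$ to be a P-measure in $V$, i.e. $\mathcal{U}$ is a P-point, and this is what yields a trap of the form $\bigcup_n A_n\cap[i(n),i(n+1))\in\mathcal{U}$ with $i\in V$ along an interval partition; (ii) Lemma \ref{strongly-atomless} (trivial for a Dirac measure: every Silver condition extends to one with $D_1(q')\in\mathcal{U}$) lets one shrink each Silver coordinate and then seal the finite pieces $F_n=A_n\cap[i(n),i(n+1))$ into the stems, so that $r\Vdash F_n\cap\dot{Z}_n=\emptyset$ and hence $r\Vdash \bigcup_n F_n\cap\dot{Z}=\emptyset$. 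Without (i) and (ii), or a worked-out substitute for them, your outline does not produce the measure drop it promises, and the uniformity-over-stages problem you flag as the ``technical heart'' simply does not arise in the correct argument.
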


When we proved the above theorem, in fact, we did not know any example of a P-measure (under $\mathsf{CH}$) that does not satisfy its assumptions, so we did not have any candidate for a which can be extended to a P-measure in the $V^{\SSi_\omega}$
Then, we managed to show that under $\mathsf{CH}$ there is a P-measure $\lambda$ which is in a sense the Lebesgue measure on $2^\omega$ in disguise. This measure is basically different from all the examples of P-measures known before: it is non-atomic but its Maharam type is countable. Recall that the Maharam type of a measure $\mu$ on $\omega$ is the density of the pseudo-metric on $\mathcal{P}(\omega)$ defined by $d_\mu(A, B) = \mu(A \triangle B)$. 

Additionally, we constructed $\lambda$ in a way that it does not satisfy the assumption of  Theorem \ref{Rudy_Blass}, so it looked like a promising candidate for a measure that can be extended to a P-measure in the model $V^{\SSi_\omega}$. However, we
managed to upgrade the above theorem (to Theorem \ref{Nearly-no}) so that as a corollary we get the following.

\begin{thm}\label{mahmah} In the model obtained by the countable support $\omega_2$-iteration of countable products of the Silver forcings there are no P-measures of countable Maharam type.
\end{thm}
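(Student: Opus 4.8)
The plan is to deduce Theorem \ref{mahmah} from the upgraded non-extendability result Theorem \ref{Nearly-no} by reflecting a putative P-measure down to an intermediate stage of the iteration. Suppose, towards a contradiction, that in the final model $V[G_{\omega_2}]$ there is a P-measure $\mu$ of countable Maharam type, and fix a countable subalgebra $\mathcal{A} \subseteq \mathcal{P}(\omega)$ that is dense in the pseudometric $d_\mu$. Since the countable support $\omega_2$-iteration of countable products of Silver forcings is proper and, under $\mathsf{CH}$ in the ground model, satisfies the $\aleph_2$-chain condition, every countable set of reals in $V[G_{\omega_2}]$ appears at a proper initial stage. First I would use this to fix $\alpha < \omega_2$ for which $\mathcal{A}$, together with the countably many values $\mu(a)$ and pairwise distances $d_\mu(a,a')$ for $a,a' \in \mathcal{A}$, already lies in $V[G_\alpha]$.

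Next I would argue that this reflected data pins down a ground-model measure that $\mu$ extends. The key point is that a measure of countable Maharam type is determined by any $d_\mu$-dense subalgebra: for every $B \in \mathcal{P}(\omega)^{V[G_\alpha]}$ the value $\mu(B)$ is the limit of $\mu(a_n)$ along any sequence $a_n \in \mathcal{A}$ with $d_\mu(a_n,B) \to 0$, and the P-measure property supplies canonical pseudointersections that realise such limits. The goal is to show that the restriction $\mu_\alpha := \mu \restriction \mathcal{P}(\omega)^{V[G_\alpha]}$ is itself an element of $V[G_\alpha]$; granting this, $\mu_\alpha$ is a non-atomic measure of countable Maharam type in $V[G_\alpha]$, and $\mu$ is a genuine extension of $\mu_\alpha$ to a P-measure in $V[G_{\omega_2}]$.

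The final step exploits the structure of the iteration. By tail-absorption for countable support iterations, the quotient forcing from stage $\alpha$ to $\omega_2$ is, over $V[G_\alpha]$, again a countable support $\omega_2$-iteration of countable products of Silver forcings, so that $V[G_{\omega_2}]$ is exactly the corresponding $\SSi_\omega$-iteration extension of $V[G_\alpha]$. Every non-atomic measure of countable Maharam type satisfies the hypothesis of Theorem \ref{Nearly-no} -- this is precisely where the upgrade over Theorem \ref{Rudy_Blass} is needed, since $\mu_\alpha$ need carry no $\delta_{\mathcal{U}}$ Rudin-Blass below it (indeed the Lebesgue-like measure $\lambda$ is of this kind). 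Applying Theorem \ref{Nearly-no} inside $V[G_\alpha]$, we conclude that $\mu_\alpha$ cannot be extended to a P-measure in $V[G_{\omega_2}]$, contradicting the existence of $\mu$. This contradiction yields the theorem.

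The step I expect to be the main obstacle is showing $\mu_\alpha \in V[G_\alpha]$, that is, reflecting the measure itself and not merely its countable type-data. The difficulty is that locating, for a given $B \in V[G_\alpha]$, a $d_\mu$-approximating sequence from $\mathcal{A}$ a priori invokes the full measure $\mu$ from $V[G_{\omega_2}]$; one must verify that this approximation, and hence the value $\mu(B)$, can be recovered inside $V[G_\alpha]$ from the reflected data together with the canonical pseudointersections furnished by the P-measure property. A secondary point is to check that atoms of $\mu_\alpha$ cause no trouble -- they are already covered by Theorem \ref{Rudy_Blass} via the $0$--$1$ measures they support -- so that the non-atomic hypothesis of Theorem \ref{Nearly-no} may be invoked for the non-atomic part.
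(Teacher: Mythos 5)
Your high-level plan --- reflect the measure to an intermediate stage $V[G_\alpha]$, argue that countable Maharam type places the reflected measure under the hypothesis of the non-extendability theorem, and then apply that theorem over $V[G_\alpha]$ --- is exactly the paper's proof, which consists of citing Proposition \ref{extending}, Theorem \ref{Maharam} and Theorem \ref{Nearly-no}. The genuine gap is the step you yourself flag as the main obstacle and never close: showing that $\mu_\alpha=\mu\upharpoonright\mathcal{P}(\omega)^{V[G_\alpha]}$ belongs to $V[G_\alpha]$. Your proposed mechanism --- reflect a countable $d_\mu$-dense subalgebra $\mathcal{A}$ together with its measure values and recover $\mu_\alpha$ from that data --- cannot work, because a measure of countable Maharam type is \emph{not} determined by its values on a $d_\mu$-dense subalgebra. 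Concretely, for any ultrafilter $\mathcal{U}$ the algebra of finite and cofinite sets is $d_{\delta_{\mathcal{U}}}$-dense (every $B$ is at distance $0$ from $\emptyset$ or from $\omega$), Dirac deltas have countable Maharam type, and two Dirac deltas at different ultrafilters agree on this algebra while disagreeing on any set separating the ultrafilters; taking both ultrafilters to be P-points shows the P-measure property cannot break this ambiguity either. So the circularity you point out (identifying approximating sequences for $B$ already requires evaluating $\mu$ on sets involving $B$) is not a technical nuisance but fatal to this route. The paper's Proposition \ref{extending} does the reflection by a different argument that needs no Maharam-type hypothesis at all: for $\mathcal{M}\prec H(\theta)$ of size $\omega_1$ closed under countable sequences and $\alpha=\sup(\omega_2\cap\mathcal{M})$, properness plus the fact that each iterand has a dense set of size $\omega_1$ (CH being preserved along the iteration) shows that densely many conditions decide a countable name for any given subset of $\omega$ together with a countable name for its $\dot\mu$-value, and these local decisions are amalgamated into a single $\mathbb{P}_\alpha$-name $\dot\mu_\alpha$ forced to equal $\dot\mu\cap V[G_\alpha]$.

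Two further corrections. First, your assertion that every non-atomic measure of countable Maharam type ``satisfies the hypothesis of Theorem \ref{Nearly-no}'' is not free: it is precisely the paper's Theorem \ref{Maharam} (in contrapositive form), proved by a nontrivial $\omega_1$-length construction of interval partitions and witnessing sets, and what it actually yields is that such a measure is \emph{nearly} silver, so the result to invoke is Corollary \ref{nonuni} rather than Theorem \ref{Nearly-no} itself. Second, neither Theorem \ref{Maharam} nor Theorem \ref{Nearly-no} carries a non-atomicity hypothesis (Dirac deltas are silver), so your closing atomic/non-atomic decomposition is unnecessary --- and also unjustified as stated, since a P-measure extension of $\mu_\alpha$ need not induce P-measure extensions of its atomic and non-atomic parts. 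With Proposition \ref{extending}, Theorem \ref{Maharam} and Corollary \ref{nonuni} substituted for the corresponding steps, your argument becomes the paper's.
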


Since P-points are in a sense measures of Maharam type $2$, the above theorem can be seen as a generalization of the Chodounsk\'y-Guzm\'an theorem. It also says that $\lambda$, as a measure of countable Maharam type, cannot be extended to a P-measure in
the Silver extension.

Finally, we are again in a situation in which we do not know any example of a measure that has a chance to be extended to a P-measure in the model $V^{\SSi_\omega}$.  It might mean that there are no P-measures in the model obtained by $\omega_2$ iteration of $\SSi_\omega$. Or there are still some interesting P-measures to discover in the models satisfying $\mathsf{CH}$.

We will comment on the model which appeared in Theorem \ref{mahmah}. This is not the classical Silver model (that is obtained by the countable support iteration of $\omega_2$ Silver forcings). However, as can be seen e.g. in \cite{chod}, the $\omega$
product of Silver forcings is much easier to work with, in this context, than the single Silver forcing. On the other hand, if we want to consider the uncountable product of Silver forcings, then we will face problems trying to deal with the
captured names for objects from the final model at the intermediate steps. So, it seems that the 'mixture' of iteration and products is the most convenient working environment here. We conjecture that those problems are purely technical and that the assertion of Theorem \ref{mahmah}
is true as well for the classical Silver model and the model obtained by the side-by-side product of Silver models. There is a general and somewhat vague question of how different are those three versions of the Silver model and it seems that not much is known about
the answer.

\tableofcontents

\section{P-points and P-measures}

We start by recalling the basic definitions.
    
A {\it filter} over a set $X$ is a family of subsets of $X$ closed under intersections and supersets. We will consider only proper filters, i.e. those which do not contain $\emptyset$. An {\it ultrafilter} is a $\subseteq$-maximal proper filter or,  equivalently, a filter such that for every $A \subseteq X$ either it contains $A$ or it contains $A^c$.
An ultrafilter is called {\it non-principal} if it does not contain any singleton (and thus any finite set). 

A {\it finitely additive measure} on $X$ is a function $\mu\colon\mathcal{P}(X)\to \mathbb{R}_{\ge0}$ such that 
    \begin{enumerate}
        \item $\mu(\emptyset)=0$,
        \item for $A\cap B=\emptyset$ we have $\mu(A\cup B)=\mu(A)+\mu(B)$.
    \end{enumerate}
    We say that a measure {\it vanishes on points} if no singleton has a positive measure (and so all finite sets have measure 0).
    A {\it probability} measure is a measure with $\mu(X)=1$.
    
    A probability measure is said to be atomless if for any $\eps>0$ the set $X$ can be divided into finitely many sets of measure not bigger than $\eps$.

Further on, in this paper all ultrafilters will be non-principal ultrafilters on $\omega$ and all measures will be vanishing on points, finitely additive probability measures on $\omega$.

We say that $A$ is {\it almost} contained in $B$ (denoted $A\subseteq^*B$) if $A\setminus B$ is finite. For a family $\F$ we say that $Z$ is a {\it pseudointersection} of $\F$ if $Z\subseteq^* F$ for each $F\in \mathcal{F}$. 
Notice that $Z$ is a pseudointersection of a countable family $\{F_n\colon n\in\omega\} \subseteq \mathcal{P}(\omega)$ if and only if there is a function  $f\colon \omega\to \omega$ such that $Z\setminus f(n)\subseteq F_n$ for each $n$.

Finally, the following two notions are of main interest. An ultrafilter $\mathcal{U}$ is called a \emph{P-point} if for every decreasing sequence 
$\{ U_n\colon n\in\omega\}$ of elements of $\mathcal{U}$ there is $Z\in \mathcal{U}$ which is a pseudointersection of $\{
	U_n\colon n\in\omega\}$.
    A measure $\mu$ is called a \textit{P-measure} if for every decreasing sequence $\{A_n\colon n\in\omega\}$ there is $Z$, a pseudointersection of $\{A_n\colon n\in\omega\}$, with $\mu(Z)=\inf \mu(A_n)$ ($=\lim \mu(A_n) $). Such measures are also known as
	measures with the {\it additive property}(*) or AP-measures.

We will formulate now two simple facts concerning P-measures leaving the proofs for the reader.

\begin{prop}
\label{pmeasuredefs}
    The following are equivalent:
        \begin{enumerate}
            \item $\mu$ is a P-measure,
            \item for every increasing sequence $(B_n)$ there is a set $Z$ with $\mu(Z)=\sup_n(B_n)$ and $Z\supseteq^* B_n$ for each $n$,
            \item for every pairwise disjoint family $(C_n)$ there is $Z$ st. $\mu(Z)=\sum_n\mu(C_n)$ and $Z\supseteq^* C_n$ for each $n$.
        \end{enumerate}
\end{prop}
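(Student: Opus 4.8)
The plan is to establish $(1)\Leftrightarrow(2)$ by passing to complements and $(2)\Leftrightarrow(3)$ by a telescoping decomposition. Each equivalence is witnessed by an explicit, invertible correspondence between the relevant sequences, so the two directions of each are obtained by reading the same construction forwards and backwards; no genuine cyclic argument is needed.

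For $(1)\Leftrightarrow(2)$ the key observation is that complementation turns decreasing sequences into increasing ones and reverses almost-inclusion: for any $Z,A\subseteq\omega$ one has $Z\subseteq^*A$ if and only if $\omega\setminus Z\supseteq^*\omega\setminus A$, since both assertions say that the set $Z\setminus A=(\omega\setminus A)\setminus(\omega\setminus Z)$ is finite. Moreover $\mu(\omega\setminus Z)=1-\mu(Z)$ because $\mu$ is a probability measure. So, given a decreasing sequence $(A_n)$ as in (1), I would set $B_n=\omega\setminus A_n$, note that $(B_n)$ is increasing with $\sup_n\mu(B_n)=1-\inf_n\mu(A_n)$, and check that $Z$ is a pseudointersection of $(A_n)$ with $\mu(Z)=\inf_n\mu(A_n)$ exactly when $W=\omega\setminus Z$ satisfies $W\supseteq^*B_n$ for all $n$ and $\mu(W)=\sup_n\mu(B_n)$. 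The converse direction is identical, starting from an increasing $(B_n)$ and setting $A_n=\omega\setminus B_n$.

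For $(2)\Leftrightarrow(3)$ I would use the standard correspondence between increasing sequences and pairwise disjoint families. Given an increasing $(B_n)$, put $C_0=B_0$ and $C_n=B_n\setminus B_{n-1}$; then $(C_n)$ is pairwise disjoint, $B_n=\bigcup_{i\le n}C_i$, and finite additivity gives $\mu(B_n)=\sum_{i\le n}\mu(C_i)$, so that $\sup_n\mu(B_n)=\sum_n\mu(C_n)$ (the partial sums are bounded by $\mu(\omega)=1$, hence the series converges). Conversely, from a pairwise disjoint $(C_n)$ I would set $B_n=\bigcup_{i\le n}C_i$. To transfer the pseudointersection between the two formulations I would use that a finite union of almost-inclusions is again an almost-inclusion: since each $B_n$ is a finite union of the $C_i$, one has $Z\supseteq^*C_i$ for all $i\le n$ if and only if $Z\supseteq^*B_n$. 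Matching this with the equality $\mu(Z)=\sup_n\mu(B_n)=\sum_n\mu(C_n)$ yields each implication.

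The only point demanding care -- and the closest thing to an obstacle -- is to keep the real-number suprema and series cleanly separated from the set-theoretic unions. A finitely additive $\mu$ need not be continuous from below, so $\sup_n\mu(B_n)$ is in general strictly smaller than $\mu(\bigcup_n B_n)$; the whole content of the proposition is that one can nonetheless find a set $Z$ almost containing every $B_n$ whose measure realizes the former value rather than the latter. Since every sequence $(\mu(B_n))$ is increasing and bounded by $1$, the relevant suprema and series exist, and the argument never needs countable additivity.
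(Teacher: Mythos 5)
Your proof is correct. The paper itself gives no argument for this proposition (it is explicitly left to the reader), and your route --- complementation with $\mu(\omega\setminus Z)=1-\mu(Z)$ for $(1)\Leftrightarrow(2)$, and disjointification $C_n=B_n\setminus B_{n-1}$ together with the fact that $Z\supseteq^* B_n$ iff $Z\supseteq^* C_i$ for all $i\le n$ for $(2)\Leftrightarrow(3)$ --- is exactly the standard argument the authors intend.
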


\begin{prop}
    A measure $\mu$ is a P-measure if and only if for any decreasing sequence $(A_n)$ there is an increasing function $f\colon\omega\to \omega$ such that 
    $$
        \mu\left(\bigcup_{n\in\omega}\left(A_n\cap [f(n),f(n+1))\,\right)\right)= \lim_{n\to \infty}\mu(A_n).
    $$
    \label{pmeasfunc}
\end{prop}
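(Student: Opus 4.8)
The plan is to isolate a single structural observation that makes both implications almost automatic: for any \emph{increasing} $f\colon\omega\to\omega$ and any decreasing sequence $(A_n)$, the set
$$
Z_f \;=\; \bigcup_{n\in\omega}\bigl(A_n\cap[f(n),f(n+1))\bigr)
$$
is already a pseudointersection of $(A_n)$. Indeed, fix $k$; for $n\ge k$ we have $A_n\subseteq A_k$, hence $A_n\cap[f(n),f(n+1))\subseteq A_k$, while for $n<k$ the corresponding pieces lie inside the finite set $f(k)=\{0,\dots,f(k)-1\}$. Therefore $Z_f\setminus f(k)\subseteq A_k$ for every $k$, which by the characterization of pseudointersections recalled above says exactly that $Z_f$ is a pseudointersection of $(A_n)$. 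Throughout write $L=\lim_n\mu(A_n)=\inf_n\mu(A_n)$, which exists because $(A_n)$ is decreasing. Since $Z_f$ is a pseudointersection, $Z_f\subseteq A_k\cup f(k)$ and finite sets are $\mu$-null, so $\mu(Z_f)\le\mu(A_k)$ for all $k$ and thus $\mu(Z_f)\le L$ automatically.

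The backward implication is then immediate. Given a decreasing $(A_n)$, the hypothesis supplies an increasing $f$ with $\mu(Z_f)=L$; by the observation $Z_f$ is a pseudointersection of $(A_n)$ with $\mu(Z_f)=\lim_n\mu(A_n)$, which is precisely the defining property of a P-measure for this sequence. As $(A_n)$ was arbitrary, $\mu$ is a P-measure.

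For the forward implication, assume $\mu$ is a P-measure and fix a decreasing sequence $(A_n)$. The P-measure property yields a pseudointersection $Z$ with $\mu(Z)=L$, and by the characterization of pseudointersections we may choose an increasing $f$ with $Z\setminus f(n)\subseteq A_n$ for every $n$. I claim this same $f$ works. For $m\in[f(n),f(n+1))$ with $m\in Z$ we have $m\ge f(n)$, so $m\in Z\setminus f(n)\subseteq A_n$; hence $Z\cap[f(n),f(n+1))\subseteq A_n\cap[f(n),f(n+1))$, and summing over $n$ gives $Z\setminus f(0)\subseteq Z_f$. Consequently $Z\subseteq^* Z_f$, so $\mu(Z)\le\mu(Z_f)$, i.e. $L\le\mu(Z_f)$; combined with the automatic inequality $\mu(Z_f)\le L$ from the first paragraph this gives $\mu(Z_f)=L$, as required. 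The only point demanding care — and the place where the two directions genuinely interact — is this last squeeze: one must produce the witnessing $f$ from the abstract pseudointersection $Z$ and then check that passing from $Z$ to the canonically shaped set $Z_f$ can only enlarge the measure, so that the easy upper bound $\mu(Z_f)\le L$ forces equality.
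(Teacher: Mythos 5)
Your proof is correct: the key observation that $Z_f=\bigcup_n\bigl(A_n\cap[f(n),f(n+1))\bigr)$ is automatically a pseudointersection with $\mu(Z_f)\le\lim_n\mu(A_n)$, plus the squeeze $Z\setminus f(0)\subseteq Z_f$ in the forward direction, is exactly the natural argument. The paper explicitly leaves this proposition's proof to the reader, so there is no authorial proof to compare against, but your write-up is precisely the intended routine verification (the only unstated step, upgrading the witnessing function to an increasing one by taking running maxima, is trivial and harmless).
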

    
Proposition \ref{pmeasuredefs} (3) justifies the statement that P-measures are, in some sense, as close to being $\sigma$-additive as a (vanishing on points) measure possibly can.
Proposition \ref{pmeasfunc} carries a valuable intuition that, for P-points and P-measures, any pseudointersection is tightly associated with a function. 

\section{Measures on \ensuremath{\omega}}\label{measures-on-omega}

Note that every (finitely additive) measure $\mu$ on $\omega$ can be seen as a measure on the clopen subsets of $\omega^*$ (as, according to our promise from the previous section, $\mu$ vanishes on points) and then extended uniquely to a Borel (countably additive) measure on $\omega^*$. So, measures on
$\omega$ can be identified with countably additive measures on $\omega^*$.

We will give now some examples of measures on $\omega$.

\begin{exa} Let $\mathcal{U}$ be an ultrafilter on $\omega$. We may produce a measure by assigning value 1 to all sets in $\mathcal{U}$ and value 0 to their complements. We will denote such measure by $\delta_\mathcal{U}$ (and call it \emph{the Dirac delta
	at} $\mathcal{U}$). Clearly, every Dirac delta is atomic.  
\end{exa}

There are many atomless measures on $\omega^*$, as $\omega^*$ is not scattered (see \cite{Rudin}). Here we present a quite general way to produce atomless measures extending the asymptotic density. 

\begin{exa}\label{example2} Let $\mu$ be a measure on $\omega$. Let $d_A\colon \omega \to [0,1]$ be defined by $d_A(n) = |A\cap n|/n$. Let 
	\[ \nu(A) = \int d_A \ d\mu. \]
Formally, the above expression does not make much sense, so let us comment on how to interpret it. First, lift $\mu$ to a Borel measure $\mu'$ on $\omega^*$. The function $d_A$ is bounded so it induces a continuous function $d'_A\colon \omega^* \to [0,1]$.
	Let $\nu'(A) = \int d'_A \ d\mu'$. This is a (countably additive) measure on $\omega^*$ and so it generates a measure $\nu$ on $\omega$. Such a measure extends the asymptotic density.\footnote{Note that it is also possible to define the Lebesgue integral directly for finitely additive measures.} 
\end{exa}

We will consider now measures like the above of quite a specific form.

\begin{exa}\label{extension-density}
	Suppose that $\mu$ is a Dirac delta concentrated at an ultrafilter $\mathcal{U}$. Let $d_A$ be defined as in Example \ref{example2}. Then 
	\[ \nu(A) = \int d_A \ d\mu =  \lim_{n\to \mathcal{U}} d_A(n). \]
	This is an atomless measure extending the asymptotic density. We will call such measures \emph{ultrafilter densities}.
	Note that if $\mathcal{U}$ is a P-point, then the resulting measure is a P-measure (see e.g. \cite{blassple}).
\end{exa}

Such measures were studied e.g. in \cite{mekl} or \cite{blassple}. Notice that two different ultrafilters may induce (in the sense of Example \ref{extension-density}) the same measure (see \cite{grebik}, \cite{Kunisada}).

It seems to be not easy to define a measure on $\omega$ 'directly' (even using ultrafilter as a parameter) in a basically different way than in the above examples. In Section \ref{themeasure} we will present one more example, which is defined
without any (direct) use of ultrafilters.

Now we will investigate relations between measures on $\omega$. Recall the definition of Rudin-Blass ordering of filters.

\begin{df} Suppose $\mathcal{F}$, $\mathcal{G}$ are filters on $\omega$. We say that $\mathcal{G}$ is \emph{Rudin-Keisler reducible to} $\mathcal{F}$, $\mathcal{G} \leq_{RK} \mathcal{F}$ in short, if there is a function $f\colon \omega
	\to \omega$ such that $X\in \mathcal{G}$ if and only if $f^{-1}[X] \in \mathcal{F}$.
We say that $\mathcal{F}$ is \emph{Rudin-Blass reducible to} $\mathcal{G}$ ($\leq_{RB}$) if there is such finite-to-one function.
\end{df}

We say that a filter $\mathcal{F}$ is \emph{nearly ultra} if there is an ultrafilter $\mathcal{U}$ such that $\mathcal{U} \leq_{RK} \mathcal{F}$.

The definition of Rudin-Keisler ordering can be naturally re-used to introduce an ordering on measures on $\omega$.

\begin{df} Suppose $\mu$, $\nu$ are measures on $\omega$. We say that $\nu$ is \emph{Rudin-Keisler reducible to} $\mu$ if there is a function $f\colon \omega\to \omega$ such that for every $X\subseteq
	\omega$ we have $$\nu(X) = \mu(f^{-1}[X]).$$
We say that $\nu$ is \emph{Rudin-Blass reducible to} $\mu$ if there is such a finite-to-one function.
\end{df}

Similarly, we will say that a measure $\mu$ is \emph{nearly Dirac} if there is an ultrafilter $\mathcal{U}$ such that  $\delta_\mathcal{U} \leq_{RK} \mu$. Notice that a measure is nearly Dirac if and only if its filter of measure 1 sets is nearly ultra.

The following fact will allow us to use both Rudin-Keisler and Rudin-Blass orderings interchangeably.
\begin{fact}\label{rk-is-rb}
    If $\mathcal{G}$ is a P-filter and $\mathcal{F}$ contains the Fr\'echet filter then 
    \[\mathcal{F}\leq_{RK}\mathcal{G} \iff \mathcal{F}\leq_{RB}\mathcal{G}.\]
    If $\mu$ is a P-measure and $\nu$ is vanishing on points then
    \[\nu \leq_{RK} \mu \iff \nu \leq_{RB} \mu.\]
\end{fact}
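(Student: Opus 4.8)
The plan is to prove each equivalence by establishing only the nontrivial implication, the other being immediate: a finite-to-one function is in particular a function, so any Rudin--Blass witness is automatically a Rudin--Keisler witness. Thus I fix a Rudin--Keisler witness $f\colon\omega\to\omega$ and manufacture from it a finite-to-one witness $g$. The two cases run along identical lines, the filter hypotheses mirroring the measure ones: ``$\mathcal{F}$ contains the Fr\'echet filter'' plays the same role for $\mathcal{F}\leq_{RK}\mathcal{G}$ as ``$\nu$ vanishes on points'' does for $\nu\leq_{RK}\mu$, while the P-filter property of $\mathcal{G}$ mirrors the P-measure property of $\mu$.

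The heart of the argument is to locate a single ``large'' set on which $f$ is already finite-to-one. Consider the decreasing sequence $U_n=\{m\in\omega : f(m)\ge n\}=f^{-1}[\omega\setminus n]$. In the filter case each $\omega\setminus n$ is cofinite, hence lies in $\mathcal{F}$, so $U_n\in\mathcal{G}$ by the Rudin--Keisler condition. In the measure case $\nu(\omega\setminus n)=1$ since $\nu$ vanishes on points, whence $\mu(U_n)=\nu(\omega\setminus n)=1$. Applying the P-filter property of $\mathcal{G}$ (resp. the P-measure property of $\mu$) to $(U_n)$ yields a pseudointersection $Z$ with $Z\in\mathcal{G}$ (resp. $\mu(Z)=\lim_n\mu(U_n)=1$). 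Since $Z\subseteq^* U_n$ says exactly that $\{m\in Z : f(m)<n\}$ is finite for every $n$, the restriction $f\clrest Z$ takes each value only finitely often, i.e. $f\clrest Z$ is finite-to-one. This is the crux of the proof, and it is where the P-property is consumed.

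It remains to repair $f$ off $Z$. I would set $g\clrest Z=f\clrest Z$ and let $g$ be any injection of $\omega\setminus Z$ into $\omega$; then $g^{-1}(\{k\})$ is a union of two finite sets for each $k$, so $g$ is globally finite-to-one. To see $g$ is still a reduction, observe $g^{-1}[X]\cap Z=f^{-1}[X]\cap Z$ for every $X$. In the filter case, because $Z\in\mathcal{G}$ one has $A\in\mathcal{G}\iff A\cap Z\in\mathcal{G}$ for all $A$ (superset- and intersection-closure of the filter), and chaining these gives $g^{-1}[X]\in\mathcal{G}\iff f^{-1}[X]\in\mathcal{G}\iff X\in\mathcal{F}$. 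In the measure case $\mu(\omega\setminus Z)=0$, so $\mu(g^{-1}[X])=\mu(g^{-1}[X]\cap Z)=\mu(f^{-1}[X]\cap Z)=\mu(f^{-1}[X])=\nu(X)$. Either way $g$ witnesses the Rudin--Blass reduction.

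I expect no serious obstacle. The only place demanding care is the verification that altering $f$ on $\omega\setminus Z$ destroys neither global finite-to-one-ness nor the reduction property, and both are settled cleanly by the fact that $Z$ is $\mathcal{G}$-large (resp. $\mu$-conull). The genuinely load-bearing idea is the passage to the pseudointersection $Z$, which turns the unbounded fibres of $f$ into finite ones; this is precisely the measure- and P-filter-theoretic analogue of the classical fact that $\mathcal{F}\leq_{RK}\mathcal{U}$ implies $\mathcal{F}\leq_{RB}\mathcal{U}$ for a P-point $\mathcal{U}$.
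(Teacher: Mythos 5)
Your proof is correct and follows essentially the same route as the paper's: both pass to the decreasing sequence $f^{-1}[\omega\setminus n]$, use the P-property to get a full ($\mathcal{G}$-large, resp.\ $\mu$-conull) pseudointersection on which $f$ is finite-to-one, and then repair $f$ off that set. The only difference is that you write out the filter case and the repair step explicitly, whereas the paper treats only the measure case and leaves those details to the reader.
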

\begin{proof}
    Right-to-left direction is immediate. For the other side, we provide a proof in the case of measures. 

    Take a P-measure $\mu$ and $\nu\leq_{RK}\mu$. Let $f\in\omega^\omega$ be a witnessing function. Since $\nu$ vanishes on points we know that $f$ is $\mu$-null-to-one. Let $A_n=f^{-1}[\omega\setminus n ]$. Then $\mu(A_ n)=1$ and there is a set $A$, a pseudointersection  of $(A_n)_n$ with $\mu(A)=1$. But for any $n\in\omega$ we have $A\cap f^{-1}[n]$ finite (as it is disjoint from $A_{n+1}$). Hence $f$ is finite-to-one on a set of full measure $\mu$. It is now easy to alter $f$ on $\omega\setminus A$ to get a finite-to-one function witnessing $\nu\leq_{RB}\mu$.
\end{proof}
\begin{prop}\label{ult-extend-meas-nd} Every measure that is an ultrafilter density is nearly Dirac.
\end{prop}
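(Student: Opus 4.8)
The plan is to reduce the statement to finding a single finite-to-one function $f$ that collapses $\nu$ onto a two-valued measure. Recall that an ultrafilter density has the form $\nu(A)=\lim_{n\to\mathcal{U}}|A\cap n|/n$ for some (non-principal) ultrafilter $\mathcal{U}$. For any finite-to-one $f\colon\omega\to\omega$ the set $\mathcal{V}=\{X:\nu(f^{-1}[X])=1\}$ is a non-principal ultrafilter as soon as $\nu(f^{-1}[X])\in\{0,1\}$ for every $X$, and then $\delta_{\mathcal{V}}\leq_{RK}\nu$ is witnessed by $f$; so the whole task is to build an $f$ for which the pushforward $f_*\nu$ is two-valued.

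First I would look for $f$ of the simplest possible shape: the \emph{block map} $f(n)=j\iff n\in[d_j,d_{j+1})$ attached to a fast-growing sequence $d_0<d_1<\cdots$. For $n\in[d_j,d_{j+1})$ one has $|f^{-1}[X]\cap n|=|f^{-1}[X]\cap d_j|+[j\in X]\,(n-d_j)$, and since $|f^{-1}[X]\cap d_j|\le d_j$ this gives the uniform bound $\bigl|\,|f^{-1}[X]\cap n|/n-[f(n)\in X]\,\bigr|\le d_{f(n)}/n$. Hence, provided the sequence is chosen so that $\lim_{n\to\mathcal{U}}d_{f(n)}/n=0$ (``$\mathcal{U}$ sees $n$ deep inside its block''), the $\mathcal{U}$-limit of $|f^{-1}[X]\cap n|/n$ equals the $\mathcal{U}$-limit of the indicator $[f(n)\in X]$, so $\nu(f^{-1}[X])=1$ if $f^{-1}[X]\in\mathcal{U}$ and $=0$ otherwise. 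This is exactly two-valuedness, with $\mathcal{V}=f_*\mathcal{U}$.

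The one real obstacle is arranging $\lim_{n\to\mathcal{U}}d_{f(n)}/n=0$: a fixed block structure will not do, because within each block the ratio $d_j/n$ runs from near $1$ (left end) down to near $0$ (right end), and an adversarial $\mathcal{U}$ could concentrate near the left ends, where the limit is $1$ rather than $0$. I would defeat this by using $\mathcal{U}$ itself to choose the blocks. Put $e_j=2^{2^j}$, so that $e_{j+1}=e_j^2$ and the intervals $E_j=[e_j,e_{j+1})$ partition $[2,\infty)\in\mathcal{U}$. Then exactly one of $\bigcup_jE_{2j}$ and $\bigcup_jE_{2j+1}$ lies in $\mathcal{U}$. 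I set $d_j=e_{2j}$ in the first case and $d_j=e_{2j+1}$ in the second; either way $d_{j+1}=d_j^{4}$, so the ``deep part'' $\bigcup_j[d_j^{2},d_{j+1})$ of the blocks coincides, up to a finite set, with whichever half of the $E$-blocks happens to be in $\mathcal{U}$. On that deep set $d_{f(n)}=d_j\le\sqrt n$, so $d_{f(n)}/n\le n^{-1/2}$ and the required limit along $\mathcal{U}$ is $0$.

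Feeding this sequence into the computation of the second paragraph yields $\nu(f^{-1}[X])\in\{0,1\}$ for all $X$, hence the two-valued pushforward; the resulting $\mathcal{V}=f_*\mathcal{U}$ is non-principal because $f$ is finite-to-one, and $\delta_{\mathcal{V}}\leq_{RK}\nu$ exhibits $\nu$ as nearly Dirac. The delicate point — and the only place where one must be careful — is the block-dodging argument of the third paragraph; the rest is a routine limit estimate.
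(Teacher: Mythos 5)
Your overall strategy is correct and is essentially the paper's own: both arguments take a sequence of rapidly growing endpoints, fuse consecutive intervals into blocks of two, let $\mathcal{U}$ decide which parity class of ``halves'' it contains, and choose the block map $f$ so that $\mathcal{U}$ concentrates deep inside the blocks, where the density of a union of full blocks is within $d_{f(n)}/n$ of the indicator $[f(n)\in X]$. Your reduction to a two-valued pushforward and the estimate $\bigl|\,|f^{-1}[X]\cap n|/n-[f(n)\in X]\,\bigr|\le d_{f(n)}/n$ are both fine.

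However, the one step you yourself flag as delicate is carried out backwards, and as written it fails. With $e_{j+1}=e_j^2$ and $d_j=e_{2j}$ (your first case, where $\bigcup_j E_{2j}\in\mathcal{U}$), the deep part of the $j$-th block is
\[
[d_j^2,d_{j+1})=[e_{2j}^2,e_{2j+2})=[e_{2j+1},e_{2j+2})=E_{2j+1},
\]
so $\bigcup_j[d_j^2,d_{j+1})=\bigcup_j E_{2j+1}$ is exactly the half that is \emph{not} in $\mathcal{U}$; symmetrically, in your second case the deep parts form a tail of the even half, again the one outside $\mathcal{U}$. Consequently $\lim_{n\to\mathcal{U}}d_{f(n)}/n=0$ does not follow. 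Concretely, if $\mathcal{U}$ contains $\{e_{2j}\colon j\in\omega\}$ (a subset of the even half, so your first case applies), then for every $n$ in that set $f(n)=j$ with $n=e_{2j}=d_j$, so $d_{f(n)}/n=1$, and the $\mathcal{U}$-limit is $1$, not $0$. The repair is a one-line swap: take $d_j=e_{2j+1}$ when $\bigcup_j E_{2j}\in\mathcal{U}$, and $d_j=e_{2j}$ when $\bigcup_j E_{2j+1}\in\mathcal{U}$ (defining $f$ arbitrarily on the finite initial segment below $d_0$). Then the deep parts are $E_{2j+2}$, respectively $E_{2j+1}$, whose union lies in $\mathcal{U}$ up to a finite set, and the rest of your argument goes through verbatim. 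With this correction your proof coincides, up to the concrete choice $e_j=2^{2^j}$ in place of an abstract $g$ with $g(k+1)/g(k)\to\infty$, with the proof given in the paper.
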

\begin{proof} Let $\mu$ be an ultrafilter extension of density. Fix an ultrafilter $\mathcal{U}$ such that $\mu(A) = \lim_{n\to \mathcal{U}} |A \cap n|/n$.

	Let $g\colon \omega \to \omega$ be an increasing function such that $g(k+1)/g(k) \to \infty$. Define $L_i = \bigcup_k [g(2k+i), g(2k+i+1))$ for $i\in \{0,1\}$ and notice that $L_0, L_1$ is a partition of $\omega$ so exactly one of them has to be in
	$\mathcal{U}$. Assume, for simplicity, that $L_1 \in \mathcal{U}$; the other case is analogous.

	Let $f\colon \omega \to \omega$ be defined by $f(n)=k$ if $n\in [g(2k), g(2k+2))$. 
	Define \[ \mathcal{V} = \{X\subseteq \omega\colon f^{-1}[X]\in \mathcal{U}\} \] and notice that $\mathcal{V}$ is an ultrafilter.
	We will show that $f$ witnesses that $\delta_\mathcal{V} \leq_{RB} \mu$.

	Let $X\in \mathcal{V}$. Fix $\varepsilon>0$. 
	Denote
	\[ A_\varepsilon = \{ n\colon |\frac{|f^{-1}[X]\cap n|}{n} - 1| < \varepsilon \}. \]
	If $n\in [g(2k+1), g(2k+2))$, then 
	\[ |f^{-1}[X]\cap n| \geq n - g(2k) \geq g(2k+1)-g(2k) > (1-\varepsilon) n \]
	for $k$ big enough (since $n\geq g(2k+1)$ and $g(2k)/g(2k+1)<\varepsilon$ for $k$ big enough).

	It follows that $f^{-1}[X] \cap L_1 \subseteq^* A_\varepsilon$. But $f^{-1}[X] \in \mathcal{U}$ (as $X\in \mathcal{V}$) and $L_1 \in \mathcal{U}$. Thus, $A_\varepsilon\in \mathcal{U}$ and, since $\varepsilon$ was arbitrary, $\mu(X) = 1$.

	Analogously, if $X\notin \mathcal{V}$, then $X^c\in \mathcal{V}$, $\mu(f^{-1}[X]^c)=1$ and so $\mu(f^{-1}[X])=0$. Hence, $\delta_\mathcal{V} \leq_{RB} \mu$.
\end{proof}

\begin{prop}\label{P-hered}
    Being a P-filter or a P-measure is hereditary with respect to Rudin-Keisler's ordering.
\end{prop}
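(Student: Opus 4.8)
The plan is to transfer the P-property along the reducing function by pushing forward the witnessing pseudointersection obtained at the top of the Rudin--Keisler pair. I would treat the two assertions in parallel, since the only difference between them is bookkeeping filter membership versus a measure value.

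For the measure case, suppose $\mu$ is a P-measure and $\nu \leq_{RK} \mu$, witnessed by $f\colon \omega \to \omega$ so that $\nu(X) = \mu(f^{-1}[X])$ for all $X \subseteq \omega$. Given a decreasing sequence $(A_n)$, I would first pull it back: the sets $f^{-1}[A_n]$ form a decreasing sequence with $\mu(f^{-1}[A_n]) = \nu(A_n)$. Applying the P-measure property of $\mu$ yields a pseudointersection $W$ of $(f^{-1}[A_n])$ with $\mu(W) = \lim_n \mu(f^{-1}[A_n]) = \lim_n \nu(A_n)$. The candidate downstairs is the pushforward $Z = f[W]$. Two things then need checking: that $Z$ is a pseudointersection of $(A_n)$, and that $\nu(Z) = \lim_n \nu(A_n)$. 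For the first, a point of $f[W] \setminus A_n$ is the image of a point of $W \setminus f^{-1}[A_n]$, a finite set, so $f[W] \setminus A_n$ is finite; hence $Z \subseteq^* A_n$ for every $n$. For the value, I would squeeze from both sides. Since $f^{-1}[f[W]] \supseteq W$, monotonicity of $\mu$ gives $\nu(Z) = \mu(f^{-1}[Z]) \geq \mu(W) = \lim_n \nu(A_n)$. On the other hand, $Z \subseteq^* A_n$ together with the fact that measures vanish on points (so finite discrepancies are $\nu$-null) gives $\nu(Z) \leq \nu(A_n)$ for each $n$, whence $\nu(Z) \leq \inf_n \nu(A_n) = \lim_n \nu(A_n)$. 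The two inequalities force equality.

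The filter case runs identically. If $\F$ is a P-filter and $\mc{G} \leq_{RK} \F$ via $f$, I would pull a decreasing sequence $(U_n)$ of elements of $\mc{G}$ back to the decreasing sequence $(f^{-1}[U_n])$ of elements of $\F$, extract a pseudointersection $W \in \F$, and again set $Z = f[W]$. The same image computation shows $Z \subseteq^* U_n$ for all $n$, and from $f^{-1}[Z] \supseteq W \in \F$ and the upward closure of $\F$ we get $f^{-1}[Z] \in \F$, i.e. $Z \in \mc{G}$ by the defining equivalence of the Rudin--Keisler reduction.

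The argument is essentially routine; the only place demanding genuine care is the verification of the measure of $Z = f[W]$, where one must notice that passing to the pushforward can only enlarge the $\mu$-preimage (supplying the lower bound $\nu(Z)\geq \mu(W)$) while the almost-inclusions $Z \subseteq^* A_n$, harmless precisely because $\mu$ and $\nu$ vanish on points, supply the matching upper bound. Everything else is formal.
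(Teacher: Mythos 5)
Your proof is correct and follows essentially the same route as the paper's: pull the decreasing sequence back along the witnessing function, apply the P-property of $\mu$ (resp.\ of the P-filter), and push the resulting pseudointersection forward. The one genuine difference is at the start: the paper first invokes Fact~\ref{rk-is-rb} to replace the Rudin--Keisler witness by a finite-to-one function, whereas you work with an arbitrary witness $f$ throughout. Your version is sound because you take the sequence $(A_n)$ to be genuinely $\subseteq$-decreasing, and $\subseteq$-inclusions (unlike $\subseteq^*$-inclusions) pull back under any map, while the upper half of your squeeze, $\nu(Z)\leq\inf_n\nu(A_n)$, only needs the standing convention that $\nu$ vanishes on points. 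The finite-to-one reduction becomes necessary precisely when one wants to treat $\subseteq^*$-decreasing sequences, as the paper's proof does, since $A\subseteq^* B$ does not imply $f^{-1}[A]\subseteq^* f^{-1}[B]$ for a non-finite-to-one $f$; this costs nothing here, because for measures vanishing on points (and for filters, via finite intersections) the P-property for $\subseteq$-decreasing sequences already implies it for $\subseteq^*$-decreasing ones. So your argument is marginally more self-contained, at the price of being stated for the slightly narrower class of sequences; both gaps close by the same routine normalization.
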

\begin{proof}
	We only provide a short proof for the case of P-measures. Consider a P-measure $\mu$, a function $g\colon \omega \to \omega$ and an $\subseteq^*$-decreasing sequence $(A_n)_n$. By \ref{rk-is-rb} it is enough to consider $g$ finite-to-one. Let $B$ be a pseudointersection of the sequence
	$(g^{-1}[A_n])_n$ suitable for the measure $\mu$. Since $\mu(g^{-1}[g[B]]) \geq \mu(B)$, for all $n\in\omega$, $g^{-1}[g[B]]\subseteq^* A_n$, and we know that $g[B]$ is a pseudointersection of $(A_n)_n$ suitable for $\mu g^{-1}$.
\end{proof}

Hence the ultrafilter $\mathcal{V}$ from the proof of Proposition \ref{ult-extend-meas-nd} is a P-point whenever $\mu$ is a P-measure (even if the ultrafilter used in the definition of $\mu$ is not a
P-point). So we have the following corollary (which was proved also by Grebik in \cite{grebik}).

\begin{cor} If there is a P-measure, which is an ultrafilter density, then there is a P-point.
\end{cor}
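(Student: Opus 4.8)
The plan is to chain together the two immediately preceding propositions. First I would fix a P-measure $\mu$ which is an ultrafilter density. By Proposition \ref{ult-extend-meas-nd} such a $\mu$ is nearly Dirac; more precisely, the proof of that proposition produces an explicit ultrafilter $\mathcal{V}$ together with a finite-to-one function witnessing $\delta_{\mathcal{V}} \leq_{RB} \mu$, and in particular $\delta_{\mathcal{V}} \leq_{RK} \mu$.

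Next I would invoke Proposition \ref{P-hered}: being a P-measure is hereditary under the Rudin-Keisler ordering, so from $\delta_{\mathcal{V}} \leq_{RK} \mu$ and the assumption that $\mu$ is a P-measure it follows that the Dirac delta $\delta_{\mathcal{V}}$ is itself a P-measure.

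The final step is to translate this back into a statement about $\mathcal{V}$. I would verify the elementary equivalence that, for a non-principal ultrafilter $\mathcal{V}$, the measure $\delta_{\mathcal{V}}$ is a P-measure if and only if $\mathcal{V}$ is a P-point. Indeed, given a decreasing sequence $(A_n)$ of elements of $\mathcal{V}$, the infimum $\inf_n \delta_{\mathcal{V}}(A_n)$ equals $1$, and a pseudointersection $Z$ with $\delta_{\mathcal{V}}(Z)=1$ is precisely a pseudointersection lying in $\mathcal{V}$; conversely, if some $A_n \notin \mathcal{V}$ then the infimum is $0$ and any pseudointersection $Z \subseteq^* A_n$ automatically has $\delta_{\mathcal{V}}(Z)=0$, since $\mathcal{V}$ extends the Fr\'echet filter. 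Applying this equivalence to the $\delta_{\mathcal{V}}$ obtained in the previous step yields that $\mathcal{V}$ is a P-point, which is exactly the conclusion.

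Since this is phrased as a corollary, there is no genuine obstacle to overcome: all the substantive work has already been front-loaded into Propositions \ref{ult-extend-meas-nd} and \ref{P-hered}, and the construction of $\mathcal{V}$ inside the former is what makes the reduction explicit. The only point demanding a moment of care is the last equivalence, and in particular checking both directions of the P-point/P-measure correspondence (including the trivial case where the infimum is $0$); this is routine.
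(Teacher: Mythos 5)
Your proposal is correct and matches the paper's argument exactly: the paper derives this corollary by noting that the ultrafilter $\mathcal{V}$ constructed in the proof of Proposition \ref{ult-extend-meas-nd} satisfies $\delta_{\mathcal{V}} \leq_{RK} \mu$, so by Proposition \ref{P-hered} the measure $\delta_{\mathcal{V}}$ is a P-measure, whence $\mathcal{V}$ is a P-point. Your extra verification of the P-point/P-measure correspondence for Dirac deltas is a routine detail the paper leaves implicit, and your handling of it is sound.
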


Perhaps it is quite surprising that consistently all measures on $\omega$ (vanishing on points) are nearly Dirac. Recall that the principle of filter dichotomy implies that all filters that are not meager are nearly ultra. But the filters of measure
$1$ sets are always non-meager:

\begin{prop}\label{Daria} If $\mu$ is a measure on $\omega$, then the filter $\mathcal{F} = \{F\colon \mu(F)=1\}$ is non-meager.
\end{prop}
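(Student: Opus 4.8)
The plan is to use a classical characterization of meager filters due to Talagrand: a filter $\mathcal{F}$ (containing the Fr\'echet filter) is meager if and only if there is a partition of $\omega$ into finite intervals $\{I_k : k \in \omega\}$ such that every $F \in \mathcal{F}$ meets all but finitely many of the intervals $I_k$. Equivalently, $\mathcal{F}$ is meager iff there is an increasing sequence $0 = n_0 < n_1 < n_2 < \cdots$ so that for each $F \in \mathcal{F}$ we have $[n_k, n_{k+1}) \cap F \neq \emptyset$ for all large $k$. So to prove that $\mathcal{F} = \{F : \mu(F) = 1\}$ is \emph{non}-meager, I would show that no such interval partition can work.

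\medskip

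First I would argue by contradiction: suppose $\mathcal{F}$ is meager and fix the witnessing interval partition $\{I_k : k\in\omega\}$ with $I_k = [n_k, n_{k+1})$. The idea is to build a single set $F$ of measure $1$ that nonetheless \emph{misses} infinitely many intervals $I_k$, contradicting the Talagrand condition. Since $\mu$ vanishes on points, each finite union of intervals has measure $0$, and I can exploit additivity to thin out: group the intervals into consecutive blocks and, using finite additivity together with $\mu(\omega)=1$, select in each block a sub-union of intervals whose complement (within the block) still captures essentially all the measure. Concretely, I would partition $\omega$ into blocks $J_m$, each a finite union of consecutive $I_k$'s, chosen so that I can discard one whole interval from each block while losing no measure — this is possible because within each block the measure is spread over finitely many intervals and removing the measure-zero finite interval (indeed any single $I_k$ has whatever measure, but I can choose to drop the one of least measure and iterate/average so the discarded mass is summably small). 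Taking $F = \omega \setminus \bigcup_m (\text{dropped interval in } J_m)$ then has $\mu(F) = 1$ while $F$ omits infinitely many intervals entirely, which contradicts meagerness.

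\medskip

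The main obstacle is making the measure bookkeeping in the previous paragraph genuinely work with only \emph{finite} additivity rather than $\sigma$-additivity. I cannot simply remove a countable collection of intervals and claim the removed mass is $0$, because finite additivity does not let me sum infinitely many measures. The clean way around this is to observe that within any block of $r$ consecutive intervals, the interval of least measure has measure at most $\mu(\text{block})/r$, so by making the blocks long enough (letting the number of intervals per block tend to infinity) I can ensure the discarded interval in block $J_m$ has measure at most $2^{-m}$, say. Then the set $D = \bigcup_m (\text{dropped interval})$ has the property that any finite subunion has small measure, and I would need to conclude $\mu(D) = 0$; here I should either pass through the lifted countably additive measure $\mu'$ on $\omega^*$ described in Section~\ref{measures-on-omega} (where $D$ corresponds to a Borel set of measure $0$ because its trace measures are controlled), or argue directly that $\mu(\omega \setminus D) = 1$ via the estimate $\mu(\omega \setminus D) \geq 1 - \sum_{m < M}\mu(\text{dropped in } J_m) - \mu(\bigcup_{m\geq M}\text{dropped in }J_m)$ and a tail argument. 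Using the countably additive lift is the least error-prone route, and I would lean on it to finish.

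\medskip

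In summary, the proof is: reduce ``non-meager'' to refuting Talagrand's interval condition; given any candidate interval partition, group into long blocks and delete the lightest interval from each block; show via the $\sigma$-additive lift to $\omega^*$ that the union of deleted intervals is $\mu$-null, so its complement is a measure-$1$ set omitting infinitely many intervals; and conclude the partition fails to witness meagerness. The delicate point throughout is respecting the finite-additivity of $\mu$, which I handle by controlling each block's discarded mass quantitatively and then invoking the Borel extension.
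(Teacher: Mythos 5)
Your opening reduction to Talagrand's characterization is exactly the paper's first step, but the construction you build on top of it has a genuine gap that cannot be patched by either of the routes you suggest. The fatal claim is that the union $D=\bigcup_m D_m$ of the dropped intervals is $\mu$-null. First, note that your bookkeeping is vacuous: every interval $I_k$ is a \emph{finite} set, so $\mu(I_k)=0$ automatically because $\mu$ vanishes on points; choosing the ``lightest'' interval in each block buys nothing. The real problem is that for a merely finitely additive measure a countable union of null sets need not be null --- this is precisely the failure of countable subadditivity --- so no control on the individual $\mu(D_m)$ (they are all $0$) gives any control on $\mu(D)$. Concretely, take $\mu=\delta_{\mathcal{U}}$ for a free ultrafilter $\mathcal{U}$ containing your set $D$ (such a $\mathcal{U}$ exists as soon as $D$ is infinite): then every $D_m$ is null, yet $\mu(D)=1$, so your candidate $F=\omega\setminus D$ has measure $0$ and is not in $\mathcal{F}$ at all. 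The lift to $\omega^*$ does not rescue this: the clopen set $D^*\subseteq\omega^*$ determined by $D$ is \emph{not} the union of the clopen sets $D_m^*$ --- each $D_m^*$ is empty, since $D_m$ is finite --- so countable additivity of the Borel extension $\mu'$ tells you $\mu'\bigl(\bigcup_m D_m^*\bigr)=0$ but says nothing about $\mu'(D^*)=\mu(D)$. Your proposed tail estimate is circular for the same reason: bounding $\mu\bigl(\bigcup_{m\geq M}D_m\bigr)$ is exactly the problem you are trying to solve.

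The underlying issue is that the statement cannot be proved by exhibiting one set fixed in advance: for Dirac-type measures, whichever single infinite, co-infinite union of intervals you name may lie in the ultrafilter, and splitting into two pieces may leave measure $1/2$ on each side. One needs a pigeonhole or limiting argument over \emph{many} candidates. The paper does this with an almost disjoint family $(A_\alpha)_{\alpha<\omega_1}$: setting $N_\alpha=\bigcup_{n\in A_\alpha}I_n$, the intersections $N_\alpha\cap N_\beta$ are finite, hence null, and a finitely additive probability measure cannot admit uncountably many pairwise almost disjoint sets of positive measure; so some $N_\alpha$ is null and $\omega\setminus N_\alpha$ is a measure-one set missing infinitely many intervals, contradicting Talagrand's condition. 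Alternatively, one can make an adaptive version of your halving idea work: considering $\nu(X)=\mu\bigl(\bigcup_{n\in X}I_n\bigr)$, repeatedly split an infinite set of indices into two infinite halves and keep the one of smaller $\nu$-measure, then take a pseudointersection $X$ of the resulting decreasing sequence; since $\nu$ vanishes on finite sets, $\nu(X)\leq\nu(X_k)\leq 2^{-k}$ for every $k$, so $\nu(X)=0$. In both repairs the null union of intervals is found non-constructively, using the measure to steer the choice --- which is exactly what your deterministic ``drop the lightest interval'' scheme cannot do.
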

\begin{proof} Suppose that $\mathcal{F}$ is meager. Then, by Talagrand's characterization of meager filters (see e.g. \cite{Blass}), there is an interval partition $(I_n)$ such that no infinite union of $I_n$'s is in the dual ideal. Let $(A_\alpha)_{\alpha<\omega_1}$ be an almost
	disjoint family (of subsets of $\omega$) and for $\alpha<\omega_1$ let $N_\alpha = \bigcup_{n\in A_\alpha} I_n$. Then $\mu(A_\alpha)>0$ for each $\alpha$ and $\mu(A_\alpha \cap A_\beta)=0$ if $\alpha \ne \beta$, a contradiction. 
\end{proof}

Notice that if $\mathcal{U}\leq_{RK} \mathcal{F}$ for a filter of the form $\{F\colon \mu(F)=1\}$ for some measure $\mu$ and an ultrafilter $\mathcal{U}$, then $\delta_\mathcal{U} \leq_{RK} \mu$. So, we have the following. 

\begin{prop} Under the principle of filter dichotomy, all measures on $\omega$ are nearly Dirac.
\end{prop}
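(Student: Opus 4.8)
The plan is to combine the principle of filter dichotomy with the two propositions immediately preceding this statement. Recall that filter dichotomy asserts that every filter on $\omega$ is either meager or nearly ultra (i.e.\ has an ultrafilter Rudin-Keisler below it). So, given an arbitrary measure $\mu$ on $\omega$ (vanishing on points, as per our standing convention), I would form its filter of measure $1$ sets, $\mathcal{F} = \{F \colon \mu(F) = 1\}$, and apply the dichotomy to $\mathcal{F}$.

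The first step is to rule out the meager alternative. This is exactly the content of Proposition \ref{Daria}: the filter $\mathcal{F} = \{F \colon \mu(F)=1\}$ is always non-meager. Hence, under filter dichotomy, $\mathcal{F}$ must be nearly ultra, meaning there is an ultrafilter $\mathcal{U}$ with $\mathcal{U} \leq_{RK} \mathcal{F}$.

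The second step is to transfer this fact from the level of filters to the level of measures. Here I would invoke the observation recorded in the sentence just before the proposition: if $\mathcal{U} \leq_{RK} \mathcal{F}$ for a filter of the form $\{F \colon \mu(F)=1\}$ via some function $f$, then the same $f$ witnesses $\delta_\mathcal{U} \leq_{RK} \mu$. Indeed, for any $X$ one has $X \in \mathcal{U} \iff f^{-1}[X] \in \mathcal{F} \iff \mu(f^{-1}[X]) = 1$, and since $\delta_\mathcal{U}(X)$ equals $1$ precisely when $X \in \mathcal{U}$ and $0$ otherwise, this says exactly $\delta_\mathcal{U}(X) = \mu(f^{-1}[X])$, which is the defining condition for $\delta_\mathcal{U} \leq_{RK} \mu$. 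By definition, this makes $\mu$ nearly Dirac, and since $\mu$ was arbitrary, every measure on $\omega$ is nearly Dirac.

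I do not expect any genuine obstacle here: the proposition is essentially a corollary assembling ingredients already in place. The only point requiring a little care is the verification in the second step that $\mu(f^{-1}[X])$ is literally $0$ or $1$ for every $X$ (so that it really equals $\delta_\mathcal{U}(X)$), which follows since $X$ and $X^c$ partition $\omega$ and $f^{-1}$ preserves this partition, so one of $f^{-1}[X], f^{-1}[X^c]$ lies in $\mathcal{F}$ and thus has measure $1$.
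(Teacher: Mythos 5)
Your proof is correct and is exactly the argument the paper intends: apply filter dichotomy to the filter $\mathcal{F}=\{F\colon\mu(F)=1\}$, rule out meagerness via Proposition \ref{Daria}, and transfer $\mathcal{U}\leq_{RK}\mathcal{F}$ to $\delta_{\mathcal{U}}\leq_{RK}\mu$ using the observation stated just before the proposition. Your added verification that $\mu(f^{-1}[X])\in\{0,1\}$ (via complements) is the right way to make that last transfer step explicit.
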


However, it is not true that all measures on $\omega$ are nearly Dirac in $\mathsf{ZFC}$. We have the following example.

\begin{prop}\label{near} In the random model there is a measure on $\omega$ which is not nearly Dirac.
\end{prop}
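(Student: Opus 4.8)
The plan is to recast the statement via pushforwards and then build, in the random extension, a measure none of whose pushforwards collapse to a two-valued measure.

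First I would unwind the definition. A measure $\mu$ is nearly Dirac exactly when $\delta_{\mathcal U}\le_{RK}\mu$ for some non-principal ultrafilter $\mathcal U$, i.e.\ when there is $f\colon\om\to\om$ with $\mu(f^{-1}[X])=\delta_{\mathcal U}(X)$ for all $X\subseteq\om$. Equivalently, the pushforward $f_*\mu$, defined by $(f_*\mu)(X)=\mu(f^{-1}[X])$, is a $\{0,1\}$-valued measure. Rerunning the argument of Fact \ref{rk-is-rb}, since $\delta_{\mathcal U}$ vanishes on points each fibre $f^{-1}(k)$ must be $\mu$-null, so any witnessing $f$ is $\mu$-null-to-one. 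Hence it suffices to construct, in the random model, a measure $\mu$ for which \emph{every} $\mu$-null-to-one $f\colon\om\to\om$ admits a set $X$ with $\mu(f^{-1}[X])\in(0,1)$; such a $\mu$ is then not $\le_{RK}$-above any $\delta_{\mathcal U}$, hence not nearly Dirac. In other words, the whole task concerns the pushforwards $f_*\mu$, which are themselves measures on $\om$ lying $\le_{RK}$ below $\mu$.

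Next I would build $\mu$ from the random generic so that it is spread out along a fixed ground-model block structure. I would fix in $V$ an interval partition $\om=\bigsqcup_n I_n$ with $|I_n|\to\infty$ and use the random real to perturb the block densities, arranging that the $\mu$-measure of any union of fibres of a map $f$ is read off as a limit of block-averages that the generic makes land strictly inside $(0,1)$. To handle a witnessing $f$ that lives in $V[G]$ rather than in $V$, I would exploit that random forcing is $\om^\om$-bounding: for a finite-to-one $f$ the function $n\mapsto\max f^{-1}(n)$ is dominated by some $h\in V$, so the fibre partition coarsens a ground-model interval partition, reducing the continuum of possible maps to a ground-model-controlled family. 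One can moreover arrange that $f$ may be taken finite-to-one (for instance by making $\mu$ a P-measure, so that Fact \ref{rk-is-rb} upgrades null-to-one to finite-to-one). Then, for each ground-model-coarse partition, a genericity/Fubini computation over the measure algebra shows that the generic forces some union of fibres to receive measure strictly between $0$ and $1$.

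The main obstacle is precisely that the partition $f$ and the putative ultrafilter $\mathcal U$ need not lie in $V$: they are names depending on the random real, so I cannot merely quote a ground-model property of $\mu$. I expect to overcome this by a name/antichain (fusion-style) argument, showing that below an arbitrary condition the set of conditions forcing that $\mu\big(\bigcup_{k\in X}f^{-1}(k)\big)\in(0,1)$ for some $X$ is dense, the density being checked by a Fubini computation over the product measure defining the generic. A second, genuinely finite-additive, subtlety is that the null ideal of $\mu$ is not a $\sigma$-ideal, so the mass at infinity carried by a partition into null pieces is invisible to $\sigma$-additive measure-algebra reasoning; the ground-model blocks $(I_n)$ are introduced precisely to track this tail mass, and controlling it uniformly across all coarsenings of $(I_n)$ is where most of the work lies. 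The null-to-one case, in which $\om^\om$-bounding no longer tames $f$ directly, is the sharpest point, and is what forces either the P-measure hypothesis above or a separate appeal to the outer-measure preservation of random forcing.
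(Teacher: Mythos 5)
Your reduction of the statement is fine: $\mu$ fails to be nearly Dirac exactly when for every $\mu$-null-to-one $f$ there is $X$ with $\mu(f^{-1}[X])\in(0,1)$. But from that point on the proposal is a plan, not a proof, and the two steps it defers are precisely the hard ones. First, you never actually define the measure: ``perturb the block densities'' along a ground-model interval partition does not produce a finitely additive measure on all of $\mathcal{P}(\omega)$ --- block averages converge only for special sets, so any honest construction must pass through an ultrafilter limit or a Hahn--Banach extension, and it is exactly the behaviour of that extension under pushforwards that you would need to control. Second, the central claim --- that ``a genericity/Fubini computation over the measure algebra shows that the generic forces some union of fibres to receive measure strictly between $0$ and $1$'' --- is asserted, not argued, and it is circular as stated: the measure is to be built from the generic, while the maps $f$ to be defeated are names depending on the same generic, so there is no fixed object about which the density computation can be performed. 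Finally, your escape route for the genuinely null-to-one (not finite-to-one) witnesses is to make $\mu$ a P-measure so that Fact \ref{rk-is-rb} applies; but producing a P-measure in the random extension is itself a major construction (it is the main theorem of \cite{p_measures_random}, requiring a P-point in the ground model), and you would then have to verify the anti-Dirac property for that specific measure, which your sketch does not begin to do.

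For contrast, the paper avoids all fibre analysis with a soft cardinality argument: by \cite[Theorem 5.6]{p_measures_random}, in the random model there is a measure whose filter of measure-one sets $\mathcal{F}$ is generated by $\omega_1$ sets; any filter Rudin--Keisler (or Rudin--Blass) below $\mathcal{F}$ is then generated by at most $\omega_1$ sets, namely by the images of the generators; and since $\mathfrak{u}=\omega_2$ in the random model, no ultrafilter is $\omega_1$-generated, so no ultrafilter lies below $\mathcal{F}$ and $\mu$ is not nearly Dirac. Note how this sidesteps both of your obstacles at once: there is no need to construct a new measure (an existing one is cited) and no need to quantify over names for witnessing maps, because counting generators is invariant under taking images. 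If you want to salvage your approach, the counting idea is the ingredient to import: rather than trying to force every pushforward to take a value in $(0,1)$, show that every pushforward of your measure's filter is small in some cardinal sense that ultrafilters in the model cannot have.
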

\begin{proof} In \cite[Theorem 5.6]{p_measures_random} the authors showed that in the random model there is a measure supported by a filter $\mathcal{F}$ which is generated by a family $\mathcal{A}$ of size $\omega_1$. Suppose that $\mathcal{G}$ is a filter such that $\mathcal{G} \leq_{RB} \mathcal{F}$. Then $\mathcal{G}$ is generated by at most $\omega_1$ sets
	(namely, $\{f[A]\colon A\in \mathcal{A}\}$, where $f$ is a witness for $\mathcal{G} \leq_{RB}\mathcal{F}$). But, in the random model the ultrafilter number $\mathfrak{u} = \omega_2$, so $\mathcal{G}$ cannot be an ultrafilter.
\end{proof}

In Section \ref{themeasure} we will present another example of a measure that is not nearly Dirac, this time under $\mathsf{CH}$.

\section{Silver forcing}
By $\SSi$ we will denote the set of partial functions from $\omega$ to $\{0,1\}$ with co-infinite domains ordered by the reverse inclusion. Such partial order is called the Silver forcing. We will use standard forcing notation. In particular, let $V$
be the ground model, let $G$ be a $\SSi$-generic over $V$. We will use the following
abbreviations and provide the necessary context to conclude whether an object is in $V$ or $V[G]$:
\begin{itemize}
    \item instead of $\dot x[G]$, the evaluation of a name by a generic, we will simply write $x$,
    \item instead of $\check x$, the standard name for $x \in V$, we will also write $x$.
\end{itemize}

\subsection{$\omega^\omega$-bounding.}

We will need a few properties of the Silver forcing. The following facts hold for $\SSi$.
\begin{prop}[$\omega^\omega$-bounding]
\label{omom}
    If $f\in V[G]$ is a function from $\omega$ to $\omega$, then there is a function $g\in V$ with $\forall n\ g(n)\geqslant f(n)$.
\end{prop}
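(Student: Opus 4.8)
The plan is to prove the equivalent forcing statement: for every condition $p \in \SSi$ and every $\SSi$-name $\dot f$ for an element of $\om^\om$, there are $q \leq p$ and a function $g \in V$ with $q \forc \dot f(n) \leq g(n)$ for all $n$. Since the set of such $q$ is dense below any condition, it meets the generic filter, which yields the statement in $V[G]$. The argument is a fusion, exploiting that every Silver condition has co-infinitely many free (undecided) coordinates.

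First I set up the fusion order. For a condition $r$, list $\om \setminus \dom(r)$ increasingly as $e_0^r < e_1^r < \cdots$ (the free coordinates), and write $r' \leq_n r$ if $r' \leq r$ and the two conditions share their first $n$ free coordinates, i.e.\ $\{e_0^r, \dots, e_{n-1}^r\} = \{e_0^{r'}, \dots, e_{n-1}^{r'}\}$, so that $r'$ neither fills nor displaces any of the first $n$ free coordinates of $r$. A sequence $p_0 \geq_0 p_1 \geq_1 p_2 \geq_2 \cdots$ then admits a fusion $q = \bigcup_n p_n$: the $n$-th free coordinate of $p_n$ is preserved at every later stage and hence stays free in $q$, so $\dom(q)$ is co-infinite, $q \in \SSi$, and $q \leq p_n$ for all $n$.

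The heart of the proof is the stage-$n$ construction. Set $p_0 = p$. Given $p_n$ with first $n$ free coordinates $e_0 < \cdots < e_{n-1}$, enumerate the $2^n$ functions $\sigma \colon \{e_0, \dots, e_{n-1}\} \to \{0,1\}$ as $\sigma_1, \dots, \sigma_{2^n}$, and process them one at a time starting from $r_0 = p_n$, only ever committing values on coordinates $\geq e_n$ so that the first $n$ free coordinates stay free. Having built $r_{i-1}$, I extend $r_{i-1} \cup \sigma_i$ to a condition deciding $\dot f(n) = m_i$ for some $m_i \in \om$; the new values lie on free coordinates of $r_{i-1}$ beyond $e_{n-1}$, and I add exactly these high-coordinate values to $r_{i-1}$ to form $r_i$, discarding the $\sigma_i$ part. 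Put $p_{n+1} = r_{2^n}$ and $g(n) = \max_i m_i$. Since the commitments accumulate, $r_0 \subseteq \cdots \subseteq r_{2^n}$, for each $i$ the condition $p_{n+1} \cup \sigma_i$ refines the condition that forced $\dot f(n) = m_i$, whence $p_{n+1} \cup \sigma_i \forc \dot f(n) = m_i \leq g(n)$. As every extension of $p_{n+1}$ can be refined to decide $e_0, \dots, e_{n-1}$, i.e.\ to fall below some $p_{n+1} \cup \sigma_i$, we obtain $p_{n+1} \forc \dot f(n) \leq g(n)$, and by construction $p_{n+1} \leq_n p_n$.

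Taking the fusion $q$ of $(p_n)_n$ gives $q \in \SSi$ with $q \leq p_{n+1}$ for every $n$, hence $q \forc \dot f(n) \leq g(n)$ for all $n$ simultaneously, with $g \in V$. The main obstacle is precisely the bookkeeping in the stage-$n$ step: one must produce a \emph{single} condition $p_{n+1}$ that bounds $\dot f(n)$ under all $2^n$ fillings of the first $n$ free coordinates at once, while touching only coordinates beyond $e_{n-1}$, so that $\leq_n$ is maintained and the fusion limit remains a legal co-infinite-domain Silver condition.
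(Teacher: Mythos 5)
Your fusion argument is correct: the relation $\leq_n$ (preserving the first $n$ free coordinates), the stage-$n$ amalgamation that handles all $2^n$ fillings $\sigma_i$ of the reserved coordinates while only committing values beyond $e_{n-1}$, and the fusion limit are all sound, and together they give the dense set of conditions forcing $\dot f(n)\leq g(n)$ for every $n$ simultaneously. The paper does not prove Proposition \ref{omom} itself but defers to the cited reference, where the proof is essentially this same standard Silver fusion argument, so your proposal matches the intended proof.
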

A proof of Proposition \ref{omom} can be found in \cite{halb12}. 

In what follows, instead of $\SSi$ we will often consider the $\omega$ (side-by-side) product of $\SSi$. We will denote it by $\SSi_\omega$. Recall that the forcing $\SSi_\omega$ is $\omega^\omega$-bounding, too.

We can extend our definition of the $\omega^\omega$-bounding property to the relation between models. We say that $W\supseteq V$ has the $\omega^\omega$-bounding property (over $V$) if for any $f\colon \omega\to \omega$ form $W$ there is $g\colon \omega\to \omega$ from $V$ with $f(n)\leqslant g(n)$. 

\begin{prop} \label{P-needed}
    For \,$W\supseteq V$ (models of ZFC) and $W$ having the $\omega^\omega$-bounding property over $V$, if $\mu\in V$ is a measure that can be extended to a P-measure in $W$, then $\mu$ is a P-measure.
    \label{omompmeas}
\end{prop}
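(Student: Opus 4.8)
The plan is to verify the definition of a P-measure for $\mu$ directly in $V$, using a P-measure extension $\nu$ in $W$ only as a device for producing a pseudointersection of the right measure, and then to transport the relevant set back into $V$ by $\omega^\omega$-bounding. So let $\nu \in W$ be a P-measure extending $\mu$, fix in $V$ a decreasing sequence $(A_n)$ of subsets of $\omega$, and put $L = \inf_n \mu(A_n) = \lim_n \mu(A_n)$. Since each $A_n$ lies in $V$ and $\nu$ agrees with $\mu$ on ground-model sets, we have $\nu(A_n) = \mu(A_n)$, so in $W$ the sequence $(A_n)$ is still decreasing with $\inf_n \nu(A_n) = L$. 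As $\nu$ is a P-measure in $W$, there is a pseudointersection $Z' \in W$ of $(A_n)$ with $\nu(Z') = L$.

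The main obstacle is that $Z'$ need not belong to $V$, so it cannot itself be the pseudointersection we are after; the whole point is to manufacture a ground-model set of the same measure. Here I would use the functional description of pseudointersections recalled before Proposition \ref{pmeasuredefs}: there is $f \in W$ with $Z' \setminus f(n) \subseteq A_n$ for all $n$. Because $W$ has the $\omega^\omega$-bounding property over $V$, I can choose a strictly increasing $g \in V$ with $g(n) \geq f(n)$ for all $n$; then still $Z' \setminus g(n) \subseteq A_n$. Now define, entirely inside $V$,
\[ Z = \bigcup_{n} \big( A_n \cap [g(n), g(n+1)) \big). \]
This $Z$ is in $V$, and since $g$ is increasing and $(A_n)$ is decreasing, $Z \setminus g(m) = \bigcup_{n \geq m}\big(A_n \cap [g(n),g(n+1))\big) \subseteq A_m$ for each $m$; hence $Z$ is a pseudointersection of $(A_n)$ witnessed by $g$.

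It then remains to check $\mu(Z) = L$, and since $Z \in V$ this amounts to computing $\nu(Z)$ and invoking that $\nu$ extends $\mu$. I would sandwich $Z$ between $Z'$ and the $A_m$. For the lower bound, on each interval one has $Z' \cap [g(n),g(n+1)) \subseteq (Z' \setminus g(n)) \cap [g(n),g(n+1)) \subseteq A_n \cap [g(n),g(n+1))$, so $Z' \setminus g(0) \subseteq Z$; as $\nu$ vanishes on points, $\nu(Z) \geq \nu(Z' \setminus g(0)) = \nu(Z') = L$. For the upper bound, $Z \setminus g(m) \subseteq A_m$ gives $\nu(Z) = \nu(Z \setminus g(m)) \leq \nu(A_m) = \mu(A_m)$ for every $m$, whence $\nu(Z) \leq L$. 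Thus $\mu(Z) = \nu(Z) = L = \inf_n \mu(A_n)$, and $Z \in V$ is the required pseudointersection, so $\mu$ is a P-measure. The only points needing care are the bookkeeping that finite initial segments are $\nu$-null (so deleting the first $g(0)$ or $g(m)$ elements changes no measures) and that the dominating $g$ can be taken strictly increasing in $V$; both are routine.
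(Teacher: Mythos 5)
Your proof is correct and follows essentially the same route as the paper: both arguments dominate the $W$-function witnessing the pseudointersection by a ground-model function $g$ and then sandwich the measure of the canonical set $\bigcup_n \big(A_n \cap [g(n),g(n+1))\big) \in V$ between $L$ and $L$. The only cosmetic difference is that the paper invokes Proposition \ref{pmeasfunc} to package this interval construction, whereas you unfold the same containments by hand from the definition of a pseudointersection.
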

\begin{proof}
    Take any decreasing sequence $\anp{A_n:n\in\omega}\in V$. If $\mu'$ is a P-measure extending $\mu$ in $V[G]$ then there is some function $f\in V[G]$ such that, if
    $$
    A=\bigcup_{n\in\omega}\left(A_n\cap [f(n),f(n+1))\,\right),
    $$
    then $\mu'(A)=\lim_{n}\mu'(A_n)$, by Proposition \ref{pmeasfunc}.
    
    Now by $\omega^\omega$-bounding property we can take $\tilde f\in V$ with $\tilde f(n)\geqslant f(n)$. 
    
    If we put 
    $$A'=\bigcup_{n\in\omega}\left(U_n\cap [\tilde f(n),\tilde f(n+1))\,\right),$$ 
	then $A'\supseteq A$ and $A'$ is still a pseudointersection of $\set{A_n:n\in\omega}$ and \[ \lim_n\mu'(A_n) \geq \mu'(A') \geq \mu'(A) = \lim_n \mu'(A_n)=\lim_n\mu(A_n). \]
    
    But now $A'\in V$ and so $\mu(A')=\mu'(A')=\lim_n\mu(A_n)$.
\end{proof}
As we know that a P-point can be viewed as a P-measure we get the following remark:
\begin{rem}
	For \,$W\supseteq V$ and $W$ having the $\omega^\omega$-bounding property over $V$, if $\mu\in V$ is a measure that can be extended to a P-measure in $W$, then $\mu$ is a P-measure.
    \label{omomppoint}
\end{rem}

The following proposition allows us to perform 'reflection' arguments for measures.

\begin{prop}\label{extending} 
	Let $\mathbb{P}=\langle \mathbb{P}_\alpha,\dot{\mathbb{Q}}_\alpha:\alpha\in\omega_2\rangle$ be a countable support iteration of proper forcings such that for each $\alpha\in\omega_2$, $\mathbb{P}_\alpha\Vdash\vert\dot{\mathbb{Q}}\vert=\omega_1$. Let
	$\dot{\mu}$ be a $\mathbb{P}$-name for a measure on $\omega$. Then there is $\alpha<\omega_2$ and a $\mathbb{P}_\alpha$-name $\dot{\mu}_\alpha$ such that
\begin{equation*}
    \mathbb{P}_\alpha\Vdash\dot{\mu}_\alpha=\dot{\mu}\cap V[G_\alpha]
\end{equation*}
\end{prop}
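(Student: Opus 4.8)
The plan is to locate a reflecting ordinal $\alpha<\omega_2$ by a closing‑off argument together with stationary reflection, working (as in all our applications) under $\mathsf{CH}$ in the ground model $V$. I would first recall the standard facts for such iterations: under $\mathsf{CH}$ one has $|\mathbb{P}_\beta|\le\aleph_1$ for every $\beta<\omega_2$, hence $\mathsf{CH}$ persists in each $V[G_\beta]$ with $\beta<\omega_2$, the iteration $\mathbb{P}=\mathbb{P}_{\omega_2}$ is $\aleph_2$‑c.c., and consequently every $\mathbb{P}$‑name for a real is equivalent to a $\mathbb{P}_\gamma$‑name for some $\gamma<\omega_2$. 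The last point is the usual support computation: a nice name uses $\le\aleph_1$ conditions (antichains have size $\le\aleph_1$), each of countable support, and a union of $\aleph_1$ countable subsets of $\omega_2$ is bounded below $\omega_2$ since $\cf(\omega_2)=\omega_2>\aleph_1$. In particular, for a $\mathbb{P}$‑name $\dot A$ for a subset of $\omega$ and a rational $q$, the Boolean value $\llbracket\dot\mu(\dot A)>q\rrbracket$ lies in the complete subalgebra $\mathrm{RO}(\mathbb{P}_\gamma)$ for some $\gamma<\omega_2$; that is, the event ``$\mu(A)>q$'' is decided by $\mathbb{P}_\gamma$.

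Next I would define a capturing function. Since $\mathsf{CH}$ holds in each $V[G_\beta]$, fix in $V$ a family $\mathcal N_\beta$ of at most $\aleph_1$ many $\mathbb{P}_\beta$‑names which is forced to enumerate all subsets of $\omega$ of $V[G_\beta]$. For each $\dot A\in\mathcal N_\beta$ and each rational $q$ let $\gamma(\dot A,q)<\omega_2$ be least with $\llbracket\dot\mu(\dot A)>q\rrbracket\in\mathrm{RO}(\mathbb{P}_{\gamma(\dot A,q)})$, and set $G(\beta)=\sup\{\gamma(\dot A,q)+1:\dot A\in\mathcal N_\beta,\ q\in\mathbb{Q}\}$. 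As this is a supremum of $\le\aleph_1$ ordinals below $\omega_2$ and $\cf(\omega_2)>\aleph_1$, we get $G(\beta)<\omega_2$; thus $G\colon\omega_2\to\omega_2$, and its set of closure points $C=\{\alpha<\omega_2:\forall\beta<\alpha\ G(\beta)<\alpha\}$ is a club.

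The crux is the following reflection of reals, which I would establish for any limit $\alpha$ with $\cf(\alpha)=\omega_1$: every subset of $\omega$ lying in $V[G_\alpha]$ already belongs to $V[G_\beta]$ for some $\beta<\alpha$. To prove it, fix a $\mathbb{P}_\alpha$‑name $\dot r$ for a real and a condition $p$, and choose a countable $N\prec H(\theta)$ containing $p,\dot r,\mathbb{P}_\alpha$ and the iteration. Since a countable support iteration of proper forcings is proper, there is an $(N,\mathbb{P}_\alpha)$‑generic $q\le p$. Put $\delta=\sup(N\cap\alpha)$; as $N\cap\alpha$ is countable and $\cf(\alpha)=\omega_1$, we have $\delta<\alpha$. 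For each $n$, elementarity gives a maximal antichain $A_n\in N$ deciding ``$n\in\dot r$'', and $N$‑genericity of $q$ forces the generic filter to meet $A_n$ inside $N$; but every condition of $N\cap\mathbb{P}_\alpha$ has support contained in $N\cap\alpha\subseteq\delta$, hence lies in $\mathbb{P}_\delta$. Thus below $q$ the name $\dot r$ is equivalent to a $\mathbb{P}_\delta$‑name, so $q\Vdash\dot r\in V[G_\delta]$. Since $p$ was arbitrary this holds densely, and any condition forcing $\dot r\notin V[G_\beta]$ for all $\beta<\alpha$ is refuted by the $q$ just produced.

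Finally I would fix $\alpha\in C$ with $\cf(\alpha)=\omega_1$ (such $\alpha$ exist, as the club $C$ meets the stationary set of ordinals of cofinality $\omega_1$) and verify that $\mu$ restricted to $V[G_\alpha]$ lies in $V[G_\alpha]$. Given $A\in\mathcal P(\omega)^{V[G_\alpha]}$, the reflection step yields $\beta<\alpha$ and a name $\dot A'\in\mathcal N_\beta$ with $A=\dot A'[G_\beta]$; then for every rational $q$ the event ``$\mu(A)>q$'' equals $\llbracket\dot\mu(\dot A')>q\rrbracket$, which by the definition of $G$ lies in $\mathrm{RO}(\mathbb{P}_{G(\beta)})\subseteq\mathrm{RO}(\mathbb{P}_\alpha)$ because $G(\beta)<\alpha$. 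Hence $\mu(A)$ is computed from $G_\alpha$, so $\mu(A)\in V[G_\alpha]$, and the whole assignment is coded by the $\mathbb{P}_\alpha$‑name $\dot\mu_\alpha$ given by $\dot\mu_\alpha(A)=\sup\{q\in\mathbb{Q}:\llbracket\dot\mu(\dot A')>q\rrbracket\in\dot G_\alpha\}$, for any $\dot A'\in\bigcup_{\beta<\alpha}\mathcal N_\beta$ representing $A$; well‑definedness is immediate since this value equals $\mu(A)$ and is independent of the chosen name. This yields $\mathbb{P}_\alpha\Vdash\dot\mu_\alpha=\dot\mu\cap V[G_\alpha]$. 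I expect the main obstacle to be exactly the reflection‑of‑reals step combined with checking that the pointwise values assemble into a genuine $\mathbb{P}_\alpha$‑name; the choice $\cf(\alpha)=\omega_1$ is precisely what makes both the support bound and the elementary‑submodel argument succeed.
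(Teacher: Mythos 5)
Your proof is correct, and it reaches the reflection point $\alpha$ by a genuinely different route than the paper. The paper works with a single elementary submodel $\mathcal{M}\prec H(\theta)$ of cardinality $\aleph_1$ that is closed under countable sequences, sets $\alpha=\sup(\mathcal{M}\cap\omega_2)$, and proves one claim that does double duty: for every $\mathbb{P}_\alpha$-name $\dot{x}$ for a subset of $\omega$ and every condition, there are densely many $q$ together with \emph{countable} names $\dot{y},\dot{z}\in\mathcal{M}$ such that $q\Vdash\dot{x}=\dot{y}\wedge\dot{\mu}(\dot{x})=\dot{z}$; properness supplies the countable names, elementarity plus the $\omega$-closure of $\mathcal{M}$ places them inside $\mathcal{M}$ (hence, since conditions in $\mathcal{M}$ have supports contained in $\mathcal{M}\cap\omega_2\subseteq\alpha$, makes them $\mathbb{P}_\alpha$-names), and $\dot{\mu}_\alpha$ is then assembled directly from these dense sets of witnesses. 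You instead decompose the problem into two independent lemmas: (a) for names of reals appearing at a fixed stage $\beta$, the values $\dot{\mu}(\dot{A})$ are decided by some bounded stage $G(\beta)<\omega_2$ --- a pure counting argument from the $\aleph_2$-c.c.\ and countable supports, with no properness needed --- yielding a club $C$ of closure points; and (b) at any $\alpha$ of cofinality $\omega_1$, every real of $V[G_\alpha]$ already lies in some $V[G_\beta]$, $\beta<\alpha$ --- this is exactly where properness enters, via a countable elementary submodel and an $(N,\mathbb{P}_\alpha)$-generic condition. You then define $\dot{\mu}_\alpha$ by Boolean values lying in the complete subalgebra $\mathrm{RO}(\mathbb{P}_\alpha)$, rather than by the paper's name-assembly. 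Your organization is the more classical chain-condition-plus-reflection argument; it isolates precisely where properness is used and makes the choice of $\alpha$ transparent (any point of $C$ of cofinality $\omega_1$ works), while the paper's single-submodel argument is more compact and avoids a separate reflection-of-reals lemma at the price of requiring the $\omega$-closed model of size $\aleph_1$. Two minor points: both arguments tacitly require $\mathsf{CH}$ (the paper's $\mathcal{M}$ with $\mathcal{M}^\omega\subseteq\mathcal{M}$ and $|\mathcal{M}|=\aleph_1$ exists only under $\mathsf{CH}$, and the cited result of Shelah giving dense subsets of size $\aleph_1$ and the $\aleph_2$-c.c.\ assumes it too), so your explicit hypothesis matches the paper's actual use of the proposition; and your ``well-definedness is immediate'' step does hide a short argument (that agreement of the two suprema is a statement about $G_\alpha$ alone, forced because the relevant Boolean values lie in $\mathrm{RO}(\mathbb{P}_\alpha)$ and any disagreement would be refuted in the full $\mathbb{P}_{\omega_2}$-extension), but this is routine.
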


\begin{proof}
	Let $\mathcal{M}\prec H(\theta)$, where $\theta$ is sufficiently big. Assume that $\mathcal{M}$ is of cardinality $\omega_1$, closed under  countable sequences (i.e. $\mathcal{M}^\omega \subseteq \mathcal{M}$) and such that $\omega_1,\omega_2,\mathbb{P}_{\omega_2},\dot{\mu}\in\mathcal{M}$. Let 
	\[ \alpha=\sup(\omega_2\cap \mathcal{M}).\] Note that $\alpha\subseteq\mathcal{M}$. By \cite[Theorem 4.1 in Chapter III]{pif} for each $\beta<\omega_2$, the poset $\mathbb{P}_\beta$ contains a dense subset of cardinality $\omega_1$, so by elementarity, for each
	$\beta < \alpha$ we can find $D_\beta\in\mathcal{M}$ which is a dense subset of $\mathbb{P}_\beta$; since $\omega_1\in\mathcal{M}$ and $\mathcal{M}$ is closed under taking countable sequences, we also have
	$D_\beta\subseteq\mathcal{M}$. Thus, for our purposes we may assume that $\mathbb{P}_\beta\subseteq\mathcal{M}$ for each $\beta < \alpha$. 

	Actually, we may also assume that $\mathbb{P}_\alpha \subseteq \mathcal{M}$, as $\bigcup_{\beta<\alpha}D_\beta$ is a dense subset of $\mathbb{P}_{\alpha}$ of cardinality $\omega_1$ (since the cofinality of $\alpha$ is $\omega_1$ and each
	condition $p\in\mathbb{P}_{\alpha}$ has a countable support, for each $p\in\mathbb{P}_{\alpha}$ there is $\beta<\alpha$ such that $p\in\mathbb{P}_\beta$, and we can assume that $p\in D_\beta$).
\medskip

	\textbf{Claim.} For every $\mathbb{P}_{\alpha}$-name $\dot{x}$ for a subset of $\omega$, and $p\in\mathbb{P}_{\alpha}$ there are countable $\mathbb{P}_\alpha$-names $\dot{y}$ and $\dot{z}$ in $\mathcal{M}$ and $q\in \mathbb{P}_\alpha$, $q\leq p$ such that
\begin{equation}
	q\Vdash  \dot{x}=\dot{y} \land  \dot{\mu}(\dot{x})= \dot{z}.
\end{equation}

	\begin{proof} (of the claim) First notice that for each $\mathbb{P}_{\alpha}$-name $\dot{x}$ for a real, the set of conditions $p\in\mathbb{P}_{\alpha}\cap\mathcal{M}$ for which there is a countable name $\dot{y}$ in $\mathcal{M}$ such that
		$p\Vdash\dot{x}=\dot{y}$ is a dense subset of $\mathbb{P}_{\alpha}$. Indeed, let $p\in \mathbb{P}_\alpha$. That there is $q\leq p$ and a countable name $\dot{y}$ such that $q\Vdash\dot{x}=\dot{y}$ follows from the properness of	$\mathbb{P}_\alpha$.  Since $\dot{y}$ is countable, it only uses countable many conditions from $\mathbb{P}_{\alpha}\subseteq\mathcal{M}$, so we have $\dot{y}\in\mathcal{M}$. 

		Now, let $\dot{x}$ be an arbitrary  $\mathbb{P}_{\alpha}$-name for a subset of $\omega$, and let $\dot{z}_0$ be a $\mathbb{P}_{\omega_2}$-name for the value of $\dot{\mu}(\dot{x})$ (note that a priori, here $\dot{z}_0$ is a
		$\mathbb{P}_{\omega_2}$-name even though $\dot{x}$ is a $\mathbb{P}_{\alpha}$-name). Let $p\in\mathbb{P}_{\alpha}$, let $\dot{y}\in\mathcal{M}$ be a countable name such that there is $q\leq p$ such that $q\Vdash\dot{x}=\dot{y}$. Since
		$\dot{y}\in\mathcal{M}$, $q\Vdash\dot{y}\subseteq\omega$, and $\dot{\mu}\in\mathcal{M}$, by elementarity, there is a $\mathbb{P}_{\omega_2}$-name $\dot{z}_1\in \mathcal{M}$ for $\dot{z}_0$. Note that $q\Vdash\dot{z}_0 =\dot{z}_1$. We can
		think of $\dot{z}_{1}$ as a name for a sequence of rationals converging to the actual value of $\dot{\mu}(\dot{y})$, so we can find $q'\leq q$ and a countable name $\dot{z}$ for $\dot{z}_1$, and by elementarity we can find both $q'$ and $\dot{z}$ in $\mathcal{M}$. Now, $\mathcal{M}$ knows that $\dot{z}$ is countable, so $\dot{z}\subseteq\mathcal{M}$, which implies that any condition appearing in $\dot{z}$ should be an element from $\mathcal{M}$, so $\dot{z}$ is actually a $\mathbb{P}_{\alpha}$-name.  
	\end{proof}

		Now we define a $\mathbb{P}_{\alpha}$-name as follows:
\begin{enumerate}
     \item[i)] For every  $\mathbb{P}_{\alpha}$-name $\dot{x}$ for a subset of $\omega$ and $p\in\mathbb{P}_{\alpha}$, let $y_{\dot{x},p}$, $z_{\dot{x},p}$ and $q_{\dot{x},p}\in\mathbb{P}_{\alpha}$ be as promised in the claim, i.e.    
     \begin{enumerate}
		\item $y_{\dot{x},p}$, $z_{\dot{x},p}$ are countable $\mathbb{P}_{\alpha}$-names.
        \item $q_{\dot{x},p}\leq p$.
		\item $q_{\dot{x},p}\Vdash (\dot{x}=y_{\dot{x},p})\land(\dot{\mu}(\dot{x})=z_{\dot{x},p})$
    \end{enumerate}
\item[ii)]	For $\dot{x}$ let
	\[ H(\dot{x}) = \{ \big\langle \langle y_{\dot{x},p}, z_{\dot{x},p}\rangle, q_{\dot{x},p}\big\rangle \colon p\in \mathbb{P}_\alpha\}. \]
\item[iii)] Finally, let 
	\[ \dot{\mu}_\alpha = \bigcup \{ H(\dot{x})\colon \dot{x} \mbox{ is a nice }\mathbb{P}_\alpha\mbox{-name for a subset of }\omega\}. \]
	Here we may think of nice names in any reasonable sense, just to avoid barging into a proper class.
\end{enumerate}

We claim that $\dot{\mu}_{\alpha}$ is the required name. Let $\dot{x}$ be a $\mathbb{P}_{\alpha}$-name for a subset of $\omega$ and $p\in\mathbb{P}_{\alpha}$. We may assume $\dot{x}$ is a nice name. Then we have $y_{\dot{x},p}$, $z_{\dot{x},p}$,
$q_{\dot{x},p}$ well defined, which means that $q_{\dot{x},p}\leq p$ and
\begin{equation}    q_{\dot{x},p}\Vdash \langle y_{\dot{x},p},z_{\dot{x},p}\rangle\in\dot{\mu}_{\alpha}.    
\end{equation}
That is,
\begin{equation}    q_{\dot{x},p}\Vdash\dot{\mu}_{\alpha}(y_{\dot{x},p})=z_{\dot{x},p}.
\end{equation}
By i)(c) above we also have that
\begin{equation}    
q_{\dot{x},p}\Vdash\dot{\mu}_{\alpha}(\dot{x})=\dot{\mu}(\dot{x}).
\end{equation}
\end{proof}

\subsection{Interval partitions}
Forcing with $\SSi$ adds a new real. Namely, if $G$ is a generic, then we can define $\dot{s}=\bigcup\{\,p:p\in G\,\}$ (in $V[G]$). We will call this a Silver real, or a generic real.
A name for such real can be easily procured:
$$
\dot s=\{\,\anp{\hat {\langle n,i\rangle}\,,\,\{\,\langle n,i\rangle\,\}}:n\in \omega,i\in 2\,\}
$$
i.e. we use functions with singleton domains to set every element of $s$.

A standard way of creating a set from a real (a 0-1-sequence) is to treat the real as a characteristic function. 
While considering ultrafilters this approach applied to Silver reals yields poor results due to the following fact:
\begin{prop}
If $\mathcal{U}$ is an ultrafilter in $V$ and $s$ is a  Silver real understood as above, then (in $V[G]$) for some $U\in \mathcal{U}$ either $s^{-1}[1] \subseteq U$ or $s^{-1}[1]\cap U=\emptyset$. \end{prop}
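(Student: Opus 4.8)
My plan is to argue by a density argument in the ground model $V$, using the canonical name $\dot s$ for the Silver real. It will be cleaner to prove the equivalent, more transparent statement that $s$ is \emph{constant} on some set $U\in\mathcal{U}$; unravelling the two possible constant values $c\in\{0,1\}$ then yields $U\subseteq s^{-1}[1]$ in one case and $U\cap s^{-1}[1]=\emptyset$ in the other, which is the dichotomy being asserted. Since the statement is absolute once witnessed, by genericity it suffices to show that
\[
D=\{\,q\in\SSi : q \text{ forces that } s \text{ is constant on some } U\in\mathcal{U}\,\}
\]
is dense in $\SSi$.

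So I would fix an arbitrary condition $p\in\SSi$. As $\dom(p)$ is co-infinite, the set $E=\omega\setminus\dom(p)$ is infinite, and $\omega$ splits into the three disjoint pieces $p^{-1}[1]$, $p^{-1}[0]$ and $E$. Because $\mathcal{U}$ is an ultrafilter, exactly one of these cells belongs to $\mathcal{U}$. If that cell is $p^{-1}[1]$ or $p^{-1}[0]$, then $p$ itself already forces $s$ to be constantly $1$ (respectively $0$) on a member of $\mathcal{U}$, so $p\in D$ and no extension is needed.

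The only remaining case is $E\in\mathcal{U}$, i.e. the ultrafilter concentrates on exactly those coordinates that $p$ leaves undetermined. Here I would split $E$ into two infinite pieces $E=E^{*}\sqcup E^{**}$; since $E\in\mathcal{U}$ exactly one of them lies in $\mathcal{U}$, and after relabelling I may assume $E^{*}\in\mathcal{U}$ while $E^{**}$ is still infinite. I then set $q=p\cup\{\langle n,0\rangle : n\in E^{*}\}$. Its domain is $\dom(p)\cup E^{*}$, whose complement contains the infinite set $E^{**}$, so $q$ is a legitimate Silver condition with $q\leq p$, and $q$ forces $s\equiv 0$ on $E^{*}\in\mathcal{U}$. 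Thus $q\in D$, establishing density, and the proposition follows by genericity.

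I expect this last case to be the only genuine obstacle, and it is precisely where the definition of $\SSi$ does the work: a Silver condition may have an \emph{infinite} domain provided it stays co-infinite, so I am free to freeze the whole $\mathcal{U}$-large half $E^{*}$ of the free part to a single value while leaving infinitely many coordinates ($E^{**}$) open. This co-infinite-domain feature is exactly what makes the Silver real ultrafilter-trivial, and the argument would break for a forcing whose conditions are required to have finite domain. (For the reading to be non-trivial one takes the dichotomy as $U\subseteq s^{-1}[1]$ rather than $s^{-1}[1]\subseteq U$, the latter being vacuously witnessed by $U=\omega\in\mathcal{U}$.)
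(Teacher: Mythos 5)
Your proof is correct and is essentially the paper's own argument in slightly reorganized form: the paper works with the dense set of conditions whose domain lies in $\mathcal{U}$ (produced by exactly your case-C maneuver of freezing a $\mathcal{U}$-large but co-infinite part of the free coordinates to a constant value) and then splits according to whether $p^{-1}[1]$ or $p^{-1}[0]$ belongs to $\mathcal{U}$, which are your cases A and B. Your reading of the first disjunct as $U\subseteq s^{-1}[1]$ is also the substantively right one: the paper's case $p^{-1}[1]\in\mathcal{U}$ yields precisely this (with $U=p^{-1}[1]$), and that is the form needed for the remark following the proposition about the behavior of ultrafilters in $V[G]$ extending $\mathcal{U}$.
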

\begin{proof}
	Fix an ultrafilter $\mathcal{U}\in V$.
	Let $\dot s$ be a name for Silver real and take any $p$ with $\Dom p\in \mathcal{U}$. Notice that the set of possible $p$'s is dense in $\SSi$ as any partial function (of co-infinite domain) can be extended to have co-infinite domain that is in
	$\mathcal{U}$.

	Now either $p^{-1}[1]\in \mathcal{U}$ and for $U=\Cod p \cup p^{-1}[1]$ we have $p\forc \dot s^{-1}[1] \subseteq U$ or $p^{-1}[0]\in \mathcal{U}$ and taking $U=p^{-1}[0]$ we have $p\forc \dot s^{-1}[1]\cap U=\emptyset$.
\end{proof}

In other words, if we want to construct an ultrafilter in $V[G]$ extending $\mathcal{U}$, we do not have many options for its behavior on $s^{-1}[1]$.

Therefore from this point on, following \cite{chod}, we will rather turn reals into interval partitions. Namely let $pos(q,k)$ denote the index $m$ for which $q(m)=1$ for the $k$-th time (more precisely $pos(q,k)=\min\{\,m:|q^{-1}[1]\cap[0,m]|=k\,\}$) and we set $pos(q,0)=0$. Then we define
$$
 I_k(r)=[pos(q,k),pos(q,k+1))
$$
and $\I(q)=\{I_k(q)\colon k\in\omega\}$. Notice that such definitions do not require $q$ to be a function with the full domain.

\section{Measures which cannot be extended to P-measures}

In this section, we prove the main result in the Silver-like model from Theorem \ref{mahmah}. We will need the following definition. The name ``silver'' comes from the fact that, as we will see, this definition is closely related to the combinatorics of
$\I(q)$, for $q \in \SSi$.

\begin{df}
	Let $\mu$ be a measure on $\omega$. We say that it is \emph{silver} if for each $N\subseteq \omega$, $r>0$ and each interval partition $(I_n)$ there is $\epsilon \in \{-1,1\}^\omega$
	such that 
	\[ \mu\big( \bigcup_{n} N^{\epsilon(n)} \cap I_n  \big)< r \] 
	(Here we follow the convention that that $N^1 = N$ and $N^{-1} = N^c$.)
\end{df}

If $\mu$ extends asymptotic density, then $\mu$ is not silver (which is witnessed e.g. by $N$ being the set of odd numbers, $r=1/2$ and $I_n = [2^n, 2^{n+1})$ for each $n$). Every Dirac
	delta is silver.

\begin{lem}\label{strongly-atomless}
Suppose that $\mu$ is a measure which is silver and that $p\in \SSi$. Then for every $i\in 2$, $r>0$ we may find $q \leq p$
		such that $\mu (D_i(q))>1-r$. 
\end{lem}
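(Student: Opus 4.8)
The plan is to feed an interval partition and a set adapted to $p$ into the silver property of $\mu$, and then to read the sign sequence that the property returns as instructions for how to extend $p$ into the desired $q$. The whole content of the lemma should be a translation: the silver property is exactly a statement about choosing a ``side'' in each interval to concentrate (or avoid) measure, and $D_i(q)$ is exactly the set recording which side a Silver condition commits to on each block, so the two notions should line up once the bookkeeping is set.

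First I would fix the interval partition $(I_n)$ and the set to which the silver property is applied, both read off from $p$ and from the definition of $D_i(q)$, arranging the partition to refine the block structure of $p$ so that every $I_n$ contains at least one coordinate outside $\dom(p)$. Applying the silver property with this data and the parameter $r$ yields $\epsilon\in\{-1,1\}^\omega$ with $\mu\big(\bigcup_n N^{\epsilon(n)}\cap I_n\big)<r$, and hence $\mu\big(\bigcup_n N^{-\epsilon(n)}\cap I_n\big)>1-r$ for the complementary selection. This large-measure set is the target: I want to extend $p$ so that $D_i(q)$ captures it.

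Next I would build $q\le p$ from $\epsilon$. On each block $I_n$ I extend $p$ by committing its free coordinates in the way that, according to the definition of $D_i(q)$, throws the corresponding piece of the partition onto the side prescribed by $\epsilon$ and the value $i$, while deliberately leaving one coordinate of each block undefined so that $\dom(q)$ stays co-infinite and $q$ is a genuine Silver condition. By construction $D_i(q)$ then agrees with the large-measure union up to the finitely-many-per-block reserved coordinates; since $\mu$ vanishes on points, this discrepancy is $\mu$-null, so $\mu(D_i(q))>1-r$. The other value in $2$ is handled symmetrically, exchanging the two sides.

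The hard part will be the matching step: verifying that deciding the free coordinates of $p$ according to $\epsilon$ really produces the set $D_i(q)$ demanded by its definition, rather than some shifted or coarser set, and doing so while keeping the domain co-infinite. Reserving a single coordinate per block is the natural way to secure co-infiniteness, and because it discards only a null set it leaves the silver estimate untouched; the remaining work is pure bookkeeping between the combinatorics of $\I(q)$ and the sign sequence $\epsilon$.
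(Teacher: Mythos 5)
Your outline (feed a partition built from the free coordinates of $p$ and a set coming from $D_i(p)$ into silverness, flip the sign sequence to get measure $>1-r$, and read $\epsilon$ as instructions for extending $p$) is exactly the paper's, but the step where you dispose of the reserved coordinates is a genuine error, and it sits precisely where the paper has to do real work. The reserved coordinates form an \emph{infinite} set: finitely many per block, but over infinitely many blocks. For a finitely additive measure, ``vanishing on points'' only makes \emph{finite} sets null; an infinite set of this kind can perfectly well have measure $1$ (think of $\mu=\delta_{\mathcal{U}}$, which is silver, with the reserved coordinates forming a set of $\mathcal{U}$ --- for instance when $\Cod p\in\mathcal{U}$ and each block has a single free coordinate, so you have no freedom in choosing them). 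So your claim that the discrepancy ``is $\mu$-null, so $\mu(D_i(q))>1-r$'' does not follow, and the estimate can fail outright. The paper's fix is to strengthen $p$ \emph{first} so that $\mu(\Cod p)=0$; this is possible, but not for free: it uses the fact that for any measure and any interval partition there is an infinite subfamily whose union is null, proved by an uncountable almost disjoint family argument as in Proposition \ref{Daria}. Once the entire free part of $p$ is null, every discrepancy your construction produces lives inside one fixed null set and the estimate survives. This idea is absent from your proposal.

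The second problem is that the ``matching step'' you defer as bookkeeping is the actual content of the lemma, and your per-block description points the wrong way. Whether $x\in D_i(q)$ is governed by the parity of $|q^{-1}[1]\cap[0,x]|$, a \emph{cumulative} quantity: committing a free coordinate in the interior of a block to $1$ flips the side of everything after it, and in particular splits that very block between $D_0(q)$ and $D_1(q)$, so blocks cannot be thrown onto prescribed sides by independent local choices. The working construction is global: take $I_n=[a_n,a_{n+1})$ where $(a_n)$ enumerates the (already null) set $\Cod p$, apply silverness to $N=D_i(p)\cap\Dom p$, and place a new $1$ exactly at those boundary coordinates $a_{n+1}$ where $\epsilon(n)\epsilon(n+1)=-1$ (and at $a_0$ iff $\epsilon(0)=-1$); an easy induction shows the number of new $1$'s below $I_n$ is even iff $\epsilon(n)=1$, so each block lands uniformly on the prescribed side. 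One must also perturb $\epsilon$ on a null union of intervals so that it changes sign infinitely often --- otherwise $q$ may acquire only finitely many $1$'s and $\I(q)$ is not an interval partition at all --- while keeping infinitely many coordinates undecided so that $q\in\SSi$. None of this is routine bookkeeping on top of your sketch; it is the mechanism that makes the translation between $\epsilon$ and $D_i(q)$ true.
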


\begin{proof}
	Suppose that $\mu$ is silver and so for every interval partition $(I_n)$, $N\subseteq \omega$ and every $r>0$ there is $\epsilon\in \{-1,1\}^\omega$ such that 
	\[ \mu\big( \bigcup_{n} N^{\epsilon(n)} \cap I_n  \big)> 1-r.\]
	Notice that we may additionally assume that and $\epsilon(n)\cdot\epsilon(n+1) = -1$ for infinitely many $n$'s. It follows from the general fact that for every measure $\nu$ on $\omega$ and for every  interval partition $(J_n)$ we may find an infinite set $X\subseteq \omega$ such that $\nu(\bigcup_{n\in X}
	J_n)=0$ (see also Proposition \ref{Daria}).

	Now, fix $i\in 2$, $p\in \SSi$ and $r>0$. Suppose (strengthening $p$ if necessary) that $\mu(\Cod p)=0$, and let $(a_n)$ be the increasing enumeration of $\Cod p$. For each $n\in \omega$ let $I_n = [a_n, a_{n+1})$. Applying the above
		remark for $(I_n)$\footnote{$(I_n)$ may not be a partition but as we deal with a measure vanishing on points, we can forget some initial segment of $\omega$}, $N = D_i(p) \cap \Dom p$ and $r$ we may find
	$\epsilon\in \{-1,1\}^\omega$ such that $\epsilon(n)\cdot\epsilon(n+1)=-1$ for infinitely many $n$'s and 
\[ \mu\big( \bigcup_{n} N^{\epsilon(n)} \cap I_n  \big)> 1-r.\]
Let $q\leq p$ be such that 
\begin{itemize}
	\item $q(a_0) =1 $ iff $\epsilon(0)=-1$
    \item $a_{n+1} \in \Dom q$ iff $\epsilon(n)\epsilon(n+1)=-1$ and in this case let $q(a_{n+1})=1$. 
\end{itemize}
Then $q \in \SSi$ and $\mu(D_i(q))>1-r$.
\end{proof}

\begin{thm}\label{Nearly-no} Suppose $\mu$ is silver. Then $\mu$ cannot be extended to a P-measure in $V^{\SSi_\omega}$.
\end{thm}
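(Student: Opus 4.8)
The plan is to argue by contradiction: assume some condition forces that $\dot\mu'$ is a P-measure on $\omega$ extending the silver measure $\mu\in V$ in $V^{\SSi_\omega}$. The first move is to reduce the (awkward) ``there is a pseudointersection of the right measure'' clause to a statement about \emph{ground-model} merging functions. Combining Proposition \ref{pmeasfunc} with the $\omega^\omega$-bounding argument used in the proof of Proposition \ref{omompmeas}, I would first record the following: if $(\dot A_n)$ is a name for a decreasing sequence, then, since $\mu'$ is a P-measure, there is an increasing $f\in V^{\SSi_\omega}$ realizing the merge at the limit; replacing $f$ by an increasing $g\in V$ dominating it (and using the same monotonicity estimate $A'\supseteq A$ as in Proposition \ref{omompmeas}) one still gets $\mu'\big(\bigcup_n A_n\cap[g(n),g(n+1))\big)=\lim_n\mu'(A_n)$. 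Thus it suffices to produce a name for a decreasing sequence $(\dot A_n)$ and a positive real $c$ with the property that it is forced that $\lim_n\mu'(A_n)=c$, while for \emph{every} increasing $g\in V$ one has $\mu'\big(\bigcup_n A_n\cap[g(n),g(n+1))\big)<c$. This is exactly a failure of the P-measure property that a single ground-model $g$ cannot be made to witness.

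\textbf{Building the sequence from the product generic.} For each coordinate $m$ the Silver real $s_m$ yields the interval partition $\I(s_m)$ together with its two colour classes $D_0(s_m),D_1(s_m)$, which partition $\omega$. The idea is to run a fusion through the $\omega$ coordinates, at stage $m$ invoking Lemma \ref{strongly-atomless} in coordinate $m$ to steer the generic colouring so that the classes $D_0(s_m),D_1(s_m)$ carry prescribed $\mu'$-mass, and to set (a suitable variant of) $A_n=\bigcap_{m<n}D_{0}(s_m)$. This sequence is decreasing, and the steering guarantees that $c:=\lim_n\mu'(A_n)$ is positive. The crucial design requirement is that the colourings of distinct coordinates be \emph{transverse} enough — which is precisely what the silverness of $\mu$ (Lemma \ref{strongly-atomless}, allowing either colour class to be pushed to large measure on any ground-model interval pattern) provides — so that genuine mass escapes to infinity, i.e. so that it is forced that $\mu'\big(\bigcap_m D_0(s_m)\big)<c$. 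It is this escaping mass, an artefact of finite (non-$\sigma$-) additivity, that a P-measure would have to recapture with a pseudointersection.

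\textbf{Defeating every ground-model merging function.} Fix an increasing $g\in V$. Inside the ground-model block $[g(n),g(n+1))$ the merged set $\bigcup_n A_n\cap[g(n),g(n+1))$ only reads the generic colourings of the coordinates $m<n$. Because each $s_m$ is Silver-generic and, by the definition of silver together with Lemma \ref{strongly-atomless}, its colouring can be flipped on an arbitrary ground-model interval partition so as to drop $\mu$-mass below any $r>0$, a fusion that keeps cofinally many coordinates ``free'' can force that this merged set misses a fixed amount $\delta>0$ of the escaping mass, so that $\mu'\big(\bigcup_n A_n\cap[g(n),g(n+1))\big)\le c-\delta<c$. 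Since $g$ ranges over a set of the ground model, the point is to secure this uniformly: one dovetails the (continuum-many) requirements ``beat $g$'' against the reserved free coordinates during the fusion, so that the single limit condition forces the estimate for all $g\in V$ at once. Together with the reduction of the first paragraph this contradicts the assumption that $\mu'$ is a P-measure, proving that a silver $\mu$ cannot be extended to a P-measure in $V^{\SSi_\omega}$.

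\textbf{Where the difficulty lies.} The main obstacle is the fusion itself, where two demands pull in opposite directions: on one hand one must steer each coordinate (via Lemma \ref{strongly-atomless}) firmly enough to pin the limit measure $\lim_n\mu'(A_n)$ to a positive value $c$ and to create a definite amount of escaping mass; on the other hand one must leave enough genericity untouched in infinitely many coordinates to guarantee, against \emph{every} ground-model merging function, that this escaping mass can never be trimmed back. Controlling the ``mass at infinity'' of a finitely additive measure simultaneously with a Silver product fusion — and it is exactly here that the $\omega$-product, rather than a single Silver real, is indispensable, since it supplies the independent freedom needed to diagonalize against all $g\in V$ — is the technical heart of the proof, and all the genuine content of the silverness hypothesis is consumed at this step.
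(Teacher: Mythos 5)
Your opening reduction (Proposition \ref{pmeasfunc} plus $\omega^\omega$-bounding) is fine, and your raw ingredients are the right ones, but the argument breaks at exactly the two steps you flag as its ``technical heart'', and the paper's proof is organized precisely so that neither step is ever needed. The fatal gap is the diagonalization: you want a \emph{single} fusion condition forcing $\mu'\big(\bigcup_n A_n\cap[g(n),g(n+1))\big)<c$ simultaneously for \emph{every} increasing $g\in V$, and you propose to ``dovetail the (continuum-many) requirements'' into the fusion. A fusion has only countably many stages, so it cannot absorb $\mathfrak{c}$-many requirements; there is no way to carry this out. The paper inverts the quantifiers so that only one ground-model function ever has to be defeated: assuming toward a contradiction that some $q$ forces $\dot{Z}$ to be a pseudointersection of the generic-built sequence $(\dot{Z}_n)$ with $\mu'(\dot{Z})=\lim_n\mu'(\dot{Z}_n)$, $\omega^\omega$-bounding yields a single $f\in V$ with $q\Vdash \dot{Z}\setminus f(n)\subseteq \dot{Z}_n$; one auxiliary merging function $i\in V$ is then found for the ground-model sets $\bigcap_{k}D_1(q'_k)$ produced by Lemma \ref{strongly-atomless} (this is where Proposition \ref{P-needed} enters: $\mu$ itself must already be a P-measure in $V$, a fact your sketch never uses); and only \emph{after} $f$ and $i$ are fixed is $q$ extended to $r$ forcing the ground-model set $\bigcup_n F_n$, of $\mu$-measure close to $1$, to be disjoint from $\dot{Z}$. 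The condition is chosen after the function, not before, so no diagonalization against $V\cap\omega^\omega$ occurs.

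A second, independent gap is the ``steering'' of $c:=\lim_n\mu'(A_n)$ to a positive value for $A_n=\bigcap_{m<n}D_0(\dot{s}_m)$. Lemma \ref{strongly-atomless} controls $\mu$ of \emph{ground-model} sets $D_i(q)$; it gives no control over $\mu'$ of the generic colour classes. Indeed, a Silver condition $q_m$ decides whether $x\in D_0(\dot{s}_m)$ only for $x$ below its first free coordinate: flipping one free position below $x$ changes the parity of ones below $x$ and hence the colour of $x$. So the decided region is finite, no condition forces any ground-model set of positive measure inside $D_0(\dot{s}_m)$, and since the only leverage on the arbitrary name $\dot{\mu}'$ is through ground-model sets, no fusion can impose a positive lower bound on $\mu'(A_n)$. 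The paper extracts positivity from the \emph{name} rather than imposing it by a condition, via the Case 1/Case 2 dichotomy: either some condition forces that block-unions $\bigcup_{k\in[n,m)}\dot{D}_0(\dot{s}_k)$ cofinally have measure arbitrarily close to $1$ (and then the $\dot{Z}_n$ are intersections of such blocks, of measure $\geq 1/2$), or some condition forces the tail intersections $\bigcap_{k\in[m,n)}\dot{D}_1(\dot{s}_k)$ to stay above a fixed $\varepsilon$. This dichotomy is entirely absent from your sketch, and without it there is no reason your sequence has positive limit measure; combined with the impossible diagonalization above, the proposed argument does not go through.
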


	\begin{proof}
		Suppose $\mu$ is silver. Let $p\in \SSi_\omega$. 
	\bigskip

\textbf{Case 1.} There is $q\leq p$ such that 
	\[ q \Vdash \forall n \ \forall \varepsilon>0 \ \exists m>n \ \mu(\bigcup_{k\in [n,m)} \dot{D}_0(\dot{s}_k)) > 1-\varepsilon. \]

	Let $q$ be as above. Using the fact that $\SSi$ is $\omega^\omega$-bounding (to get $h'$ and then $h(n) = {h'}^{(n)}(0) = h'(h'(...(0)...))$ ), we may assume (taking stronger condition if necessary) that there is an increasing function $h\colon \omega \to \omega$ such that \[ q \Vdash \forall n \
		\mu(\bigcup_{k\in [h(n), h(n+1))} \dot{D}(\dot{s}_k)) > 1-2^{-(n+2)}. \] 

		Let \[ \dot{W}_n = \bigcup_{k\in [h(n),h(n+1)) } \dot{D}(\dot{s}_k) \] for each $n$ and let $\dot{Z}_n = \bigcap_{m \leq n} \dot{W}_m$. Notice that
	\[ q\Vdash \dot{Z}_n \mbox{ is }\subseteq-\mbox{decreasing and } \forall n \ \mu(\dot{Z}_n)\geq 1/2. \]
	Suppose that there is a pseudointersection $\dot{Z}$ in $V^{\SSi_\omega}$ of $(\dot{Z}_n)$ such that $\mu(\dot{Z}) = \lim_n \mu(\dot{Z}_n) \geq 1/2$. Again, assume that $q$ forces it and that there is a function $f\colon \omega \to \omega$ in $V$ such
	that 
	\[ q\Vdash \dot{Z}\setminus f(n) \subseteq \dot{Z}_n. \]

		Now, since $\mu$ is silver, subsequently using Lemma \ref{strongly-atomless}, for each $k$ we can find $q'_k \leq q_k$ such that 
		\[ \mu \big( \bigcap_{k< n} D_1(q'_k) \big) > 3/4 \]
		for every $n\in \omega$. Define
		\[ A_n = \bigcap_{k< h(n+1)} D_1(q'_k). \]
	Since $\mu$ is a P-measure, $(A_n)$ is decreasing and $\mu(A_n)>3/4$ for each $n$ we may find a strictly increasing $i\in \omega^\omega \cap V$ such that
		\[ \mu\big( \bigcup_n A_n \cap [i(n), i(n+1)) \big) \geq 3/4. \]
		By taking $f$ and $i$ bigger if necessary we may assume that for each $n$ and for every $k\in [h(n), h(n+1))$ we have $\mathrm{stem}(q'_k) < f(n) <i(n)$.

		For every $n\in \omega$ and $k\in [h(n), h(n+1))$	extend $q'_k$ to $r_k$ in such a way that \[ F_n: = A_n \cap [i(n), i(n+1)) \subseteq D_1(r_k) \cap \mathrm{stem}(r_k). \] It is possible as $F_n \subseteq D_1(q'_k)$ and $\mathrm{stem}(q'_k) <
		f(n) < i(n)$. 

	 Let $r = (r_0, r_1,
		\dots)$. Then 
		\[ r\Vdash \forall n \ F_n \subseteq \bigcap_{k\in [h(n), h(n+1))} \dot{D_1}(\dot{s}_k) \]
	and so 
	\[ r\Vdash \forall n \ F_n \cap \dot{Z}_n  = \emptyset. \]

	Since $f(n) \leq \min F_n$ for each $n$ we have 
	\[ r\Vdash \bigcup_n F_n \cap \dot{Z} = \emptyset \] 
 But $\mu(\bigcup_n F_n)\geq 3/4$, hence $r\Vdash \mu(\dot{Z})\leq 1/4$, a contradiction.
\bigskip

	\textbf{Case 2.} There is $q\leq p$ such that 
	\[ q \Vdash \exists m \ \exists \varepsilon>0 \ \forall n>m \ \mu(\bigcap_{k \in [m,n)} \dot{D}_1 (\dot{s}_k)) > \varepsilon. \]
	
	Without loss of generality, we assume that $m=0$. Let $\dot{Z}_n = \bigcap_{k \leq n} \dot{D}_1(\dot{s}_k)$. Notice that
	\[ q\Vdash \dot{Z}_n \mbox{ is }\subseteq-\mbox{decreasing and } \forall k \ \mu(\dot{Z}_n)\geq \varepsilon. \]

	Suppose that there is a pseudointersection $\dot{Z}$ in $V^{\SSi_\omega}$ of $(\dot{Z}_n)$ such that $\mu(\dot{Z}) = \lim_n \mu(\dot{Z}_n) \geq \varepsilon$. Assume that $q$ forces it and that there is a function $f\colon \omega \to \omega$ in $V$ such
	that 
	\[ q\Vdash \dot{Z}\setminus f(n) \subseteq \dot{Z}_n. \]

		For every $n\in \omega$, using Lemma \ref{strongly-atomless} we can find $q'_n \leq q_n$ such that $\mu(D_0(q'_n))>1-\varepsilon/2^{n+2}$. Let
		\[ A_n = \bigcap_{m<n} D_0(q'_m) . \]
		The sequence $(A_n)$ is $\subseteq$-decreasing and $\mu(A_n)\geq 1-\varepsilon/2$ for each $n$. Since $\mu$ is a P-measure, there is an increasing $i\in \omega^\omega \cap V$ such that
		\[ \mu\big( \bigcup_n A_n \cap [i(n), i(n+1)) \big) \geq 1-\varepsilon/2. \]

		By taking $f$ and $i$ bigger if necessary we may assume that for each $n$ we have $\mathrm{stem}(q'_n) < f(n) <i(n)$.
		For every $n$ extend $q'_n$ to $r_n$ in such a way that \[ F_n: = A_n \cap [i(n), i(n+1)) \subseteq D_0(r_n) \cap \mathrm{stem}(r_n). \] 

	 Let $r = (r_0, r_1, \dots)$. Then 
	 \[ r\Vdash \forall n \ F_n \subseteq D_0(\dot{s}_n) \]
	and so 
	\[ r\Vdash \forall n \ F_n \cap \dot{Z}_n  = \emptyset. \]
	Since $f(n) \leq \min F_n$ for each $n$ we have 
	\[ r\Vdash \bigcup_n F_n \cap \dot{Z} = \emptyset \] 
 But $\mu(\bigcup_n F_n)\geq 1-\varepsilon/2$, hence $r\Vdash \mu(\dot{Z})<\varepsilon/2$, a contradiction.
\end{proof}

We say that a measure $\mu$ is \textit{nearly silver} if there is a (vanishing on points) measure $\nu \leq_{RK} \mu$ such that $\nu$ is silver.

\begin{cor}\label{nonuni} If $\mu$ is nearly silver, then it cannot be extended to a P-measure in  $V^{\SSi_\omega}$.
\end{cor}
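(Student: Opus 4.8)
The plan is to reduce this statement directly to Theorem \ref{Nearly-no} by pushing the hypothetical extension of $\mu$ down along the Rudin-Keisler witness and invoking the hereditariness of the P-measure property (Proposition \ref{P-hered}). Concretely, since $\mu$ is nearly silver, fix a measure $\nu \leq_{RK} \mu$, vanishing on points, which is silver, and let $f\colon \omega \to \omega$ be a function witnessing $\nu \leq_{RK} \mu$, so that $\nu(X) = \mu(f^{-1}[X])$ for every $X \subseteq \omega$.

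Arguing by contradiction, I would suppose that $\mu$ does extend to a P-measure $\mu'$ in $V^{\SSi_\omega}$, and define a measure $\nu'$ in $V^{\SSi_\omega}$ by $\nu'(X) = \mu'(f^{-1}[X])$. This is exactly the Rudin-Keisler image of $\mu'$ under $f$, so $\nu' \leq_{RK} \mu'$, and since $\mu'$ is a P-measure, Proposition \ref{P-hered} yields that $\nu'$ is a P-measure as well. It then remains to check two bookkeeping facts, both of which hold because $f \in V$: first, $\nu'$ extends $\nu$, because for any ground-model $X$ the set $f^{-1}[X]$ lies in $V$ and $\mu'$ agrees with $\mu$ there, giving $\nu'(X) = \mu'(f^{-1}[X]) = \mu(f^{-1}[X]) = \nu(X)$; second, $\nu'$ vanishes on points, since $\nu(\{k\}) = 0$ means $f^{-1}[\{k\}]$ is $\mu$-null, hence $\mu'$-null, hence $\nu'(\{k\}) = 0$.

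With these verifications in hand the contradiction is immediate: $\nu$ is a silver measure in $V$ which, via $\nu'$, has been extended to a P-measure in $V^{\SSi_\omega}$, contradicting Theorem \ref{Nearly-no} applied to $\nu$ in place of $\mu$. I do not expect any genuine obstacle here; the only point deserving care is the claim that $\nu'$ honestly extends $\nu$ (i.e.\ agrees with it on all ground-model sets, not merely on the filter of measure-one sets) and remains a measure vanishing on points, so that the conclusion of Theorem \ref{Nearly-no} is actually applicable to $\nu$. Both follow from $f \in V$ together with the standing convention that ``extending'' a ground-model measure means agreeing with it on every set coded in $V$.
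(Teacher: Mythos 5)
Your proof is correct and follows essentially the same route as the paper: both push the hypothetical P-measure extension $\mu'$ down along the Rudin--Keisler witness, use Proposition \ref{P-hered} to see the image is a P-measure extending $\nu$ (the paper phrases this contrapositively with an arbitrary $\mu$-null-to-one ground-model function $g$, concluding $\mu g^{-1}$ is never silver), and then invoke Theorem \ref{Nearly-no}. Your explicit verification that the pushed-down measure vanishes on points and agrees with $\nu$ on all ground-model sets is exactly the point the paper dispatches in its opening remark about null-to-one functions.
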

\begin{proof} First note that, in general, for a measure $\mu$ and $f\in\omega^\omega$ we have $\mu f^{-1}$ vanishing on points exactly when $f$ is $\mu$-null-to-one.

Assume $\mu$ can be extended to a P-measure $\mu'$ in $V^{\SSi_\omega}$. Let $g\in\omega^\omega\cap V$ be $\mu$-null-to-one. Then by Proposition \ref{P-hered} we get that $\mu' g^{-1}$ is a P-measure. Since $\mu' g^{-1}$ extends $\mu
	g^{-1}$, from Theorem \ref{Nearly-no} we get that $\mu g^{-1}$ cannot be silver. 

\end{proof}

\begin{cor} Suppose that $\mu$ is nearly Dirac. Then $\mu$ cannot be extended to a P-measure in the $\omega_2$ iteration of $\SSi_\omega$. In particular, no ultrafilter and no ultrafilter extension of the asymptotic density can be extended to a P-measure in this model.
\end{cor}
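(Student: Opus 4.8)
The plan is to reduce the assertion to the single-step Corollary \ref{nonuni} by reflecting a putative P-measure extension down to an intermediate stage of the iteration. First I would dispose of the ``in particular'' clause and simplify the hypothesis. Every $\delta_{\mathcal{U}}$ is nearly Dirac trivially, and every ultrafilter density is nearly Dirac by Proposition \ref{ult-extend-meas-nd}, so both follow from the main assertion. Moreover, if $\delta_{\mathcal{U}}\leq_{RK}\mu$ via $f$ and $\mu$ extended to a P-measure $\mu'$ in the iteration, then by Proposition \ref{P-hered} the reduct $\mu'\circ f^{-1}$ would be a P-measure whose restriction to ground-model sets is $\delta_{\mathcal{U}}$; hence it suffices to show that the silver measure $\delta_{\mathcal{U}}$ cannot be extended to a P-measure in the $\omega_2$-iteration $\mathbb{P}$ of $\SSi_\omega$.

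So suppose toward a contradiction that some P-measure $\mu'$ in $V[G]$ restricts to $\delta_{\mathcal{U}}$ on $V$-sets. Applying Proposition \ref{extending} to a name for $\mu'$, I fix $\alpha<\omega_2$ with $\mathbb{P}_\alpha\Vdash\dot\mu'_\alpha=\dot\mu'\cap V[G_\alpha]$, so that $\mu'_\alpha:=\mu'\cap V[G_\alpha]$ is a genuine measure in $V[G_\alpha]$ still extending $\delta_{\mathcal{U}}$. Since the tail of $\mathbb{P}$ from stage $\alpha$ on is again a countable support iteration of copies of $\SSi_\omega$, it is $\omega^\omega$-bounding over $V[G_\alpha]$, and $\mu'$ is a P-measure extending $\mu'_\alpha$; by Proposition \ref{omompmeas} this forces $\mu'_\alpha$ to be a P-measure already in $V[G_\alpha]$.

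The next step is to run the proof of Theorem \ref{Nearly-no} at stage $\alpha$, with $V[G_\alpha]$ in the role of the ground model and $\SSi_\omega$ the forcing, so that $V[G_{\alpha+1}]=V[G_\alpha]^{\SSi_\omega}$. The bookkeeping point here is that the genuine P-measure $\mu'$ lives only in the larger model $V[G]$, whereas the decreasing sequences $(\dot Z_n)$ of that proof are built from the Silver reals added at stage $\alpha$ and hence lie in $V[G_{\alpha+1}]$. Using $\omega^\omega$-boundedness of the tail exactly as in Proposition \ref{omompmeas}, I can pull a suitable pseudointersection $Z$ back from $V[G]$ into $V[G_{\alpha+1}]$ without dropping its $\mu'$-measure below $\lim_n\mu'(\dot Z_n)$. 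Because the sets $F_n$ and the function $i$ manufactured in that proof all lie in $V[G_\alpha]$, their measures are computed correctly by $\mu'_\alpha$, and the concluding contradiction ($\mu'(\bigcup_n F_n)$ large while a condition $r$ forces $\mu'(\dot Z)$ small) carries over verbatim.

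The hard part will be the one hypothesis that Theorem \ref{Nearly-no} still requires at stage $\alpha$, namely that $\mu'_\alpha$ is silver in $V[G_\alpha]$; this is where the choice of the word ``nearly Dirac'' in the statement does the work. I would argue as follows: $\mu'_\alpha$ is a P-measure whose restriction to $V$ is $\delta_{\mathcal{U}}$, and $V[G_\alpha]$ is $\omega^\omega$-bounding over $V$. Given $N$, $r>0$ and an interval partition $(I_n)$ in $V[G_\alpha]$, I would use the $\omega^\omega$-bounding of $\mathbb{P}_\alpha$ over $V$ to coarsen $(I_n)$ to a ground-model interval partition $(J_k)\in V$ with each $J_k$ containing a full $I_n$, pass to the ground-model ultrafilter $\mathcal{U}'=\{S:\bigcup_{k\in S}J_k\in\mathcal{U}\}$ on the index set, and combine the fact that $\mu'_\alpha$ is carried by $\mathcal{U}'$ on $V$-sets with its P-measure property to select signs $\epsilon(n)\in\{-1,1\}$ making $\mu'_\alpha(\bigcup_n N^{\epsilon(n)}\cap I_n)<r$. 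Verifying that such a choice is always possible---equivalently, that being silver (or nearly silver) is inherited by the reflected P-measure in the $\omega^\omega$-bounding extension $V[G_\alpha]$---is the crux of the argument and the step I expect to be the most delicate; everything else is reflection plus the $\omega^\omega$-bounding machinery already in place.
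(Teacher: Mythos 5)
Your reduction to $\delta_{\mathcal{U}}$ (via Fact \ref{rk-is-rb} and Proposition \ref{P-hered}) and your reflection bookkeeping are fine, but the step you yourself isolate as the crux --- that the reflected measure $\mu'_\alpha$ is silver (or nearly silver) in $V[G_\alpha]$ --- is a genuine gap, and the argument you sketch for it cannot be completed. The hypothesis that $\mu'_\alpha$ extends $\delta_{\mathcal{U}}$ constrains it only on sets lying in $V$, whereas the silver property quantifies over all $N$ and all interval partitions of $V[G_\alpha]$; after you coarsen $(I_n)$ to a ground-model partition, the set $N$ is still a new set, so the pushed-forward ultrafilter $\mathcal{U}'$ and the values of $\mu'_\alpha$ on old sets say nothing about $\mu'_\alpha\big(\bigcup_n N^{\epsilon(n)}\cap I_n\big)$, and the P-measure property produces pseudointersections of decreasing sequences, not sign selections. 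Moreover, there can be no soft proof of the implication you need (P-measure, extending a ground-model Dirac delta, in a proper $\omega^\omega$-bounding extension $\Rightarrow$ silver): random forcing is proper and $\omega^\omega$-bounding, and by Theorem \ref{p_measures_random} a ground-model P-point extends to a P-measure in the random extension, yet such extensions need not be silver --- the natural one assigns measure $1/2$ to every set of the form $\bigcup_n N^{\epsilon(n)}\cap I_n$, where $N$ is the set coded by the random real, for every interval partition and every choice of signs. So any proof of silverness at stage $\alpha$ would have to exploit the Silver-specific combinatorics of $\mathbb{P}_\alpha$; but for the Silver iteration the statement ``every P-measure in $V[G_\alpha]$ extending $\delta_{\mathcal{U}}$ is silver'' holds only vacuously, being precisely the kind of non-existence statement the corollary asserts. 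Your plan is therefore circular at its decisive point.

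The repair is to use silverness where it is actually known, namely in $V$, and not to reflect at all. Since the first step of the iteration is $\SSi_\omega$, factor $\mathbb{P}_{\omega_2}\cong \SSi_\omega\ast\dot{\mathbb{P}}_{\mathrm{tail}}$, where the tail is forced to be proper and $\omega^\omega$-bounding, so that the whole iteration is $\omega^\omega$-bounding over $V$. Now rerun the proof of Theorem \ref{Nearly-no} (and of Corollary \ref{nonuni}) with the putative P-measure $\mu'$ living in the final model and with conditions taken in $\mathbb{P}_{\omega_2}$: the sets $\dot{W}_n$, $\dot{Z}_n$ are names involving only the stage-zero Silver reals; the function $h$ and the modulus $f$ of the pseudointersection are pulled into $V$ by $\omega^\omega$-bounding of the whole iteration; $\delta_{\mathcal{U}}$ is a P-measure in $V$ by Proposition \ref{P-needed} under the contradiction hypothesis, so the function $i$ can be chosen in $V$; Lemma \ref{strongly-atomless} is applied in $V$; the condition $r$ strengthens only the $\SSi_\omega$-coordinates of $q$, leaving the tail coordinate untouched; and the sets $F_n$ lie in $V$, so $\mu'\big(\bigcup_n F_n\big)=\delta_{\mathcal{U}}\big(\bigcup_n F_n\big)$ and the concluding contradiction reads exactly as in the paper. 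Reflection via Proposition \ref{extending} is genuinely needed only for the corollary on countable Maharam type, where the P-measure is not given in any intermediate model in advance and where silverness at stage $\alpha$ is recovered from Theorem \ref{Maharam} applied inside $V[G_\alpha]$ (countable Maharam type passes to restrictions); in the nearly Dirac case there is no analogue of that recovery step, and, fortunately, no need for one.
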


\begin{prob} Assume $\mathsf{CH}$. Is there a measure that is not nearly silver? 
\end{prob}

If the answer is negative, then Theorem \ref{nonuni} and Proposition \ref{extending} imply that there are no P-measures in the model obtained by $\omega_2$ iteration of $\SSi_\omega$.
Note that the answer to the question: ``Is there a measure which is not nearly Dirac (under $\mathsf{CH}$)?'' is positive, see Section \ref{themeasure}.

Recall that a measure $\mu$ carried by a Boolean algebra $\mathbb{A}$ induces a pseudo-metric on $\mathbb{A}$: $d_\mu(A,B) = \mu(A\triangle B)$. The \emph{Maharam type} of $\mu$ is the density of the pseudo-metric space $(\mathbb{A}, d_\mu)$. In
general, the Maharam type of a measure $\mu$ on a space $K$ is the density of $\mathrm{Bor}(K)_{/\mu = 0}$ endowed with the pseudometric $d_\mu$. Notice that if there is $c>0$ and a family $(B_\alpha)_{\alpha<\kappa}$ of Borel sets such that $\mu(B_\alpha
\triangle B_\beta) > c$ for each $\alpha \ne \beta < \kappa$, then $\mu$ has the Maharam type at least $\kappa$.

For a  strictly increasing function $g\colon \omega\to \omega$ let $\widehat{g}\colon \omega\to \omega$ be the finite-to-one function defined by 
\[ \widehat{g}(k) = n \iff g(n) \leq k < g(n+1). \]

\begin{thm}\label{Maharam} Suppose that $\mu$ is a measure on $\omega$ which is not nearly silver. Then $\mu$ has an uncountable Maharam type.
\end{thm}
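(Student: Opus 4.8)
The plan is to prove the contrapositive: \textbf{if $(\mathcal{P}(\omega),d_\mu)$ is separable (i.e.\ $\mu$ has countable Maharam type), then $\mu$ is nearly silver.} The engine is an elementary reformulation of silverness. For an interval partition $(I_n)$, a set $N$ and $\epsilon\in\{-1,1\}^\omega$, putting $T=\{n:\epsilon(n)=-1\}$ one has the exact set identity
\[ \bigcup_n N^{\epsilon(n)}\cap I_n \;=\; N\,\triangle\,\bigcup_{n\in T} I_n . \]
Hence $\inf_\epsilon \mu\big(\bigcup_n N^{\epsilon(n)}\cap I_n\big)=\inf_{T}\, d_\mu\big(N,\bigcup_{n\in T}I_n\big)$, so that \emph{$\mu$ is silver if and only if, for every interval partition, the family of unions of its blocks is $d_\mu$-dense in $(\mathcal{P}(\omega),d_\mu)$}. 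I would record this as a lemma; it turns silverness into a statement about density of interval‑unions \emph{at every scale}.

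Next I would fix a countable $d_\mu$-dense family $\{D_k:k\in\omega\}$, which is exactly what countable Maharam type provides. The goal becomes: build a finite-to-one $f\colon\omega\to\omega$ so that in $\nu=\mu f^{-1}$ (automatically vanishing on points, since $f$ has finite fibres and $\mu$ kills finite sets) the unions of intervals of every interval partition are $d_\nu$-dense. Here $d_{\mu f^{-1}}(Y,Z)=d_\mu(f^{-1}[Y],f^{-1}[Z])$, so this is a statement about the subalgebra $\{f^{-1}[Y]\}$; and once each $D_k$ is $f$-measurable, their images are $d_\nu$-dense, so it suffices to make each $D_k$, viewed in $\nu$, approximable by unions of blocks of \emph{every} coarsening of the partition induced by $f$. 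I would construct $f$ by a fusion on a nested, self-similar interval scheme: at level $\ell$ the block structure resolves $D_0,\dots,D_\ell$ up to $\mu$-mass $2^{-\ell}$, each finer block lying inside a single cell of the finite algebra generated by the already treated $D_k$'s, with block lengths chosen so that the coarse tails carry the mass. The separability — only finitely many $D_k$ to control per level, in contrast to the doubly-exponential spread of block-behaviours that forces measures like the asymptotic density to have uncountable type — is precisely what makes such a scheme possible and is used decisively here. The purely atomic part of $\mu$ is handled separately by a reduction collapsing its atoms to a single ultrafilter, whose Dirac measure is silver.

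The hard part is exactly the phrase \emph{every coarsening}. For a single, fine partition the required density is immediate; but silverness of $\nu$ demands that after grouping blocks into arbitrarily coarse intervals the generators $D_k$ remain approximable by unions of those coarse intervals. By the identity above this is equivalent to asking that the optimal flip value $\inf_\epsilon\nu\big(\bigcup_n N^{\epsilon(n)}\cap I_n\big)=\int\min(m_0,m_1)\,d\rho$ vanish, where $\rho$ is the image of $\nu$ under the map sending a point to the index of its interval and $m_0,m_1$ are the conditional masses of $N^c$ and $N$ over the ``interval at infinity''; that is, each such conditional must be degenerate. Securing this \emph{simultaneously for all coarsenings} is where the nested alignment must be genuinely self-similar: each generator has to become a union of intervals not only at its own scale but at every coarser scale, the finer generators being rendered $\nu$-negligible at each coarse scale rather than averaged out to a non-degenerate conditional. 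I expect this all-scales alignment, together with the verification that the resulting conditionals are indeed degenerate, to be the main obstacle; the level-by-level bookkeeping of the fusion and the treatment of the atomic part are comparatively routine.
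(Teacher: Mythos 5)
Your two reformulations are correct and genuinely useful: the identity $\bigcup_n N^{\epsilon(n)}\cap I_n = N \triangle \bigcup_{n\in T} I_n$ (with $T=\{n\colon\epsilon(n)=-1\}$) does show that $\mu$ is silver if and only if for every interval partition the unions of its blocks are $d_\mu$-dense, and the reduction of silverness of $\mu f^{-1}$ to the behaviour of a countable $d_\mu$-dense family $\{D_k\}$ is sound. (This identity is in fact the same algebraic fact the paper exploits, only in the opposite direction.) But from that point on you have a plan, not a proof: the finite-to-one $f$ is described only by its intended effect, and you yourself flag the ``all-scales alignment'' --- making every $D_k$ approximable by unions of coarse blocks for \emph{every} coarsening of the partition inducing $f$ --- as the ``main obstacle'' that you ``expect'' to overcome. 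That obstacle is not a technical footnote; it is the entire content of the theorem, and nothing in the proposal addresses it.

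Moreover, the fusion scheme cannot work in the terms in which you describe it. Since $\mu$ vanishes on points, every block $[a_n,a_{n+1})$ of the partition inducing $f$, and every finite union of blocks, is $\mu$-null, while every tail $[a_n,\infty)$ has measure one; so conditions such as ``at level $\ell$ the block structure resolves $D_0,\dots,D_\ell$ up to $\mu$-mass $2^{-\ell}$'' or ``block lengths chosen so that the coarse tails carry the mass'' carry no measure-theoretic information, and the requirement that ``each finer block lie inside a single cell of the finite algebra generated by the already treated $D_k$'s'' is in general literally unachievable (if some $D_k$ is the set of odd numbers, no interval of length two lies in one cell) and would be measure-vacuous even when achievable. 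Silverness of $\mu f^{-1}$ depends only on infinite unions $\bigcup_{n\in T}\left(\,\cdot\,\cap[a_n,a_{n+1})\right)$, i.e., on behaviour at infinity, which is precisely what level-by-level bookkeeping does not control; likewise your claim that ``for a single, fine partition the required density is immediate'' is true only vacuously (take singleton blocks, so $f=\mathrm{id}$), after which the coarsening problem is just silverness of $\mu$ itself. Note that the paper sidesteps your difficulty entirely: it never constructs a silver reduct. Assuming $\mu$ is not nearly silver, \emph{every} reduct $\mu\widehat{g}^{-1}$ fails silverness with witnesses $(N_\alpha,f_\alpha,r_\alpha)$; composing these witnesses through $\omega_1$ stages ($g_{\alpha+1}=g_\alpha\circ f_\alpha$, diagonalizing at limits, stabilizing $r_\alpha=r$) and using your $\triangle$-identity to see that $A^\alpha\triangle A^\beta$ is again, modulo finite, a set of witness form and hence of measure greater than $r$, it produces $\omega_1$ many sets pairwise $r$-separated in $d_\mu$. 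The implication ``countable type implies nearly silver'' then follows non-constructively, whereas your route demands exhibiting the silver reduct explicitly --- a strictly harder task that the proposal leaves open.
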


\begin{proof}
	We will construct sequences $(f_\alpha)_{\alpha<\omega_1}$ and $(g_\alpha)_{\alpha<\omega_1}$ of strictly increasing functions in $\omega^\omega$, a sequence $(r_\alpha)_{\alpha<\omega_1}$ of positive reals and a sequence
	$(N_\alpha)_{\alpha<\omega_1}$ of subsets of $\omega$. First, we will show how to obtain $f_\alpha, N_\alpha, r_\alpha$ having $g_\alpha$ defined and then how to obtain $g_\alpha$ having defined $f_\beta$ for all $\beta < \alpha$.

Suppose that $\alpha<\omega_1$ and that $g_\alpha$ is defined. Then let $r_\alpha, N_\alpha, f_\alpha$ witness that the measure $\mu\widehat{g}_\alpha^{-1}$ is not silver, i.e that
	for every $\epsilon \in \{-1,1\}^\omega$ 
	\[ \mu\big(\widehat{g}_\alpha^{-1}\big[ \bigcup_n [f_\alpha(n), f_\alpha(n+1)) \cap N^{\epsilon(n)}_\alpha\big] \big) > r_\alpha. \]

	Let $g_0$ be the identity function. For $\alpha<\omega_1$ let $g_{\alpha+1} = g_\alpha \circ f_\alpha$. If $\xi<\omega_1$ is a limit ordinal let $g_\xi$ be the function such that the interval partition induced by $g_\xi$ dominates all the interval partitions induced
	by $g_\alpha$ for $\alpha<\xi$ and $g_\xi[\omega]$ is a pseudo-intersection of $g_\alpha[\omega]$ for $\alpha<\xi$.

	Passing to a subsequence, if needed, we may assume that there is $r$ such that $r=r_\alpha$ for each $\alpha<\omega_1$.

	Now, for $\alpha<\omega_1$ and $\epsilon\in \{-1,1\}^\omega$ let 
	\[ A^\alpha_\epsilon = \widehat{g}_\alpha^{-1}\big[ \bigcup_n [f_\alpha(n), f_\alpha(n+1)) \cap N_\alpha^{\epsilon(n)}\big]. \]
	Then
	\[ \mu((A^\alpha_\epsilon))> r \]
	for each $\alpha<\omega_1$ and $\epsilon\in \{-1,1\}^\omega$.

	Notice that whenever $\alpha < \beta$ and $\epsilon, \epsilon' \in \{-1,1\}^\omega$ there is $\epsilon'' \in \{-1,1\}^\omega$ such that 
	\[ A^\alpha_\epsilon \triangle A^\beta_{\epsilon'} =^* A^\alpha_{\epsilon''}. \]

	Denote $A^\alpha = A^\alpha_\epsilon$ if $\epsilon(n)=1$ for each $n$.
	Then for each $\alpha<\beta<\omega_1$ we have
	\[ \mu(A^\alpha \triangle A^\beta) > r \]
	and therefore the sequence $(A^\alpha)$ proves that $\mu$ has uncountable Maharam type.
\end{proof}

Notice that P-points can be treated as P-measures of Maharam type 2 and so Chodounsk\'y-Guzm\'an theorem can be stated as: there are no P-measures of Maharam type 2 in the Silver model. Using the above theorem we may prove the
following: 

\begin{cor} There are no P-measures of countable Maharam type in the model obtained by $\omega_2$ iteration of $\SSi_\omega$.
\end{cor}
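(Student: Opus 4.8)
The plan is to argue by contradiction, reflecting a hypothetical P-measure of countable Maharam type down to an intermediate stage where Theorem~\ref{Maharam} and Corollary~\ref{nonuni} can be brought to bear. So suppose $\mu$ is a P-measure of countable Maharam type in $V[G_{\omega_2}]$, the model obtained by the countable support $\omega_2$-iteration $\PP=\langle \PP_\beta, \QQ_\beta:\beta<\omega_2\rangle$ with each $\QQ_\beta=\SSi_\omega$. Since the ground model satisfies $\mathsf{CH}$ and the iteration is proper and $\omega^\omega$-bounding, every intermediate model satisfies $\mathsf{CH}$, so $\PP_\beta\Vdash|\QQ_\beta|=\omega_1$ and Proposition~\ref{extending} applies to a name $\dot\mu$ for $\mu$. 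This produces an ordinal $\alpha<\omega_2$ and a measure $\mu_\alpha=\mu\cap V[G_\alpha]$ living in $V[G_\alpha]$.

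First I would show that $\mu_\alpha$ is itself a P-measure in $V[G_\alpha]$. The tail forcing $\PP_{[\alpha,\omega_2)}$ is a countable support iteration of $\omega^\omega$-bounding proper forcings, hence $\omega^\omega$-bounding, so $V[G_{\omega_2}]$ has the $\omega^\omega$-bounding property over $V[G_\alpha]$. As $\mu_\alpha$ is extended by the P-measure $\mu$ in $V[G_{\omega_2}]$, Proposition~\ref{P-needed} yields that $\mu_\alpha$ is a P-measure in $V[G_\alpha]$. Next I would show $\mu_\alpha$ is nearly silver in $V[G_\alpha]$: if not, then Theorem~\ref{Maharam}, applied inside $V[G_\alpha]$, produces $r>0$ and a family $(A^\xi)_{\xi<\omega_1}$ of subsets of $\omega$ in $V[G_\alpha]$ with $\mu_\alpha(A^\xi\triangle A^\eta)>r$ for all $\xi\ne\eta$. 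These sets lie in $V[G_{\omega_2}]$ and, because $\mu$ extends $\mu_\alpha$, still satisfy $\mu(A^\xi\triangle A^\eta)>r$; hence $\mu$ would have uncountable Maharam type in $V[G_{\omega_2}]$, contradicting our assumption. So $\mu_\alpha$ is nearly silver.

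Finally, I would derive a contradiction from Corollary~\ref{nonuni}. Writing $V[G_{\alpha+1}]=V[G_\alpha]^{\SSi_\omega}$, the model $V[G_{\omega_2}]$ is an $\omega^\omega$-bounding extension of $V[G_{\alpha+1}]$, and $\mu\supseteq\mu_\alpha$ is a P-measure there. The step I expect to require the most care is bridging from the single Silver product to the whole iteration. The point is that the obstruction in the proof of Theorem~\ref{Nearly-no} (and hence of Corollary~\ref{nonuni}) is a single condition built entirely in the initial $\SSi_\omega$ coordinate, while $\omega^\omega$-bounding is invoked only to pull the modulus function of the pseudointersection back into the Silver ground model. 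Consequently that argument rules out extending a nearly silver measure of $V[G_\alpha]$ to a P-measure not merely in $V[G_\alpha]^{\SSi_\omega}$, but in any $\omega^\omega$-bounding extension of it, in particular in $V[G_{\omega_2}]$. This contradicts the existence of $\mu$ and completes the proof.

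The main obstacle is precisely this last bridging. An alternative would be to capture $\mu\cap V[G_{\alpha+1}]$ as a measure genuinely belonging to $V[G_{\alpha+1}]$ by a second reflection and then quote Corollary~\ref{nonuni} verbatim; but since Proposition~\ref{extending} reflects only to ordinals of cofinality $\omega_1$, verifying that this successor-stage restriction is an element of $V[G_{\alpha+1}]$ is the delicate issue, so I would prefer to lean on the robustness of the $\SSi_\omega$-step obstruction under $\omega^\omega$-bounding tails, which is also what makes the analogous nearly Dirac corollary work.
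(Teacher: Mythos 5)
Your proposal is correct and takes essentially the same route as the paper, whose entire proof of this corollary is the one-line citation of Theorem~\ref{Maharam}, Theorem~\ref{Nearly-no} and Proposition~\ref{extending}. The details you supply --- reflecting $\dot\mu$ via Proposition~\ref{extending}, getting that $\mu_\alpha$ is a P-measure from Proposition~\ref{P-needed} and tail $\omega^\omega$-bounding, deducing ``nearly silver'' from Theorem~\ref{Maharam} by upward absoluteness of the $r$-separated family, and observing that the obstruction in the proof of Theorem~\ref{Nearly-no} (a condition modifying only the $\SSi_\omega$-coordinate, with $\omega^\omega$-bounding used only to pull the modulus function back to $V[G_\alpha]$) persists through the rest of the iteration --- are precisely what the paper's citation leaves implicit, so you have identified and filled the right gap rather than deviated from the intended argument.
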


\begin{proof} This is a consequence of Theorem \ref{Maharam}, Theorem \ref{Nearly-no} and Proposition \ref{extending}.
\end{proof}

In Section \ref{themeasure} we will show that there exist non-atomic P-measures of countable Maharam type (under $\mathsf{CH}$).

\section{Rapid Filters and P-measures}
Let $\F$ be some filter containing all co-finite sets.  We will say that $\F$ is {\it rapid} if for any $g\in\omega^\omega$ there is an increasing $f\in\omega^\omega$ such that $f>g$ and $f[\omega]\in \F$.

\begin{thm} \label{rapid}
If $\F$ is a rapid filter then $\SSi_\omega$ forces that there is no P-measure extending $\F$.
\label{nopmeasure}
\end{thm}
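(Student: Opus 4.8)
The plan is to show that if $\F$ is rapid, then any measure $\mu$ extending $\F$ in $V^{\SSi_\omega}$ fails the P-measure property by exhibiting a specific decreasing sequence whose only candidate pseudointersections have measure strictly below the limit. The natural decreasing sequence to use comes from the interval partitions $\I(\dot{s}_k)$ generated by the Silver generics: rapidity of $\F$ ground-model lets us thin out intervals fast enough to interact badly with the generically-added partitions. First I would set up, for the generic reals $\dot{s}_0, \dot{s}_1, \dots$, the associated interval partitions $\I(\dot{s}_k)$, and use that $\mu$ restricted to $\F$ assigns measure $1$ to each $F \in \F$. Because $\F$ is rapid and lives in $V$, while $\SSi_\omega$ is $\omega^\omega$-bounding, I expect to be able to force that a rapid enumeration $f[\omega] \in \F$ grows faster than the block-structure $h$ controlling where the generic partitions place their marks.

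The key mechanism I would exploit is the one already used in the proof of Theorem \ref{Nearly-no}: given a condition $p = (p_0, p_1, \dots) \in \SSi_\omega$ and Lemma \ref{strongly-atomless}-style control over the sets $D_i(\dot{s}_k)$, one can strengthen coordinates to push the measure of prescribed unions of $D_i(\dot{s}_k)$ close to $1$ or close to $0$ on demand. For the rapid case, I would pick a ground-model witness $f \in \omega^\omega$ increasing with $f[\omega] \in \F$, and build a decreasing sequence $\dot{Z}_n$ out of the generic data whose measures tend to some $c > 0$ (using $\mu(F) = 1$ for $F \in \F$ together with the rapid gaps). Then, assuming toward a contradiction that a pseudointersection $\dot{Z}$ exists with $\mu(\dot{Z}) = \lim_n \mu(\dot{Z}_n)$, I would produce, by extending the coordinate conditions $p_k$ to $r_k$ exactly as in Case 1 and Case 2 of Theorem \ref{Nearly-no}, a stronger condition $r$ forcing a family of blocks $F_n$ of substantial total measure that is forced disjoint from $\dot{Z}$. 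Since $f$ was chosen rapid, the blocks $F_n$ can be arranged to sit inside the gaps of the interval partition where $\dot{Z}$ is forced to avoid $\dot{Z}_n$, yielding $r \Vdash \mu(\dot{Z}) \leq \lim_n \mu(\dot{Z}_n) - \eta$ for some fixed $\eta > 0$ — the desired contradiction.

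The main obstacle I anticipate is arranging the rapidity of $\F$ to synchronize with the generically chosen interval partitions $\I(\dot{s}_k)$: the rapid function $f$ is fixed in the ground model, but the positions $pos(\dot{s}_k, j)$ where the generic partition places its breakpoints are only decided by the generic, so I must use $\omega^\omega$-bounding to dominate these positions by a ground-model function \emph{before} invoking rapidity, and then feed that bound into the choice of $f$. Concretely, I would first extend $p$ to force a ground-model increasing $h$ bounding the relevant breakpoint positions, then choose $f > h$ with $f[\omega] \in \F$ using rapidity, and only afterwards carry out the coordinate-extension argument. The delicate point is ensuring that the measure-one sets coming from $\F$ (which are where $\mu$ is concentrated) line up with the blocks $F_n$ we push into the gaps, so that the total measure $\mu(\bigcup_n F_n)$ stays bounded away from $0$ while $\dot{Z}$ is forced to miss all of them; this is exactly where rapidity does the work that silverness did in Theorem \ref{Nearly-no}.

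Once the contradiction is in place for an arbitrary condition $p$, a density argument shows no condition can force the existence of a suitable pseudointersection, so $\SSi_\omega \Vdash$ ``there is no P-measure extending $\F$,'' completing the proof.
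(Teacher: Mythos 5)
Your proposal has the right outer skeleton --- use $\omega^\omega$-bounding to pin the pseudointersection name to a ground-model bound $g$, only then invoke rapidity to get an increasing $f>g$ with $f[\omega]\in\F$, and close with a density argument --- but the engine you propose to drive the contradiction does not exist in this setting. Lemma \ref{strongly-atomless} is a statement about a \emph{ground-model} measure that is assumed to be \emph{silver}; in Theorem \ref{rapid} the measure $\mu$ is an arbitrary measure living in $V[G]$ that merely extends $\F$, and neither hypothesis is available. In particular there is no way to ``push the measure of prescribed unions of $D_i(\dot{s}_k)$ close to $1$ or close to $0$ on demand'': extending a condition changes the sets $D_i(\dot{s}_k)$, but the name $\dot{\mu}$ can react to that arbitrarily. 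For the same reason your plan to ``build a decreasing sequence $\dot{Z}_n$ out of the generic data whose measures tend to some $c>0$'' cannot be carried out in general, and the parenthetical justification (``using $\mu(F)=1$ for $F\in\F$ together with the rapid gaps'') is empty: $\F$ says nothing about the measures of the generic sets, and it may perfectly well be forced that every intersection $\bigcap_{k\in[n,m)}D_0(\dot{s}_k)$ has arbitrarily small measure. The paper's proof is organized around exactly this dichotomy: if some tail of intersections of the $D_0(\dot{s}_k)$ has measure bounded below by some $\varepsilon$, one shows every pseudointersection is forced to have measure $0$; in the complementary case one must dualize and work with pseudo-\emph{unions} of the small blocks $M_n=\bigcap_{k\in[h(n),h(n+1))}D_0(\dot{s}_k)$, showing every pseudo-union is forced to have measure $1$ while the P-measure property demands one of measure $<1/2$. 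Your uniform ``$\dot{Z}$ misses a big set'' shape fits only the first case; the proposal never specifies what the decreasing sequence is, nor which way the contradiction goes, in the other.

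The second gap is the ``blocks $F_n$ of substantial total measure.'' In Theorem \ref{Nearly-no} those blocks have known measure because the ground-model measure is silver, is itself a P-measure (Proposition \ref{P-needed}), and Lemma \ref{strongly-atomless} certifies the sets it is applied to. Here there is no ground-model measure at all: the only ground-model sets whose $\mu$-measure you can certify are members of $\F$ (measure $1$) and their complements (measure $0$); every finite block has measure $0$, so a family of finite blocks can be guaranteed positive total measure only if its union almost contains a member of $\F$. This is exactly the ``delicate point'' you flag and then defer --- but resolving it \emph{is} the proof. The resolution is that the blocks must be the singletons $\{f(n)\}$ of the rapid enumeration, so that $\bigcup_n F_n=f[\omega]\in\F$ has measure $1$ for free; and since $g(n)$ (hence $f(n)$) may be assumed to exceed $\min\Cod(p_k)$ for the relevant coordinates $k$, each individual point $f(n)$ can be steered by a coordinatewise extension of the condition into $D_0(\dot{s}_k)$ for all $k\in[h(n),h(n+1))$ (forcing $f[\omega]\subseteq\dot{Z}$, hence $\mu(\dot{Z})=1$, in the pseudo-union case) or out of $D_0(\dot{s}_k)$ (forcing $f[\omega\setminus N]\cap\dot{Z}=\emptyset$, hence $\mu(\dot{Z})=0$, in the bounded-below case). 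Without this singleton-steering idea the proposal has no mechanism for producing a set of certified positive measure that $\dot{Z}$ is forced to miss, so as written the argument does not go through.
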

\begin{proof}
Suppose that $\mu$ is a measure extending $\F$ in $V[G]$. Let $D_i(s)$ be as above.

	There are two cases: either for every $\eps>0$ and every $n$ there is $m>n$ such that  $ \mu \left(\bigcap_{n\leqslant k<m} D_0(s_k)\right)<\eps$ or there are $\eps>0$, $n$ such that for any $m>n$ we have $\mu\left(\bigcap_{n\leqslant k<m}D_0(s_k)\right)>\eps$. 
\bigskip

	\textbf{Case 1.}\;\; Using the $\omega^\omega$-bounding property we can choose an increasing function $h$ from $V$ so that $n\in \omega$ \[ \mu\left(\bigcap_{k\in[h(n),h(n+1))} D_0(s_k)\right)<\frac{1}{2^{n+2}}.\]  We put \[ M_n=\bigcap_{k\in[h(n),h(n+1))}D_0(s_k).\]

Let $p=\langle p_0,p_1,...\rangle $ be any condition forcing all the above and such that 
$$
 p\Vdash    \forall n \;\dot Z\cup g(n) \supseteq \dot M_n  
$$
	for some $\dot Z, g$ (again $g\in V$ by $\omega^\omega$-bounding and perhaps extending $p$). We want to show that $\mu (Z)=1$. 

We can assume (by potentially increasing $g$) that for each $n$ we have
\[
 \forall k\in[h(n),h(n+1)) \quad g(n)>\min \Cod (s_k).
\]
Now let $f>g$ be an increasing function such that $f[\omega]\in\F$.
We have $p\forc\dot Z \supseteq \dot M_n \setminus f(n)$.

Let $q=\Anp{q_0,q_1,...} \leqslant p$ be a condition such that for any $n$ $$\forall k\in [h(n),h(n+1)) \; q\forc f(n)\in \dot D_0(\dot s_k).$$
This can be done because $f(n)>\min \Cod(p_k)$. 

Now $q$ forces that $f(n)\in \dot M_n$ and so $f(n)\in \dot Z$.
We can conclude that $\dot Z \in \F$ and so 
\[ q\forc \dot\mu\dot Z =1 \;\land \; \sum_n\dot\mu \dot M_n < \frac{1}{2}\]
where $\dot Z$ was a name for an arbitrary pseudounion of $M_n$.

	\textbf{Case 2.} Choose $N$ and $\eps$ such that for every $m>N$ we have
$$\mu\left(\bigcap_{k\in[N,m)}D_0(s_k)\right)>\eps.$$

Take any $p=\langle p_0,p_1,...\rangle $ forcing all the above and such that (for some $\dot Z,g$)
$$
    p \Vdash \forall k\geqslant N\; \dot Z\setminus g(k) \subseteq \dot D_0(\dot s_{k}).  
$$
Again assume that $g(k)>\min\Cod(p_{k})$ and let $f>g$ such that $f[\omega\setminus N]\in \F$.

Now we can choose $q \leqslant p$ such that for any $k\geqslant N$ 
$$
q \forc f(k)\not \in \dot D_0(\dot s_k).
$$

	Now we can see that $q \forc \dot\mu\dot (Z) = 0 \;\land\; \lim_m \dot\mu \left(\bigcup_{k\in[N,m)}\dot D_0(\dot s_{k}) \right) \geqslant \eps.$ 
\end{proof}
Finally, we can arrive at the following corollary.
\begin{cor}
    If $\F$ is a rapid filter in $V$, $V[G]$ is a model obtained by forcing with $\SSi_\omega$ and $W\supseteq V[G]$ is a model with the $\omega^\omega$-bounding property over $V[G]$ then $\F$ cannot be extended to a P-measure in $W$. 
\end{cor}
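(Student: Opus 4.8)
The plan is to deduce the statement from Theorem~\ref{nopmeasure} by pushing a hypothetical P-measure down from $W$ to $V[G]$ with the help of the $\omega^\omega$-bounding property of $W$ over $V[G]$. So suppose toward a contradiction that $\F$ can be extended to a P-measure $\mu'$ in $W$, i.e.\ $\mu'(F)=1$ for every $F\in\F$, and let $\mu$ be the restriction of $\mu'$ to those subsets of $\omega$ that belong to $V[G]$. Since $\F\subseteq V\subseteq V[G]$, the measure $\mu$ still assigns value $1$ to every element of $\F$.

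First I would check that $\mu$ has the P-measure property for sequences from $V[G]$, by running the argument of Proposition~\ref{omompmeas} with $V[G]$ in place of the ground model. Given a $\subseteq$-decreasing sequence $(A_n)\in V[G]$, the P-measure $\mu'$ yields, via Proposition~\ref{pmeasfunc}, a function $f\in W$ such that $A=\bigcup_n\bigl(A_n\cap[f(n),f(n+1))\bigr)$ satisfies $\mu'(A)=\lim_n\mu'(A_n)$. Using the $\omega^\omega$-bounding of $W$ over $V[G]$ we pick $\tilde f\in V[G]$ with $\tilde f\geq f$; then $A'=\bigcup_n\bigl(A_n\cap[\tilde f(n),\tilde f(n+1))\bigr)$ lies in $V[G]$, is a pseudointersection of $(A_n)$, and the sandwiching $\lim_n\mu(A_n)\geq\mu'(A')\geq\mu'(A)=\lim_n\mu(A_n)$ gives $\mu(A')=\lim_n\mu(A_n)$. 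Hence every decreasing $V[G]$-sequence admits a $V[G]$-pseudointersection of the correct $\mu$-measure.

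The crucial remaining point, which I expect to be the only genuine obstacle, is that this reflected object is honestly a measure living in $V[G]$ rather than a function defined only in $W$: a priori $\mu=\mu'\clrest V[G]$ is computed using $\mu'\in W$, and a simple restriction need not be captured at the $V[G]$ level. This is exactly where the hypothesis on $W$ must be used, and in the iterated setting it is the role of the capturing provided by Proposition~\ref{extending}, which secures a $V[G_\alpha]$-name for $\dot\mu\cap V[G_\alpha]$. Once $\mu\in V[G]$ is established, the previous paragraph shows that $\mu$ is a P-measure in $V[G]$ extending the rapid filter $\F$, contradicting Theorem~\ref{nopmeasure}, which asserts that $\SSi_\omega$ forces that no P-measure extends $\F$. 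The reflection of pseudointersections and the compatibility with $\F$ being routine, the whole difficulty is concentrated in verifying that the bounding property really places the reflected measure inside $V[G]$.
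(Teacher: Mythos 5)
Your reduction stands or falls on the claim that $\mu=\mu'\upharpoonright\mathcal{P}(\omega)^{V[G]}$ is an \emph{element} of $V[G]$, and this is exactly where the proposal breaks down: the $\omega^\omega$-bounding property of $W$ over $V[G]$ provides no such capturing. Bounding controls functions from $\omega$ to $\omega$, not objects of the size of the continuum. For instance, if $W$ is a random extension of $V[G]$ (random forcing is $\omega^\omega$-bounding), fix in $V[G]$ a partition $(A_n)$ of $\omega$ into infinite sets and, for each $n$, two distinct ultrafilters $\mathcal{U}_n^0\neq\mathcal{U}_n^1$ concentrated on $A_n$; for a new real $r\in 2^\omega\cap W$ the measure $\mu'=\sum_n 2^{-n-1}\delta_{\mathcal{U}_n^{r(n)}}$ has the property that $\mu'\upharpoonright V[G]$ computes $r$, so it is not in $V[G]$. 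The appeal to Proposition~\ref{extending} cannot close this hole either: that proposition reflects a \emph{name} for a measure at an intermediate stage of a specific countable support $\omega_2$-iteration, whereas here $W$ is an arbitrary outer model with the bounding property and need not be a forcing extension of $V[G]$ at all, so there are no names to reflect. Since Theorem~\ref{nopmeasure} speaks only of P-measures that are elements of $V[G]$, your black-box application of it never gets off the ground; the first part of your argument (reflecting the P-property to $V[G]$-sequences, following Proposition~\ref{omompmeas}) is correct but moot without membership of $\mu$ in $V[G]$.

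The argument that works does not push the measure down at all: one re-runs the \emph{proof} of Theorem~\ref{nopmeasure} with $\mu'$ staying in $W$. The witnessing sequences there, $M_n=\bigcap_{k\in[h(n),h(n+1))}D_0(s_k)$ in Case 1 and $\bigcap_{k\in[N,m)}D_0(s_k)$ in Case 2, lie in $V[G]$ as soon as $h\in V$, and the only inputs taken from the P-measure are (i) measure values, used for the case dichotomy and the final contradiction, which may be computed in $W$, and (ii) the dominating functions $h$ and $g$ associated with the pseudo-union/pseudo-intersection $Z\in W$, which can be taken in $V$ because bounding composes: $W$ is bounding over $V[G]$ and $V[G]$ is bounding over $V$ (as $\SSi_\omega$ is $\omega^\omega$-bounding), hence $W$ is bounding over $V$. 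Genericity of $G$ over $V$ --- via the same density construction as in the proof of the theorem, using rapidity of $\F$ --- then yields $f\in V$ with $f>g$ and $f[\omega]\in\F$ such that $f(n)\in M_n$ for all $n$ (respectively $f(k)\notin D_0(s_k)$ for $k\geq N$). Since $M_n\setminus g(n)\subseteq Z$ (respectively $Z\setminus g(k)\subseteq D_0(s_k)$), this gives $f[\omega]\subseteq Z$, so $\mu'(Z)=1$ against $\mu'(Z)<1/2$ (respectively $Z\cap f[\omega\setminus N]=\emptyset$, so $\mu'(Z)=0$ against $\mu'(Z)\geq\varepsilon$), a contradiction derived entirely inside $W$.
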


\section{More on P-measures under CH} \label{themeasure}

In this section, we enclose several remarks on measures on $\omega$.

\begin{exa}\label{Random-in-omega} Let $\mathbb{B}$ be the measure algebra (i.e. $\mathbb{B} = \mathrm{Bor}(2^\omega)_{/\lambda = 0}$) and let $K$ be its Stone space. Then, by Sikorski extension theorem (see e.g. \cite[Lemma 1.1]{Kunen}), $K$ can be embedded homeomorphically into
	$\omega^*$. Of course, $K$ supports the Lebesgue measure, and this measure induces naturally a measure $\lambda$ on $\omega$. 
\end{exa}

What is interesting about the measure $\lambda$ from Example \ref{Random-in-omega} is that it is combinatorially far from the density measures, in particular from the atomless measures described in Section \ref{measures-on-omega}. The reason lies in the Maharam type of this measure.  

Namely, the measure from Example \ref{Random-in-omega} has the same Maharam type as the Lebesgue measure, i.e. countable, whereas density measures have uncountable Maharam types. The latter fact follows from \cite[1J]{FremlinTalagrand}, but we will give a more direct argument here, due to Grzegorz Plebanek. 
\begin{thm} Every density measure $\mu$ has the Maharam type $\mathfrak{c}$.
\end{thm}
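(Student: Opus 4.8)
The plan is to produce a family $(A_x)_{x\in 2^\om}$ of subsets of $\om$, of size $\cc$, that is uniformly separated in the pseudometric $d_\mu(A,B)=\mu(A\triangle B)$, and then invoke the observation (made just before Theorem~\ref{Maharam}) that a family of sets pairwise $d_\mu$-separated by a fixed $c>0$ forces Maharam type at least its cardinality. Since $|\P(\om)|=\cc$ the type is also at most $\cc$, so this pins it to exactly $\cc$. The one structural fact I will use about a density measure is that it dominates lower asymptotic density: writing $\mu(B)=\int d'_B\,d\mu'$ as in Example~\ref{example2}, the continuous function $d'_B\colon\om^*\to[0,1]$ takes at each ultrafilter a cluster value of the sequence $\big(|B\cap n|/n\big)_n$, and every cluster value is at least $\underline d(B):=\liminf_n |B\cap n|/n$. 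Hence $d'_B\ge \underline d(B)$ everywhere on $\om^*$, so
\[ \mu(B)=\int d'_B\,d\mu'\ \ge\ \underline d(B)\qquad\text{for every }B\subseteq\om. \]

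Next I would set up the index structure. Partition $\om$ into consecutive blocks $(I_n)$ with $|I_n|=2^n$ and fix, for each $n$, a bijection of $I_n$ with $2^n$. For $x\in 2^\om$ put
\[ A_x\cap I_n=\{\sigma\in 2^n:\textstyle\sum_{i<n}\sigma(i)x(i)\equiv 0\ (\mathrm{mod}\ 2)\}. \]
The reason for the mod-$2$ pairing is that for $x\neq y$ first differing at coordinate $m$, and every $n>m$, the string $(x\oplus y)\restriction n$ is nonzero, so
\[ (A_x\triangle A_y)\cap I_n=\{\sigma\in 2^n:\textstyle\sum_{i<n}\sigma(i)(x\oplus y)(i)\equiv 1\} \]
is an affine hyperplane in $2^n$ and therefore has exactly $2^{n-1}$ elements. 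Thus every block beyond the $m$-th contributes precisely half of its points to $A_x\triangle A_y$.

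Then I would convert ``half of every block'' into a lower bound on $\underline d(A_x\triangle A_y)$. Because the blocks grow geometrically, at the right end of block $I_n$ the proportion of points of $A_x\triangle A_y$ seen so far tends to $1/2$; the only thing to control is the oscillation inside a single block, where the fixed enumeration of $I_n$ could list all of its non-symmetric-difference points first. A direct count (prefix total $\approx 2^n$ carrying $\approx 2^{n-1}$ points of the symmetric difference, plus at most $2^{n-1}$ fresh non-symmetric-difference points) shows that the running ratio never drops below $1/3$ in the limit. Hence $\underline d(A_x\triangle A_y)\ge 1/3$ for all $x\neq y$, and by the domination above $\mu(A_x\triangle A_y)\ge 1/3$. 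Viewing each $A_x$ as a clopen (in particular Borel) subset of $\om^*$, this is the $c$-separated family of size $\cc$ that we wanted, with $c=1/4$.

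The hyperplane count and the geometric-series estimate are routine; the only genuinely delicate point, and the one I would write out in full, is the within-block lower-density estimate, since the enumeration of $I_n$ is fixed once and for all while the ``bad half'' $\{\sigma:\sum_{i<n}\sigma(i)(x\oplus y)(i)\equiv 1\}$ depends on the pair $(x,y)$. What rescues uniformity is the elementary observation that for \emph{any} enumeration of $2^n$ and \emph{any} nonzero $v\in 2^n$, among the first $t$ listed strings at least $\max(0,\,t-2^{n-1})$ lie in the corresponding affine hyperplane; combined with the $\approx 1/2$ prefix ratio this yields the uniform $1/3$ bound. I would isolate this single inequality as the crux and let the rest follow formally.
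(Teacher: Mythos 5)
Your construction itself is sound: the hyperplane count ($2^{n-1}$ points of each block in every symmetric difference), the uniform prefix estimate, and the resulting bound $\underline{d}(A_x\triangle A_y)\ge 1/3$ are all correct, and for the integral-form measures of Example~\ref{example2} the domination $\mu(B)\ge\underline{d}(B)$ does hold, so you have a complete elementary proof \emph{for that subclass}. The gap is one of scope. In this paper (and in the literature it follows), a \emph{density measure} means any finitely additive measure extending the asymptotic density $d$: the paper's own proof concludes ``every measure extending the asymptotic density has Maharam type $\mathfrak{c}$'', and the paragraph right after the theorem uses exactly this generality to deduce that the measure $\lambda$ of Example~\ref{Random-in-omega} does not extend $d$. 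For a general extension of $d$ your one structural fact fails: such a measure is pinned down only on sets whose density \emph{exists}, and on sets without a density it can take values strictly below the lower density. Your sets $A_x\triangle A_y$ are precisely of this kind: with an adversarial block enumeration their prefix ratios genuinely oscillate between $1/3$ and $1/2$, so ``extends $d$'' by itself gives no lower bound on their measure.

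Concretely, enumerate each $I_n\cong 2^n$ lexicographically with coordinate $0$ most significant, and take $x\oplus y=(1,0,0,\dots)$. Then $D=A_x\triangle A_y$ is (mod finite) the union of the second halves of the blocks, i.e.\ $D=\bigcup_n[\tfrac{3}{2}2^n,2^{n+1})$ up to a shift. Any bounded Ces\`aro-convergent $v$ with $v\le \mathbf{1}_D+o(1)$ pointwise has Ces\`aro limit $\le 0$: averaging over $[0,3\cdot 2^n)$ and using that $D$ misses the interval $[2^{n+1},3\cdot 2^n)$, which is one third of that range, gives $M\le\tfrac{2}{3}M$ for the limit $M$. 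By Hahn--Banach (extending the Ces\`aro functional below the sublinear functional $\limsup$) there is therefore a finitely additive measure extending $d$ with $\mu(D)=0$, and your separation claim fails for it at this pair. The paper's proof avoids this exact trap: the uniformly distributed sequence (Fremlin 491G) produces $\mathfrak{c}$ many sets whose pairwise symmetric differences have \emph{exact} asymptotic density $1/4$, so every extension of $d$ is forced to assign them $1/4$, and no domination fact is needed. Your construction could be repaired in the same spirit --- choose the bijections $I_n\cong 2^n$ with discrepancy $o(2^n)$ against every affine hyperplane (a random enumeration works), so that each $A_x\triangle A_y$ has exact density $1/2$ --- but as written the argument proves the theorem only for the Example~\ref{example2} measures, not for all density measures.
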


\begin{proof}
	Let $\lambda_{\mathfrak{c}}$ be the standard Haar measure on $2^\mathfrak{c}$. By \cite[491G]{Fremlin-MT4} $\lambda_\mathfrak{c}$ has a uniformly distributed sequence $(x_n)$, which in particular means that the function $\varphi\colon \mathrm{Clop}(2^\mathfrak{c}) \to \mathcal{P}(\omega)$ defined
	by $\varphi(C) = \{n\colon x_n \in C\}$ is a Boolean homomorphism such that $d(\varphi(A))=\lambda_\mathfrak{c}(A)$ for each $A\in \mathrm{Clop}(2^\mathfrak{c})$. For $\alpha<\mathfrak{c}$ let $C_\alpha = \{x\in 2^\mathfrak{c}\colon x(\alpha)=1\}$ and let
	$A_\alpha = \varphi(C_\alpha)$. Then $(A_\alpha)$ forms a family such that $d(A_\alpha)=1/2$ and $d(A_\alpha \triangle A_\beta) = 1/4$ for $\alpha, \beta< \mathfrak{c}$, $\alpha\ne \beta$. If $\mu$ extends $d$, then
	$\mu(A_\alpha \triangle A_\beta)>1/4$ for each $\alpha<\mathfrak{c}$  So, every measure extending the asymptotic density has Maharam type $\mathfrak{c}$.
\end{proof}

The above theorem says, informally, that the asymptotic density has 'Maharam type` $\mathfrak{c}$. It follows that the measure $\lambda$ from Example \ref{Random-in-omega} does not extend the asymptotic density.

We will show that under $CH$ we can construct measures as in Example \ref{Random-in-omega} with some additional properties. We start with a theorem by Kunen. 

\begin{thm}\label{Kuna} (\cite[Theorem 1.2]{Kunen}). 
	Assume $\mathsf{CH}$. Then the space $K = \mathrm{St}(\mathbb{B})$ can be embedded into $\omega^*$ as a P-set. In other words, there is a surjective Boolean homomorphism $\varphi\colon \mathcal{P}(\omega)/fin \to \mathbb{B}$ whose kernel is a P-ideal. 
\end{thm}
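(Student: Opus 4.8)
The embedding of $K=\mathrm{St}(\mathbb{B})$ into $\omega^*$ on its own is precisely the content of the Sikorski extension theorem already invoked in Example \ref{Random-in-omega}: one fixes the countable atomless dense subalgebra $\mathbb{B}_0=\mathrm{Clop}(2^\omega)\subseteq\mathbb{B}$, realizes it inside $\mathcal{P}(\omega)/fin$ by means of a uniformly distributed sequence $(x_n)$ in $2^\omega$ (sending a clopen $C$ to the class of $\{n\colon x_n\in C\}$, exactly as in the proof that density measures have type $\mathfrak{c}$), and extends this into the complete algebra $\mathbb{B}$ to obtain a surjective homomorphism $\varphi\colon\mathcal{P}(\omega)/fin\to\mathbb{B}$. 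The genuinely new content under $\mathsf{CH}$ is that $\varphi$ can be arranged so that its kernel $\mathcal{I}$ is a P-ideal; dually this says exactly that the image $K$ sits in $\omega^*$ as a P-set. So the plan is to reconstruct Kunen's argument as a transfinite recursion of length $\omega_1$ that builds $\varphi$ while forcing the P-ideal property on $\mathcal{I}$.

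Since $\mathsf{CH}$ gives $|\mathbb{B}|=\omega_1$, I would enumerate $\mathbb{B}=\{b_\alpha\colon\alpha<\omega_1\}$ and build a continuous increasing chain of countable subalgebras $\mathbb{A}_\alpha\le\mathcal{P}(\omega)/fin$ together with compatible surjections $\varphi_\alpha\colon\mathbb{A}_\alpha\to\mathbb{B}_\alpha$ onto an increasing chain of countable subalgebras $\mathbb{B}_\alpha$ exhausting $\mathbb{B}$, with $\varphi_\beta\supseteq\varphi_\alpha$ for $\alpha<\beta$. At successor stages I would alternate two kinds of task. A \emph{surjectivity} task adjoins to $\mathbb{A}_\alpha$ a chosen preimage of $b_\alpha$, so that $\bigcup_\alpha\varphi_\alpha$ is onto. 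A \emph{P-ideal} task selects a pseudo-union $A$ of the countable ideal $\ker\varphi_\alpha$ — concretely $A=\bigcup_k (A_k\setminus n_k)$ for a rapidly growing sequence $n_k$, in the spirit of Proposition \ref{pmeasfunc} — and declares $A\in\ker\varphi_{\alpha+1}$. Because the chain is continuous and each $\ker\varphi_\alpha$ is countable, every countable subfamily of the final kernel $\mathcal{I}=\bigcup_\alpha\ker\varphi_\alpha$ already appears inside some $\ker\varphi_\alpha$ by a standard reflection, so handling the full kernel at each stage is enough to make $\mathcal{I}$ a P-ideal.

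The engine that makes the P-ideal step possible — and the reason the measure algebra, rather than an arbitrary algebra of size $\omega_1$, can be placed as a P-set — is the countable additivity of Lebesgue measure on $\mathbb{B}$, which one exploits through the invariant that $\varphi_\alpha$ respects the relative density of $A$ along $(x_n)$, i.e.\ that $\lambda(\varphi_\alpha(A))$ matches that density. The main obstacle is reconciling the two tasks: the P-ideal requirement pushes the kernel to be large (it must absorb every countable subfamily), while surjectivity forbids it from being so large that it collapses $\mathbb{B}$, so no set carrying positive measure in $\mathbb{B}$ may be driven into the kernel. The delicate point is to choose the tails $n_k$ so that the adjoined pseudo-union $A$ is null with respect to the measure structure and, crucially, \emph{stays} null under all later finer partitions used to capture the elements $b_\beta$ for $\beta>\alpha$; since $\varphi$ is not order-continuous, this is not automatic and is where countable additivity (the union of countably many null elements being null) together with the freedom afforded by $\mathsf{CH}$ to interleave the $\omega_1$ requirements must be used to guarantee coherence. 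Verifying this coherence is the technical heart of the construction; once it is in place, $\varphi=\bigcup_\alpha\varphi_\alpha$ is a well-defined surjective homomorphism onto $\mathbb{B}$ whose kernel is, by the built-in pseudo-union steps, a P-ideal.
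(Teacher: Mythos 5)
Your skeleton --- a CH recursion of length $\omega_1$ built from one-step Sikorski extensions (Lemma \ref{Sikorski}), alternating a surjectivity task with a pseudo-union task for the kernel, and finishing with the reflection that any countable subfamily of the final kernel already lies in some $\ker\varphi_\alpha$ --- is the same induction the paper sketches (its conditions (1) and (2); the paper just phrases the P-requirement dually, via pseudo-intersections of sets with $\lambda(\varphi(\cdot))=1$). But there is a genuine gap: nothing in your construction makes $\varphi$ \emph{total}. Your domain is $\bigcup_\alpha\mathbb{A}_\alpha$, which contains preimages of all $b_\alpha$ and your adjoined pseudo-unions, but no task ever inserts an arbitrary $X\subseteq\omega$ (say, the set of even numbers) into it; under $\mathsf{CH}$ this union has cardinality $\mathfrak{c}$ but need not be all of $\mathcal{P}(\omega)/fin$. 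The theorem requires a surjection defined on all of $\mathcal{P}(\omega)/fin$ --- that is exactly what dualizes to an embedding of $K$ into $\omega^*$; a surjection from a proper subalgebra yields no embedding. Nor can you patch this at the end by one application of Sikorski's extension theorem: that extension creates new kernel elements never seen by your pseudo-union bookkeeping, so the P-ideal property is lost. The fix has to be a third, interleaved task: enumerate $\mathcal{P}(\omega)=\{X_\alpha\colon\alpha<\omega_1\}$ and at stage $\alpha$ adjoin $X_\alpha$ with some value $b$ satisfying $\varphi_*(X_\alpha)\leq b\leq \varphi^*(X_\alpha)$, so that the kernel elements this produces are caught by later pseudo-union tasks.

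This missing task is also where the special nature of the measure algebra actually enters, which corrects your diagnosis of the ``engine''. Within your two tasks, no property of $\mathbb{B}$ beyond being a Boolean algebra is used: adjoining a preimage of $b$ and adjoining a null pseudo-union are both diagonalizations against the countably many $\varphi_\alpha$-positive elements of $\mathbb{A}_\alpha$ (a positive image forces an infinite set), and for the pseudo-union one only needs $\varphi_*(A)=0$ \emph{at the moment of adjunction}: once $\varphi_{\alpha+1}(A)=0$ is declared, every later stage extends the homomorphism, so the value persists, and your worry about $A$ ``staying null under all later finer partitions'' is vacuous (likewise, the density-matching invariant is neither needed nor preservable once arbitrary preimages are adjoined). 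What genuinely requires $\mathbb{B}$ to be the measure algebra --- or at least $\sigma$-complete --- is the third task: for an arbitrary $X$ one needs some $b$ with $\varphi_*(X)\leq b\leq\varphi^*(X)$, and ($\sigma$-)completeness provides it, e.g.\ $b=\varphi_*(X)$, a countable supremum computed in $\mathbb{B}$. This is not pedantry: for $\mathbb{B}=\mathrm{Clop}(2^\omega)$ that interval can be empty, the recursion dies, and the conclusion is in fact false, since $2^\omega$ has nontrivial convergent sequences and hence does not embed into $\omega^*$ at all. For the same reason, the claim in your first paragraph that the Sikorski extension of the uniformly-distributed-sequence homomorphism is automatically surjective is unjustified --- Sikorski gives no control of the range; surjectivity, totality and the P-ideal property all have to be earned simultaneously by the $\omega_1$-bookkeeping.
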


Now we will show that if $\mathbb{B}$ is embedded as above, then the induced measure $\lambda$ is a P-measure.

\begin{prop} Assume that $\mathbb{B}$ is a Boolean algebra supporting a $\sigma$-additive measure $\nu$. Suppose that $\varphi\colon \mathcal{P}(\omega)/fin \to \mathbb{B}$ is a surjective Boolean homomorphism such that $\mathrm{ker}(\varphi)$ is a P-ideal. Let $\mu$ be a measure on $\omega$ defined by $\mu(A) =
	\nu(\varphi(A))$. Then $\mu$ is a P-measure.
\end{prop}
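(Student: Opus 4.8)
The plan is to use the surjectivity of $\varphi$ to pull a natural candidate out of $\mathbb{B}$, and then to use the P-ideal structure of $\mathrm{ker}(\varphi)$ to convert a ``null pseudointersection'' into a genuine one. First I would check that $\mu$ really is a finitely additive probability measure vanishing on points: finite sets are $0$ in $\mathcal{P}(\omega)/\mathit{fin}$, hence lie in $\mathrm{ker}(\varphi)$ and get $\mu$-measure $0$; and for disjoint $A,B$ the classes $[A],[B]$ are disjoint in $\mathcal{P}(\omega)/\mathit{fin}$, so additivity of $\nu$ and the homomorphism property of $\varphi$ give $\mu(A\cup B)=\mu(A)+\mu(B)$.

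Now fix a decreasing sequence $(A_n)$ and set $a=\lim_n\mu(A_n)=\inf_n\mu(A_n)$. Since $\varphi$ is a Boolean homomorphism, $\big(\varphi(A_n)\big)_n$ is decreasing in $\mathbb{B}$; using that $\mathbb{B}$ is $\sigma$-complete, the infimum $b=\bigwedge_n\varphi(A_n)$ exists, and continuity from above of the $\sigma$-additive measure $\nu$ yields $\nu(b)=\lim_n\nu(\varphi(A_n))=a$. By surjectivity of $\varphi$ I would choose $B\subseteq\omega$ with $\varphi(B)=b$, so that $\mu(B)=\nu(b)=a$. Because $b\le\varphi(A_n)$ for every $n$, we get $\varphi(B\setminus A_n)=\varphi(B)\wedge\neg\varphi(A_n)=0$, i.e. $B\setminus A_n\in\mathrm{ker}(\varphi)$ for all $n$.

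At this point $B$ has the correct measure and sits below each $A_n$ modulo the ideal $\mathrm{ker}(\varphi)$, but the sets $B\setminus A_n$ may still be infinite, so $B$ itself need not be a pseudointersection. This is exactly where the hypothesis that $\mathrm{ker}(\varphi)$ is a P-ideal enters, and it is the crux of the argument: applying the P-ideal property to the sequence $(B\setminus A_n)_n\subseteq\mathrm{ker}(\varphi)$ produces a single $N\in\mathrm{ker}(\varphi)$ with $B\setminus A_n\subseteq^* N$ for every $n$.

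Finally I would set $Z=B\setminus N$. For each $n$ we have $Z\setminus A_n\subseteq(B\setminus A_n)\setminus N$, which is finite since $B\setminus A_n\subseteq^* N$; hence $Z\subseteq^* A_n$ and $Z$ is a pseudointersection of $(A_n)$. Moreover $N\in\mathrm{ker}(\varphi)$ forces $\mu(N)=\nu(\varphi(N))=\nu(0)=0$, so by finite additivity and monotonicity $\mu(Z)=\mu(B)-\mu(B\cap N)=\mu(B)=a=\lim_n\mu(A_n)$. Thus $Z$ witnesses the P-measure condition for $(A_n)$, and since the sequence was arbitrary, $\mu$ is a P-measure. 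The only genuinely delicate point is the passage from ``$B\setminus A_n$ is $\mu$-null (in $\mathrm{ker}(\varphi)$)'' to ``$Z\setminus A_n$ is finite,'' which the P-ideal hypothesis is tailor-made to handle; the remaining steps are bookkeeping with $\varphi$ and the continuity of $\nu$.
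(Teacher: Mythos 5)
Your proof is correct and follows essentially the same route as the paper's: pull the infimum $b=\bigwedge_n\varphi(A_n)$ back through the surjection to obtain $B$ of the right measure, then use the P-property of $\mathrm{ker}(\varphi)$ to repair $B$ into a genuine pseudointersection of measure $\lim_n\mu(A_n)$. The only cosmetic difference is that you argue directly with the P-ideal $\mathrm{ker}(\varphi)$ while the paper argues with the dual P-filter $\{N\subseteq\omega\colon \varphi(N)=1\}$; both arguments (yours explicitly, the paper's implicitly) also use that the countable infimum of the $\varphi(A_n)$ exists in $\mathbb{B}$, which holds in the intended case of a measure algebra.
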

\begin{proof}  Let $\mathcal{F} = \{N\subseteq \omega\colon \varphi(N)=1\}$. Notice that $\mathcal{F}$ is a P-filter and that $\mathcal{F} = \{N\subseteq \omega\colon  \mu(N)=1\}$.
	Let $(A_n)$ be a decreasing sequence of subsets of $\omega$ and let $B_n = \varphi(A_n)$. Then $(B_n)$ is a decreasing sequence of elements of $\mathbb{B}$. There is $B\in \mathbb{B}$ such that $\lambda(B) = \lim_{n\to \infty}
	\lambda(B_n)$.

	Let $A\subseteq \omega$ be such that $\varphi(A) = B$. Then $\mu(A \setminus A_n)=0$ and so \[ A'_n =  (A \cap A_n) \cup A^c  \in \mathcal{F} \] for each $n$. As $\mathcal{F}$ is a P-filter, there is $N\in \mathcal{F}$ such that $N \subseteq^* A'_n$
	for every $n$. Then $N\cap A \subseteq^* A_n$ for every $n$ and $\mu(N\cap A) = \lim_{n\to \infty}\mu(A_n)$. As $(A_n)$ is arbitrary, $\mu$ is a P-measure.
\end{proof}

As a corollary of the above and of Theorem \ref{Kuna} we get the following. 

\begin{cor} Under $\mathsf{CH}$ there is a P-measure of countable Maharam type. 
\end{cor}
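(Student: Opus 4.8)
The plan is to read the statement off directly from Theorem \ref{Kuna} together with the Proposition immediately preceding it. First I would set $\mathbb{B} = \mathrm{Bor}(2^\omega)_{/\lambda=0}$, the measure algebra, which carries the $\sigma$-additive Lebesgue measure $\nu$. Assuming $\mathsf{CH}$, Theorem \ref{Kuna} provides a surjective Boolean homomorphism $\varphi\colon \mathcal{P}(\omega)/\mathrm{fin} \to \mathbb{B}$ whose kernel is a P-ideal. The preceding Proposition then applies verbatim: the measure $\mu$ on $\omega$ given by $\mu(A) = \nu(\varphi(A))$ is a P-measure.

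It remains to verify that $\mu$ has countable Maharam type. The key point is that $\varphi$ preserves the measure pseudometric. Indeed, since $\varphi$ is a Boolean homomorphism we have $\varphi(A \triangle B) = \varphi(A) \triangle \varphi(B)$, hence
\[ d_\mu(A,B) = \mu(A \triangle B) = \nu\big(\varphi(A) \triangle \varphi(B)\big) = d_\nu\big(\varphi(A), \varphi(B)\big) \]
for all $A, B \subseteq \omega$. Thus $A \mapsto \varphi(A)$ is a distance-preserving surjection of $(\mathcal{P}(\omega), d_\mu)$ onto $(\mathbb{B}, d_\nu)$, so the two pseudometric spaces have equal density. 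Since the Lebesgue measure algebra $\mathbb{B}$ is separable --- equivalently, $\nu$ has countable Maharam type --- it follows that the Maharam type of $\mu$ is countable, completing the proof.

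I do not expect any genuine obstacle: the corollary is an immediate combination of the two quoted results, and the only thing to be spelled out is the Maharam-type computation, which reduces to the elementary observation that a surjective, measure-distance-preserving homomorphism transfers the density of the measure metric. One could equally note that $\mu$ is exactly the measure $\lambda$ of Example \ref{Random-in-omega} built from Kunen's embedding, and simply invoke the remark there that it shares the countable Maharam type of the Lebesgue measure on $2^\omega$.
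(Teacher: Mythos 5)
Your proposal is correct and follows exactly the paper's route: under $\mathsf{CH}$ apply Theorem \ref{Kuna} to get the surjection $\varphi\colon \mathcal{P}(\omega)/fin \to \mathbb{B}$ with P-ideal kernel, invoke the preceding proposition to see that $\mu = \nu\circ\varphi$ is a P-measure, and observe that $\mu$ inherits the countable Maharam type of the Lebesgue measure. The paper leaves the Maharam-type transfer implicit (it is asserted earlier, in the discussion after Example \ref{Random-in-omega}), whereas you spell out the measure-distance-preserving surjection argument, which is a correct and welcome elaboration of the same idea.
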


We will show that we can strengthen Kunen's construction to obtain a measure that is not nearly Dirac. To do it we have to take a closer look at the proof of Theorem \ref{Kuna}. Let $\mathcal{A} \subseteq P(\omega)$ be a family closed under finite
modifications. For a homomorphism $\psi\colon \mathcal{A} \to
\mathbb{B}$ define $\psi^*\colon P(\omega) \to \mathbb{B}$ and $\psi_*\colon P(\omega)\to \mathbb{B}$ by 
\[ \psi^*(X) = \bigwedge \{\psi(A)\colon A\in \mathbb{A}, X\subseteq A\} \]
and
\[ \psi_*(X) = \bigvee \{\psi(A)\colon A\in \mathbb{A}, A\subseteq X\}. \]
The main tool in the proof of Theorem \ref{Kuna} is the following ingredient of the proof of Sikorski's theorem. By $\mathbb{A}(X)$ we will denote the Boolean algebra generated by $\mathbb{A}$ and $X$.

\begin{lem}\label{Sikorski} Let $\mathbb{A} \subseteq P(\omega)$ be a countable Boolean algebra (closed under finite modifications) and let $\psi\colon \mathbb{A} \to \mathbb{B}$ be a homomorphism. If $X\subseteq \omega$ and $B\in \mathbb{B}$ is such that $\psi_*(X) \subseteq B \subseteq
	\psi^*(X)$, then $\psi$ can be extended to a homomorphism $\psi'\colon \mathbb{A}(X) \to \mathbb{B}$ in such a way that $\psi'(X) = B$.
\end{lem}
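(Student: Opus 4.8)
The plan is to write an explicit formula for $\psi'$ on a normal form for the elements of $\mathbb{A}(X)$, and then verify in turn that it is well defined, that it is a Boolean homomorphism, and that it takes the prescribed values. First I would record the normal form: since $X$ and $X^c$ partition $\omega$, every element of $\mathbb{A}(X)$ can be written as $(A_1\cap X)\cup(A_2\setminus X)$ with $A_1,A_2\in\mathbb{A}$, and this family of sets is visibly closed under the Boolean operations (complementation sends it to $(A_1^c\cap X)\cup(A_2^c\setminus X)$, and unions are formed coordinatewise). Guided by the requirements $\psi'(X)=B$ and $\psi'\!\upharpoonright\!\mathbb{A}=\psi$, I would define
\[ \psi'\big((A_1\cap X)\cup(A_2\setminus X)\big) = (\psi(A_1)\wedge B)\vee(\psi(A_2)\wedge B^c). \]

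The main point, and the only place where the hypothesis $\psi_*(X)\subseteq B\subseteq\psi^*(X)$ is used, is well-definedness. Intersecting with $X$ and with $X^c$ shows that two normal forms $(A_1\cap X)\cup(A_2\setminus X)$ and $(A_1'\cap X)\cup(A_2'\setminus X)$ coincide exactly when $A_1\triangle A_1'\subseteq X^c$ and $A_2\triangle A_2'\subseteq X$. In the first case $C:=A_1\triangle A_1'\in\mathbb{A}$ satisfies $X\subseteq C^c$, so by definition of $\psi^*$ we get $B\leq\psi^*(X)\leq\psi(C^c)=\psi(C)^c$, whence $\psi(C)\wedge B=0$; since $\psi$ preserves symmetric differences this reads $(\psi(A_1)\triangle\psi(A_1'))\wedge B=0$, which is equivalent to $\psi(A_1)\wedge B=\psi(A_1')\wedge B$. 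Dually, in the second case $D:=A_2\triangle A_2'\subseteq X$ gives $\psi(D)\leq\psi_*(X)\leq B$, so $\psi(D)\wedge B^c=0$ and hence $\psi(A_2)\wedge B^c=\psi(A_2')\wedge B^c$. Combining these two equalities shows that the two defining expressions agree, so $\psi'$ is well defined.

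It then remains to dispatch the routine verifications. I would check $\psi'(X)=B$ by taking $A_1=\omega$ and $A_2=\emptyset$ (using $\psi(\omega)=1$ and $\psi(\emptyset)=0$), and $\psi'\!\upharpoonright\!\mathbb{A}=\psi$ by writing $A=(A\cap X)\cup(A\setminus X)$, so that $\psi'(A)=(\psi(A)\wedge B)\vee(\psi(A)\wedge B^c)=\psi(A)$. That $\psi'$ is a homomorphism follows from the normal form: preservation of unions is immediate from distributivity together with the fact that $\psi$ is a homomorphism, while for complements one computes
\[ \big[(\psi(A_1)\wedge B)\vee(\psi(A_2)\wedge B^c)\big]^c = (\psi(A_1)^c\vee B^c)\wedge(\psi(A_2)^c\vee B) \]
and expands the right-hand side, observing that the cross term $\psi(A_1)^c\wedge\psi(A_2)^c$ is absorbed into $(\psi(A_1)^c\wedge B)\vee(\psi(A_2)^c\wedge B^c)$, which is exactly $\psi'\big((A_1^c\cap X)\cup(A_2^c\setminus X)\big)$. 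Since complements and unions generate all the Boolean operations, $\psi'$ is a homomorphism with the required properties. I expect well-definedness to be the only genuinely delicate step, as it is precisely where the two-sided bound on $B$ is consumed; the homomorphism and value computations are mechanical.
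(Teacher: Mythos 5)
Your proof is correct and complete: the normal form $(A_1\cap X)\cup(A_2\setminus X)$, the definition of $\psi'$, the use of the two-sided bound $\psi_*(X)\leq B\leq\psi^*(X)$ exactly where well-definedness requires it, and the homomorphism verifications all check out. The paper states this lemma without proof, citing it as a known ingredient of Sikorski's extension theorem, and your argument is precisely that standard one-step extension argument, so there is nothing to reconcile.
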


Theorem \ref{Kuna} can be proved by transfinite induction, by taking care of the following conditions:
\begin{itemize}
	\item[(1)] for every $B\in \mathbb{B}$ there is $X\subseteq \omega$ such that $\varphi(X) = B$.
	\item[(2)] for every countable $\mathcal{A} \subseteq \mathcal{P}(\omega)$, such that $\lambda(\varphi(A_\alpha))=1$ for each $\alpha<\beta$, there is a pseudo-intersection $X\subseteq \omega$ of $\mathcal{A}$ such that
		$\lambda(\varphi(X))=1$.
\end{itemize}

\begin{prop}\label{not-nearly-Dirac} Assume $\mathsf{CH}$. There is a P-measure that is not nearly Dirac.
\end{prop}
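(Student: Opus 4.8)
The plan is to strengthen the transfinite construction behind Theorem \ref{Kuna} by adding one more family of requirements, each killing a potential Rudin--Blass reduction to a Dirac measure. By Fact \ref{rk-is-rb} and the standing convention that all ultrafilters are non-principal, a P-measure $\mu$ is nearly Dirac if and only if $\delta_\mathcal{U} \le_{RB} \mu$ for some $\mathcal{U}$, i.e. if and only if there is a finite-to-one $f\in\omega^\omega$ for which $\mu f^{-1}$ takes only the values $0$ and $1$ (indeed, since $f$ is finite-to-one and $\mu$ vanishes on finite sets, $\mu f^{-1}$ vanishes on points, so whenever it is $\{0,1\}$-valued it equals $\delta_\mathcal{U}$ for the non-principal ultrafilter $\mathcal{U}=\{X\colon\mu(f^{-1}[X])=1\}$). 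Hence it suffices to build the homomorphism $\varphi$ so that for every finite-to-one $f$ there is an $X$ with $0<\mu(f^{-1}[X])<1$. Since there are only $\mathfrak{c}=\omega_1$ finite-to-one functions, I would enumerate them as $(f_\alpha)_{\alpha<\omega_1}$ and diagonalize against them.

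I would run the $\omega_1$-step induction producing an increasing chain of countable subalgebras $\mathbb{A}_\alpha\subseteq\mathcal{P}(\omega)$ (closed under finite modifications) together with homomorphisms $\psi_\alpha\colon\mathbb{A}_\alpha\to\mathbb{B}$, extending by Lemma \ref{Sikorski} and taking unions at limits, while maintaining surjectivity (condition (1)), the pseudo-intersection condition (condition (2)), and totality of $\varphi$ exactly as in the proof of Theorem \ref{Kuna}; this guarantees that $\varphi=\bigcup_\alpha\psi_\alpha$ is onto with $\ker\varphi$ a P-ideal, so that $\mu(A)=\lambda(\varphi(A))$ is a P-measure by the proposition stating that $\mu(A)=\nu(\varphi(A))$ is a P-measure whenever $\ker\varphi$ is a P-ideal. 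The new requirement, handled at stage $\alpha$, is to produce an $f_\alpha$-saturated set $Y_\alpha$ (a union of fibers of $f_\alpha$) on which $\psi_\alpha$ is \emph{completely undetermined}, and then to use Lemma \ref{Sikorski} to set $\psi_{\alpha+1}(Y_\alpha)=B_\alpha$ for some $B_\alpha\in\mathbb{B}$ with $\lambda(B_\alpha)=1/2$. Since $Y_\alpha=f_\alpha^{-1}[f_\alpha[Y_\alpha]]$, this commits $\mu(f_\alpha^{-1}[f_\alpha[Y_\alpha]])=1/2$ and defeats $f_\alpha$.

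Everything rests on the following combinatorial step, which I expect to be the main point. Given the countable algebra $\mathbb{A}_\alpha$, the homomorphism $\psi_\alpha$, and the finite-to-one $f_\alpha$, I claim there is an $f_\alpha$-saturated $Y$ with $\psi_{\alpha*}(Y)=0$ and $\psi_\alpha^*(Y)=1$; for such $Y$ the interval $[\psi_{\alpha*}(Y),\psi_\alpha^*(Y)]$ is all of $[0,1]$, so Lemma \ref{Sikorski} permits $\psi_\alpha(Y)$ to be any element, in particular one of measure $1/2$. Unwinding the definitions of $\psi_\alpha^*$ and $\psi_{\alpha*}$, the two equalities say precisely that $Y\not\subseteq^* A$ for every $A\in\mathbb{A}_\alpha$ with $\psi_\alpha(A)\neq 1$ and $A\not\subseteq^* Y$ for every $A\in\mathbb{A}_\alpha$ with $\psi_\alpha(A)\neq 0$; equivalently, $Y$ meets $A^c$ infinitely whenever $\psi_\alpha(A)\neq 1$, and $Y^c$ meets $A$ infinitely whenever $\psi_\alpha(A)\neq 0$. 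This is a countable list of infinite target sets (each such $A$ or $A^c$ is infinite, since $\psi_\alpha(\omega)=1$ and $\psi_\alpha(\emptyset)=0$), and because $f_\alpha$ is finite-to-one each infinite target meets infinitely many fibers. A straightforward back-and-forth then builds $Y$ as a union of fibers: enumerating the requirements, at each step throw an as-yet-unused fiber meeting the relevant target into $Y$ or into $Y^c$ as dictated; only finitely many fibers are used at any stage and the targets are infinite, so a fresh fiber is always available. The resulting $Y$ is $f_\alpha$-saturated, and both $Y$ and $Y^c$ are infinite.

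The only remaining bookkeeping is to interleave the three families of requirements over the $\omega_1$ stages, which is routine under $\mathsf{CH}$ since each family has size $\omega_1$ and conditions (1), (2) (and totality of $\varphi$) are treated exactly as in Theorem \ref{Kuna}. Each requirement is realized by a single application of Lemma \ref{Sikorski} whose only constraint is the inclusion $\psi_{\alpha*}(Y_\alpha)\le B_\alpha\le\psi_\alpha^*(Y_\alpha)$, here trivial, so I do not foresee conflicts; the one point to verify is that committing $\psi_{\alpha+1}(Y_\alpha)=B_\alpha$ on a free set does not obstruct the later maintenance of the P-filter property, but since this merely records a new value in $\mathbb{B}$ it is absorbed by the standard argument. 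Thus $\mu=\lambda\circ\varphi$ is a P-measure that defeats every finite-to-one $f$, hence is not nearly Dirac.
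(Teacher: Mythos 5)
Your proposal is correct and follows essentially the same route as the paper: strengthen the transfinite construction of Theorem \ref{Kuna} by one extra requirement, handled at each stage by finding an $f$-saturated set that is free over the current countable algebra (your back-and-forth is the same induction the paper invokes to split every infinite $A\in\mathbb{A}$) and then fixing its $\varphi$-value at measure $1/2$ via Lemma \ref{Sikorski}. The reduction of ``not nearly Dirac'' to defeating each finite-to-one $f$ via Fact \ref{rk-is-rb} is also exactly the paper's argument.
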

\begin{proof}
	We proceed as explained above, adding one more condition on $\varphi$.
	\begin{itemize}
		\item[(3)] for every finite-to-1 function $f\colon \omega\to \omega$ there is $N\subseteq \omega$ such that 
			\[ \lambda(\varphi(f^{-1}[N])) =\frac{1}{2}. \]
	\end{itemize}
	It is immediate (with a little use of \ref{rk-is-rb}) that (3) implies that the measure $\mu$ defined by $\mu(X) = \lambda(\varphi(X))$ is not nearly Dirac.

	To achieve (3) suppose that $\mathbb{A}$ is countable and a function $f\colon \omega\to \omega$ is finite-to-1. By induction we may find $N\subseteq \omega$ such that
	\[ | A \cap f^{-1}[N] | = | A \setminus f^{-1}[N] | = \omega \]
	for every infinite $A\in \mathbb{A}$.

	It means that $\varphi_*(A \cap f^{-1}[N]) = 0$ and $\varphi^*( A \cap f^{-1}[N] ) = 1$ and we can use Lemma \ref{Sikorski}.
\end{proof}

\section{A model with P-measures and without P-points}

In this section, we will prove the following theorem. Denote \[ \mathbb{B}_\kappa = \mathrm{Bor}(2^\kappa)/_{\lambda_\kappa = 0},\] where $\lambda_\kappa$ is the standard Haar measure on $2^\kappa$. Recall that $\mathbb{B}_\kappa$ is the complete Boolean
algebra adding $\kappa$ random reals.

\begin{thm}\label{mmain} It is relatively consistent with $\mathsf{ZFC}$ that there is a P-measure,  there is no P-point and $2^{\aleph_0}$ is arbitrarily large.
\end{thm}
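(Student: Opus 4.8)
The plan is to construct the model as a two-step extension: start from a carefully chosen ground model $V$ satisfying $\mathsf{CH}$ in which every P-point is selective (an upgrade of Shelah's ``unique P-point''--type model, obtained by a countable support iteration of proper forcings killing all non-selective P-points while preserving a rich supply of selective ones, and arranging $2^{\aleph_0}$ as large as desired afterwards), and then force with the random algebra $\mathbb{B}_\kappa$ adding $\kappa$ random reals for an arbitrarily large $\kappa$. In the final model $V^{\mathbb{B}_\kappa}$ we must verify two things: that a P-measure exists, and that no P-point survives.

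For the existence of a P-measure, I would invoke the result from \cite{p_measures_random} quoted in the introduction: every P-point in $V$ can be extended to a P-measure in the random extension. Since the ground model $V$ is built to contain (selective, hence) P-points under $\mathsf{CH}$, fixing one such P-point $\mathcal{U}\in V$ and applying that theorem yields a P-measure $\mu$ in $V^{\mathbb{B}_\kappa}$. This step should be essentially a citation, so the real content lies elsewhere.

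The hard part will be showing there are no P-points in $V^{\mathbb{B}_\kappa}$. The strategy is the one outlined in the introduction: combine an upgrade of Kunen's theorem (no selective ultrafilter from $V$ can be extended to a P-point in a random extension) with the structural feature of $V$ that forces every potential P-point to come from a selective one. More precisely, I would argue in two layers. First, any P-point $\mathcal{W}$ in $V^{\mathbb{B}_\kappa}$ must, by a reflection/absoluteness argument exploiting that $\mathbb{B}_\kappa$ is measure-theoretically homogeneous and that a P-point is captured by countably many reals, essentially be generated over an intermediate submodel by ground-model data; one shows the trace of $\mathcal{W}$ on $V$ (or on a suitable intermediate $V^{\mathbb{B}_\lambda}$) is a ground-model ultrafilter that is itself a P-point in that submodel. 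Second, by the design of $V$ this ground-model P-point is selective, and then the upgraded Kunen argument derives a contradiction: a selective ultrafilter cannot be extended to a P-point after adding a random real, because the random real can be used to split the would-be pseudointersections against any prescribed selective partition. The main obstacle, and where the genuinely new work sits, is establishing that every P-point in the random extension reflects to a selective ultrafilter of the ground model despite the very large number $\kappa$ of random reals added; handling the $\kappa$-many coordinates (rather than a single random real) requires a ccc-productivity and factorization analysis of $\mathbb{B}_\kappa$ to reduce to the one-random-real case where Kunen's obstruction applies.

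Consequently, assembling these pieces gives a model of $\mathsf{ZFC}$ in which a P-measure exists, no P-point exists, and $2^{\aleph_0}$ is as large as the chosen $\kappa$ permits, which is exactly the assertion of Theorem \ref{mmain}. I expect the preservation lemmas for the ground-model construction (ensuring all P-points stay selective through the iteration) and the reflection argument for $\mathbb{B}_\kappa$ to be the two most delicate components, with everything else following from the cited extension theorem and a suitable rendering of Kunen's classical result.
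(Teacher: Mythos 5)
Your plan for the P-measure half is the paper's: preserve a selective ultrafilter into the penultimate model and cite Theorem \ref{p_measures_random}. The fatal problem is in the no-P-points half, at the step where you claim that a P-point $\mathcal{W}$ of $V^{\mathbb{B}_\kappa}$ reflects to ``a ground-model ultrafilter that is itself a P-point in that submodel.'' This is exactly what fails, and it is the reason the theorem required new ideas. From a $\mathbb{B}_\kappa$-name $\dot{\mathcal{W}}$ and a condition $p$ one can only extract the trace $\dot{\mathcal{W}}[p]=\{A\subseteq\omega\colon A\in V,\ p\Vdash A\in\dot{\mathcal{W}}\}$, and this is merely a \emph{filter}: no single condition need decide, for every ground-model $A$, which of $A$, $A^c$ lands in $\dot{\mathcal{W}}$, and the set $\mathcal{W}\cap\mathcal{P}(\omega)^V$ itself need not belong to $V$ (nor to any intermediate $V^{\mathbb{B}_\lambda}$; a P-point consists of continuum many reals, so it is not ``captured by countably many reals''). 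Consequently the hypothesis ``all P-points of $V$ are selective'' gives no traction: a P-point added by the random forcing need not extend any ground-model ultrafilter at all, only a ground-model filter. This is precisely the obstruction the paper flags right after Theorem \ref{unique_Shelah}, and what one actually knows about the trace is only that it is a non-meager P-filter (Proposition \ref{ppmeasure}, Corollary \ref{nonme}). Accordingly, Kunen's theorem has to be upgraded from ultrafilters to filters: Theorem \ref{not_coherent} shows that the trace of any name for a P-point is not \emph{nearly coherent} with the selective ultrafilter $\mathcal{U}$; there is no reduction of $\mathbb{B}_\kappa$ to the one-random-real case, the argument works with $\mathbb{B}_\kappa$ directly via ccc-ness and selectivity.

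For the same reason your ground model is too weak. It does not suffice to kill all non-selective P-\emph{points}; one must build, by a $\diamondsuit(S)$-guided countable support iteration of Shelah's forcings $SP^*(\dot{\mathcal{F}})$, a model over which \emph{every non-meager P-filter not nearly coherent with} $\mathcal{U}$ has been destroyed, and destroyed robustly: Lemma \ref{preservation_destruction}(2) guarantees that such an $\mathcal{F}$ cannot be extended to a P-point in any further proper $\omega^\omega$-bounding extension, which is how the subsequent random forcing (itself $\omega^\omega$-bounding) is absorbed into the destruction. The contradiction is then obtained by reflecting the trace \emph{filter} of a putative P-point to an intermediate stage $\alpha$ caught by the diamond (a club argument in the spirit of Proposition \ref{extending}), where it was killed --- not by reflecting the P-point itself. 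So your proposal has the right silhouette (Shelah-type ground model, then $\kappa$ random reals, plus an upgraded Kunen theorem), but the reflection step as you state it is false, and repairing it --- passing from ultrafilters to non-meager P-filters, from selectivity to near coherence, and redesigning the ground-model iteration so the killing anticipates the random extension --- is the actual content of the paper's proof.
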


The proof will rely on two results concerning the random forcing. The first one is proved in \cite[Theorem 4.15]{p_measures_random}.

\begin{thm}[Borodulin-Nadzieja, Sobota]\label{p_measures_random}
	If there is a P-point in the ground model, then after forcing with $\mathbb{B}_\kappa$, for any $\kappa$, there is a P-measure.
\end{thm}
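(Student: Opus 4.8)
The plan is to build, in $V[G]$, a finitely additive probability measure $\mu$ on $\mathcal{P}(\omega)$ that \emph{extends} the ground-model P-point $\mathcal{U}$ (so $\mu(U)=1$ for every $U\in\mathcal{U}$), and then to verify that $\mu$ is a P-measure via the function characterisation of Proposition~\ref{pmeasfunc}. I will lean on two features of $\mathbb{B}_\kappa$: it is a \emph{measure} algebra, so it carries a genuinely $\sigma$-additive $\lambda$ (this is what will turn ``$\lim$ of measures'' into ``measure of a pseudointersection''), and it is $\omega^\omega$-bounding. Throughout, for $A\in V[G]$ I fix a nice name and write $b^A_n=\llbracket n\in\dot A\rrbracket\in\mathbb{B}_\kappa$, an element computed in $V$; thus $A$ is coded by the sequence $(b^A_n)_n$, and $n\in A$ iff $b^A_n\in G$.

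The candidate is obtained by averaging the Boolean values through $\mathcal{U}$ while localising at the generic. For $p\in G$ put $\lambda(b\mid p)=\lambda(b\wedge p)/\lambda(p)$ and define
\[ \mu(A)=\lim_{p\to G}\ \lim_{n\to\mathcal{U}}\ \lambda(b^A_n\mid p), \]
where the inner limit is the (always existing) $\mathcal{U}$-limit of a bounded real sequence and the outer one runs along $G$ directed downward. The properties not requiring the P-point are direct. \emph{Well-definedness}: if $\dot A[G]=\dot A'[G]$, then some $p_0\in G$ forces $\dot A=\dot A'$, i.e.\ $p_0\wedge b^A_n=p_0\wedge b^{A'}_n$ for all $n$; since $G$ is cofinal below $p_0$ and $\lambda(\cdot\mid p)$ for $p\le p_0$ only sees the values below $p_0$, the two names give the same number. \emph{Finite additivity}: if $A\cap A'=\emptyset$ then some $p_0\in G$ makes $p_0\wedge b^A_n$ and $p_0\wedge b^{A'}_n$ disjoint with join $p_0\wedge b^{A\cup A'}_n$, so below $p_0$ the conditional masses add, and both limits are linear. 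Finally $\mu(\omega)=1$, and for $U\in\mathcal{U}\subseteq V$ the values $b^U_n$ equal $0$ or $1$ according to whether $n\in U$, whence the inner limit is $\lim_{n\to\mathcal{U}}\mathbbm{1}_{U}(n)=1$ and $\mu(U)=1$; so $\mu$ extends $\mathcal{U}$.

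The role of the P-point is to guarantee that the outer limit exists and that $\mu$ is a P-measure. The idea is that the inner $\mathcal{U}$-limit concentrates on large $n$, and a P-point lets one pass to a set $U\in\mathcal{U}$ on which the supports of the $b^A_n$ (the countably many coordinates of $2^\kappa$ on which they depend) are spread out relative to a fixed filtration of $\mathbb{B}_\kappa$ by finite subalgebras; on such a $U$ the quantities $\lambda(b^A_n\mid p)$ become asymptotically insensitive to $p$, which yields convergence of the outer limit. For the P-measure property, given a $\subseteq$-decreasing $(A_k)_k$ in $V[G]$, I would first use the $\omega^\omega$-bounding property of $\mathbb{B}_\kappa$ to capture, by a single ground-model function, the data describing how the $b^{A_k}_n$ sit inside the filtration (exactly as in the proof of Proposition~\ref{omompmeas}). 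Then, because $\mathcal{U}$ is a P-point, I can choose \emph{one} $U\in\mathcal{U}$ that simultaneously witnesses the required spreading for all $k$; feeding $U$ and the captured function into Proposition~\ref{pmeasfunc} produces an increasing $f\in V$ such that $Z=\bigcup_k A_k\cap[f(k),f(k+1))$ is a pseudointersection with $\mu(Z)=\lim_k\mu(A_k)$, the $\sigma$-additivity of $\lambda$ supplying the final equality in the same spirit as the P-ideal-kernel criterion of Section~\ref{themeasure}.

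The main obstacle is precisely this diagonalisation: proving that the outer limit exists and, above all, that a single $U\in\mathcal{U}$ can be chosen to handle an entire decreasing sequence at once. Both steps are exactly where being a \emph{P-point}, rather than an arbitrary ultrafilter, is indispensable: for a non-P-point the $\mathcal{U}$-limit could keep interacting with the conditioning level, breaking either the existence of $\mu$ or the additive property. Once the diagonalisation is secured, the remaining verifications (that $f$ may be taken in $V$ by $\omega^\omega$-bounding, and that $\mu(Z)$ hits the target value) are routine.
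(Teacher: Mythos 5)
Your candidate measure is essentially the right one — the proof of the cited result (\cite[Theorem 4.15]{p_measures_random}; note the paper itself does not prove this statement but quotes it) also starts from $\mathcal{U}$-limits of conditional measures of the Boolean values $b^A_n$ and evaluates them ``at the generic'' — and your routine verifications (well-definedness, finite additivity, $\mu(U)=1$ for $U\in\mathcal{U}$, vanishing on points) are fine modulo existence of the outer limit. But the two steps you yourself flag as ``the main obstacle'' are exactly the content of the theorem, and what you offer in their place (``spreading of supports'', ``one $U$ witnessing the spreading for all $k$'') is a declaration of intent, not an argument. Moreover, your diagnosis of where the P-point is needed is off. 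The existence of the outer limit needs no P-point at all: for \emph{any} ultrafilter, $\Theta(p)=\lim_{n\to\mathcal{U}}\lambda(b^A_n\wedge p)$ is a finitely additive measure on $\mathbb{B}_\kappa$ dominated by $\lambda$, hence automatically countably additive (domination by a countably additive measure forces continuity from above), so by Radon--Nikodym it has a density $g\in V$; since the positive-measure level sets of $g$ form countable maximal antichains which $G$ must meet, the net $\nu_p=\Theta(p)/\lambda(p)$ converges along $G$. So existence is true but not for the reason you give, and in particular your construction, as justified so far, produces a finitely additive measure extending $\mathcal{U}$ for an arbitrary ultrafilter — which cannot be the whole story, since the theorem is false without the P-point hypothesis.

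The genuine gap is the additive property itself. Given a decreasing sequence $(A_k)$ and a function $f$, the interleaved set $Z=\bigcup_k A_k\cap[f(k),f(k+1))$ has Boolean values $b^Z_n=b^{A_k}_n$ for $n\in[f(k),f(k+1))$, and $\mu(Z)$ is computed from $\lim_{n\to\mathcal{U}}\lambda(b^Z_n\wedge p)$: an ultrafilter limit of an \emph{interleaved} sequence. Since each block $[f(k),f(k+1))$ is finite, hence not in $\mathcal{U}$, this limit bears no a priori relation to the individual limits $\lim_{n\to\mathcal{U}}\lambda(b^{A_k}_n\wedge p)$, so nothing you have written connects $\mu(Z)$ to $\lim_k\mu(A_k)$. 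To force $\mu(Z)\geq\lim_k\mu(A_k)$ one must use the P-point to find a single $U\in\mathcal{U}$ almost contained in all the sets $\{n:\lambda(b^{A_k}_n\wedge p)\text{ is large}\}$, and then choose $f$ so that $U\cap[f(k),f(k+1))$ lands inside the $k$-th such set — and this has to be arranged uniformly enough in the condition $p$ (there are uncountably many, while the P-point only handles countably many sets at a time) for the resulting estimate to survive the passage to the density at the generic. That diagonalization of countably many sequences against the conditions is precisely where all the work in the cited proof lies, and it is absent here; invoking Proposition \ref{pmeasfunc}, $\omega^\omega$-bounding, and ``$\sigma$-additivity of $\lambda$'' does not substitute for it.
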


The following is the main ingredient of the proof that there are no selective ultrafilters in the random model, see \cite[Theorem 5.1]{Kunen} and  \cite[Corollary 5.7]{p_measures_random} for an alternative proof.

\begin{thm}[Kunen]\label{selective}
No selective ultrafilter from the ground model can be extended to a P-point after forcing with $\mathbb{B}_\kappa$ for $\kappa \geq \omega$.
\end{thm}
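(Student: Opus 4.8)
The plan is to argue by contradiction, following Kunen's original idea: exploit the fact that a random real behaves like an independent sequence of fair coins on every ground-model set, and that a selective ultrafilter supplies an ample block/partition structure on which this independence can be leveraged. Assume $\mathcal{W}$ is a P-point in $V[G]$ with $\mathcal{U}\subseteq\mathcal{W}$. Since $\mathbb{B}_\kappa$ is a measure algebra and all the combinatorics below involves only countably many coordinates, I would first isolate a single random real $r$ over $V$ (the generic restricted to a countable set of coordinates), so that $R=r^{-1}[1]\in V[r]\subseteq V[G]$. As $\mathcal{W}$ is an ultrafilter it decides $R$; using the measure-preserving symmetry $r\mapsto 1-r$ I may assume without loss of generality that $R\in\mathcal{W}$.

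Next I would use selectivity to build the right decreasing sequence. Fix in $V$ a set $A=\{a_0<a_1<\cdots\}\in\mathcal{U}$ whose gaps grow fast (possible since a selective ultrafilter is rapid) and group $A$ into consecutive finite blocks $\mathcal{B}_n$ with $|\mathcal{B}_n|\to\infty$. The key point is that for the ground-model set $A$ the sequence $\langle r(a_i):i\in\omega\rangle$ is random over $V$; hence the coordinates are independent fair coins and, by the law of large numbers, $R$ meets each block $\mathcal{B}_n$ in roughly half its points. From $R$ together with the cofinite-in-$A$ sets (all of which lie in $\mathcal{U}\subseteq\mathcal{W}$) I would then manufacture a $\subseteq$-decreasing sequence $\langle D_n:n\in\omega\rangle$ of elements of $\mathcal{W}$ whose traces on the blocks shrink by a fixed fraction at each step, so that the proportion of each block retained by $D_n$ tends to $0$ as $n\to\infty$.

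The contradiction then comes from combining the P-point property of $\mathcal{W}$ with the independence of the coin flips. A pseudointersection $Z\in\mathcal{W}$ of $\langle D_n\rangle$ must be almost contained in every $D_n$, so $Z$ selects, from all but finitely many blocks, only points lying in an $r$-determined sub-part of each block; encoding this as a branch through the block-tree, the event that a single set $Z\in V[G]$ both follows the $r$-determined branch and remains in the filter generated by $\mathcal{U}\cup\{R\}$ is, by the independence of the values $r(a_i)$, a null event. Since $r$ is random over $V$ this cannot happen, so no such $Z$ belongs to $\mathcal{W}$, contradicting that $\mathcal{W}$ is a P-point.

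I expect the main obstacle to be the handling of $Z$, which is a genuinely new object of $V[G]$ rather than of $V$: the measure estimate cannot be read off directly but must be run against a $\mathbb{B}_\kappa$-name for the pseudointersection. The natural remedy is a fusion/exhaustion argument on the random algebra, using $\omega^\omega$-bounding to extract from $V$ a modulus $f\colon\omega\to\omega$ for the relation $Z\setminus f(n)\subseteq D_n$ (in the spirit of Proposition \ref{pmeasfunc}), thereby reducing the capture of the $r$-branch to a countable family of independent events whose complements have positive measure. Since $R$ depends on only countably many coordinates while the name for $Z$ may use all of them, this step relies on Fubini for the product random measure to keep the relevant coin flips independent of the part of the generic coding $Z$; verifying that this independence genuinely yields a null capture event is the technical heart of the argument.
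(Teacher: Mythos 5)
A preliminary remark: the paper does not prove Theorem \ref{selective} at all --- it is quoted as Kunen's theorem, with \cite{Kunen} and \cite[Corollary 5.7]{p_measures_random} cited for proofs --- so your sketch can only be measured against Kunen's argument, whose general flavour (random coordinates as fair coins, a null ``capture'' event, selectivity providing block structure) you do correctly channel.

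There is, however, a decisive gap at the point where you ``manufacture'' the decreasing sequence $\langle D_n\rangle$ from $R$ together with sets of $\mathcal{U}$. Any set whose membership in $\mathcal{W}$ is \emph{guaranteed} by the data $R\in\mathcal{W}$ and $\mathcal{U}\subseteq\mathcal{W}$ must lie in the filter $\mathcal{F}$ generated by $\mathcal{U}\cup\{R\}$, and that filter is itself a P-filter: if $D_n\supseteq U_n\cap R$ with $U_n\in\mathcal{U}$, which we may take decreasing, then a pseudointersection $U_\infty\in\mathcal{U}$ of $(U_n)$ exists because the selective ultrafilter $\mathcal{U}$ is a P-point, and $U_\infty\cap R\in\mathcal{F}\subseteq\mathcal{W}$ is a pseudointersection of $(D_n)$, infinite since $R$ is random over $V$. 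So the event you claim is null --- that some $Z$ in the filter generated by $\mathcal{U}\cup\{R\}$ pseudointersects your sequence --- in fact always occurs, and no sequence built this way can contradict the P-point property of $\mathcal{W}$. Kunen's proof needs infinitely many mutually random sets $R_k$ extracted from the generic, together with the sides chosen by the ultrafilter itself: $\epsilon(k)\in\{0,1\}$ with $R_k^{\epsilon(k)}\in\mathcal{W}$ (where $R_k^1=R_k$ and $R_k^0=\omega\setminus R_k$), and $D_n=\bigcap_{k\le n}R_k^{\epsilon(k)}$. Your symmetry trick $r\mapsto 1-r$ removes one unknown sign but cannot remove infinitely many; the sequence $\epsilon\in2^\omega$ is an essential new object of $V[G]$ that your proposal never introduces. (Compare the sets $\dot{R}_n^{\dot{h}(n)}$ and $\dot{W}_n$ in the proof of Lemma \ref{preservation_destruction}(2) of this paper, which implement exactly this device.)

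Second, the step you defer to ``Fubini for the product random measure'' is not available, and this is precisely where selectivity must do its real work, which your sketch never asks of it. Once $\epsilon$ and the pseudointersection $Z$ come from $\mathcal{W}$, their names may depend on exactly the coordinates carrying the coins, so there is no coordinate separation to Fubinize along; and the naive estimate genuinely fails. Indeed, fix $A=\{a_\ell\colon\ell\in\omega\}\in\mathcal{U}$ with one point per block and let $v_\ell\in 2^{\ell+1}$ be the (independent, uniform) vector of coins attached to $a_\ell$; by the second Borel--Cantelli lemma, almost surely every $v_\ell$ has infinitely many extensions among $\{v_m\colon m>\ell\}$, so almost surely there \emph{is} a branch $\epsilon\in 2^\omega$, defined from the coins, with $v_\ell\subseteq\epsilon$ for infinitely many $\ell$. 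Thus the capture event is not null once the branch may depend on the coins --- which it may. In Kunen's argument this circularity is broken in the ground model: the measure-algebra elements $\llbracket\dot{\epsilon}\upharpoonright(\ell+1)=\sigma\rrbracket$ are approximated in measure by conditions with finite supports lying in $V$, and selectivity of $\mathcal{U}$ is then used to choose $A\in\mathcal{U}$ meeting each block in at most one point which enters its block late enough that its coins avoid those supports; only then do independence and Borel--Cantelli produce $A\in\mathcal{U}$ with $A\cap Z$ finite, contradicting $A\cap Z\in\mathcal{W}$. Your use of selectivity (rapidity, blocks, the law of large numbers) never touches this point, so the technical heart you flag is not merely unverified --- as proposed, it is false.
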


Having these two theorems and Problem \ref{main_problem} in mind, our attention should be attracted by the following theorem, see \cite[Theorem 5.13 from Chapter VI, and Theorem 4.1 from Chapter XVIII]{pif}.

\begin{thm}[Shelah]\label{unique_Shelah} It is consistent that there is a P-point and all P-points are selective.
\end{thm}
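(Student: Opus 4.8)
The plan is to build the model by a countable support iteration of proper, $\omega^\omega$-bounding forcings over a model of $\mathsf{CH}$, of exactly the kind to which the capturing argument of Proposition \ref{extending} applies. Working in a ground model $V\models\mathsf{GCH}$ (so that $2^{\aleph_1}=\aleph_2$ and the bookkeeping below has room), I would first fix, using $\mathsf{CH}$, a Ramsey (selective) ultrafilter $\mathcal{U}_0$ together with a $\subseteq^*$-decreasing base $\langle U_\xi:\xi<\omega_1\rangle$ generating it. This $\mathcal{U}_0$ is the P-point we intend to keep alive: the whole construction is arranged so that $\mathcal{U}_0$ still generates a selective ultrafilter in the final extension, while every competing P-point is eliminated.

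The iteration $\langle\mathbb{P}_\alpha,\dot{\mathbb{Q}}_\alpha:\alpha<\omega_2\rangle$ is a countable support iteration in which each $\dot{\mathbb{Q}}_\alpha$ is forced to be proper, of size $\aleph_1$, and $\omega^\omega$-bounding. The role of $\dot{\mathbb{Q}}_\alpha$ is to attack one potential \emph{non-selective} P-point. Recall that an ultrafilter is selective iff it is simultaneously a P-point and a Q-point; so a P-point fails to be selective exactly when there is an interval partition $(I_n)$ no member of which it selects (meets each $I_n$ in at most one point, mod finite). Using the reflection of Proposition \ref{extending} in its ultrafilter form — a $\mathbb{P}_{\omega_2}$-name for an ultrafilter, restricted to an intermediate model $V[G_\alpha]$ with $\alpha$ of cofinality $\omega_1$, is already a $\mathbb{P}_\alpha$-name for an ultrafilter — every P-point of the final model is captured at some stage $\alpha<\omega_2$. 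A bookkeeping function enumerating all $\aleph_2$ relevant nice names then assigns to each captured candidate $(\dot W,\dot I)$, a name for a P-point together with a name for an interval partition witnessing its non-selectivity, a stage $\alpha$ at which $\dot{\mathbb{Q}}_\alpha$ acts on it.

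For the iterand I would take a Mathias/selector forcing guided by the captured candidate $\dot W$: conditions are pairs $(s,A)$ with $s$ a finite partial selector of $(I_n)$ and $A\in W$, ordered by end-extension of $s$ inside $A$ and $\subseteq$-shrinking of $A$. Because $W$ is a P-point (and we may thin to a Ramsey-guided version), this forcing is proper, $\omega^\omega$-bounding, and has pure decision; crucially it can be shown to preserve $\mathcal{U}_0$ as a selective ultrafilter. Its generic real diagonalizes $W$ along $(I_n)$ and thereby forces that $W$ cannot extend to a P-point still failing the Q-point property for $(I_n)$: either the candidate ceases to be an ultrafilter, or any P-point extending it must now select $(I_n)$. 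Since a countable support iteration of proper $\omega^\omega$-bounding forcings is again $\omega^\omega$-bounding, every interval partition of the final model is coarsened by one appearing at a bounded stage, so handling all captured candidates makes every P-point of $V[G_{\omega_2}]$ a Q-point, hence selective; existence of a P-point is then delivered by the preserved $\mathcal{U}_0$.

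The main obstacle — and the genuine content of Shelah's theorem — is the preservation machinery. One must show that the iterands are proper and that properness and $\omega^\omega$-boundedness are inherited by the countable support iteration, and, far more delicately, that the single selective ultrafilter $\mathcal{U}_0$ survives the \emph{entire} iteration even though cofinally many iterands are designed to destroy P-points. This needs an iteration theorem asserting that the relevant ``$\mathcal{U}_0$-preserving'' property is preserved under countable support iteration of proper forcing (PIF, Chapter VI), together with the verification that the selector iterands possess it. Equally delicate is the combinatorial lemma that the selector forcing genuinely prevents its targeted candidate from generating a non-selective P-point in \emph{all} further extensions, where the interplay between capturing, bookkeeping, and $\omega^\omega$-bounding must be made exact. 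These preservation and capturing steps, not the bookkeeping skeleton, are where essentially all the work lies.
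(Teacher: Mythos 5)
Your skeleton (countable support iteration of proper $\omega^\omega$-bounding iterands over $\mathsf{CH}$, one selective ultrafilter $\mathcal{U}_0$ preserved throughout, capture-and-kill via reflection and bookkeeping) matches the architecture of the paper's proof, but the iterand you chose does not do the job, and that is exactly where the content of the theorem lives. First, the claimed properties of your Mathias/selector forcing are unjustified, and for the unrestricted Mathias version false: Mathias forcing guided by a Ramsey ultrafilter adds a dominating real, hence is not $\omega^\omega$-bounding, and pure decision is tied to Ramseyness of the guiding family, not mere P-pointness; your interval-selector variant avoids the obvious dominating real, but you give no argument for bounding or properness when the guide is an arbitrary captured P-point name. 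Second --- the decisive gap --- your kill is not robust. Adding one generic partial selector $X$ diagonalizing $W$ does not prevent $W$ from extending to a non-selective P-point later in the iteration or in the final model: any ultrafilter extending $W$ may simply contain $\omega\setminus X$ and ignore your generic entirely, so the step ``either the candidate ceases to be an ultrafilter, or any P-point extending it must now select $(I_n)$'' fails. This is precisely why the paper uses Shelah's forcing $SP^*(\mathcal{F})$: it generically adds infinitely many sets $\dot{R}_n$ with $\dot{R}_n,\omega\setminus\dot{R}_n\in\mathcal{F}^+$, so that any ultrafilter $\dot{\mathcal{V}}\supseteq\mathcal{F}$ in any further extension must choose sides $\dot{R}_n^{\dot{h}(n)}$, and Lemma \ref{preservation_destruction}(2) shows the finite intersections $\dot{W}_n$ have no pseudointersection in $\dot{\mathcal{V}}$ after \emph{any} further proper $\omega^\omega$-bounding forcing --- robustness through the whole tail, which a single diagonalizing real cannot provide.

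There is also a preservation-versus-destruction conflict you never address. The paper preserves $\mathcal{U}$ through a stage attacking $\mathcal{F}$ only under the hypothesis that $\mathcal{F}$ is \emph{not nearly coherent} with $\mathcal{U}$ (Lemma \ref{preservation_destruction}(1), proved via Eisworth's game $\mathcal{G}(\mathcal{U},\mathcal{F})$ and Lemma \ref{todd_lemma}, where non-near-coherence is essential). Your plan attacks every captured non-selective P-point, but non-selective P-points nearly coherent with $\mathcal{U}_0$ (for instance finite-to-one lifts of an image of $\mathcal{U}_0$, which exist at intermediate $\mathsf{CH}$ stages) would then be attacked too, and destroying a filter in $\mathcal{U}_0$'s near-coherence class threatens $\mathcal{U}_0$ itself; you give no preservation argument covering this case. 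The paper sidesteps exactly this by attacking all non-meager P-filters \emph{outside} the near-coherence class of $\mathcal{U}$, guessed by $\diamondsuit(S)$ rather than plain bookkeeping, and by handling the surviving class through other means (Theorem \ref{not_coherent} in the random extension, and Shelah's PIF argument for the literal statement). The two missing ideas in your proposal are thus: a kill mechanism that survives all further $\omega^\omega$-bounding forcing, and a hypothesis (non-near-coherence) determining whom you may safely attack.
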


Let $V$ be a model witnessing the situation as in the above theorem. If we force over $V$ with a random forcing, then by Theorem \ref{selective} no ultrafilter from $V$ can be extended to a P-point and, by Theorem \ref{p_measures_random} there is a
P-measure in the extension. It sounds like we are close to proving Theorem \ref{mmain}. However, a priori it is possible that by forcing with $\mathbb{B}$ we add some new P-points (we will come back to this issue in the remarks after Question
\ref{random_question}). So, to obtain a model in which there are P-measures but no
P-points, we have to work a little bit more. 

First, we need a strengthening of Theorem \ref{selective}. We will show that not only selective ultrafilters cannot be extended to P-points (in the random extensions) but in fact all ultrafilters that are nearly coherent with selective ultrafilters. 

\begin{df}[A. Blass, see \cite{ncf1}]
	Let $\mathcal{F}_0$ and $\mathcal{F}_1$ be filters on $\omega$. We say that $\mathcal{F}_0$ and $\mathcal{F}_1$ are nearly coherent if there is a finite to one function $f\colon \omega\to\omega$ such that $f[\mathcal{F}_0] \cup f[\mathcal{F}_1]$ generates a filter.
\end{df}

Note that if $\mathcal{U}$ is an ultrafilter, then $\mathcal{F}$, $\mathcal{U}$ are nearly coherent if and only if there is a finite to one function $f\colon \omega\to\omega$ such that $f[\mathcal{F}] \subseteq f[\mathcal{U}]$.

For a $\mathbb{B}_\kappa$-name for an ultrafilter $\dot{\mathcal{V}}$ and $p\in \mathbb{B}_\kappa$ we denote
\[ \dot{\mathcal{V}}[p] = \{A\subseteq \omega\colon p \Vdash A \in \dot{\mathcal{V}} \}. \]

\begin{thm}\label{not_coherent}
Let $\mathcal{U}$ be a selective ultrafilter and $\dot{\mathcal{V}}$ be a $\mathbb{B}_\kappa$-name such that $\mathbb{B}_\kappa \Vdash\dot{\mathcal{V}}\text{ is a P-point}$. Then for all $p\in\mathbb{B}_\kappa$, the filters $\mathcal{U}$ and $\dot{\mathcal{V}}[p]$ are not nearly coherent.
\end{thm}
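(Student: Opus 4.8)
The plan is to prove the contrapositive: assume that for some condition $p$ the selective ultrafilter $\mathcal{U}$ and $\dot{\mathcal{V}}[p]$ \emph{are} nearly coherent, witnessed by a finite-to-one $f\colon\omega\to\omega$, and derive that $\dot{\mathcal{V}}$ cannot be forced to be a P-point below $p$. Since $\mathcal{U}$ is selective and $f$ is finite-to-one, the image filter $f[\mathcal{U}]$ is again a selective ultrafilter (selectivity is preserved under finite-to-one images, indeed $f[\mathcal{U}] \leq_{RB} \mathcal{U}$ and selectivity is $\leq_{RB}$-hereditary downward). Near coherence with the ultrafilter $\mathcal{U}$ means $f[\mathcal{U}] \subseteq f[\dot{\mathcal{V}}[p]]$, so below $p$ the P-point $\dot{\mathcal{V}}$ maps under $f$ into a ground-model \emph{selective} ultrafilter. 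The idea is then to reduce the whole situation to Kunen's Theorem \ref{selective}: the selective ultrafilter $f[\mathcal{U}]$ of the ground model would be extended to a P-point in the random extension, contradicting that theorem.

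The main work is to make this reduction precise in the forcing language. First I would argue that $f[\dot{\mathcal{V}}]$ (the pushforward of the generic P-point) is forced by $p$ to be a P-point, using that finite-to-one images of P-points are P-points — this is the filter half of Proposition \ref{P-hered} combined with Fact \ref{rk-is-rb}, noting that $f[\dot{\mathcal{V}}]$ is exactly the filter $\{X : f^{-1}[X]\in\dot{\mathcal{V}}\}$ and $f$ is finite-to-one. Next, from the near-coherence assumption $f[\mathcal{U}] \subseteq f[\dot{\mathcal{V}}[p]]$ I would deduce that $p$ forces the ground-model selective ultrafilter $f[\mathcal{U}]$ to be contained in the P-point $f[\dot{\mathcal{V}}]$. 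But an ultrafilter contained in a filter equals that filter, so $p\Vdash f[\dot{\mathcal{V}}] = f[\mathcal{U}]$; that is, the ground-model selective ultrafilter $f[\mathcal{U}]$ is forced to \emph{be} a P-point in $V^{\mathbb{B}_\kappa}$. This directly contradicts Theorem \ref{selective} applied to the selective ultrafilter $f[\mathcal{U}]$, completing the proof.

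The delicate point — and the step I expect to be the main obstacle — is the passage from the \emph{set-theoretic} inclusion $f[\mathcal{U}]\subseteq f[\dot{\mathcal{V}}[p]]$, which lives in $V$ and concerns the sets forced into $\dot{\mathcal{V}}$ by $p$, to the \emph{forced} statement $p\Vdash f[\mathcal{U}]\subseteq f[\dot{\mathcal{V}}]$. One must be careful that $\dot{\mathcal{V}}[p]$ collects only those sets forced \emph{outright} by $p$, so for a given $A\in f[\mathcal{U}]$ one needs $p\Vdash f^{-1}[A]\in\dot{\mathcal{V}}$, i.e. $f^{-1}[A]\in\dot{\mathcal{V}}[p]$; unwinding the definition of $f[\dot{\mathcal{V}}[p]]$ shows this is precisely what near coherence supplies. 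I would also double-check the behaviour below arbitrary $q\leq p$: since $\dot{\mathcal{V}}[p]\subseteq\dot{\mathcal{V}}[q]$, the inclusion only improves, so the argument is stable under strengthening the condition, which is what legitimizes reading the set inclusion as a forced inclusion.

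One subtlety worth isolating is the claim that finite-to-one images of selective ultrafilters are selective (equivalently that $f[\mathcal{U}]$ remains selective), since Kunen's theorem must be applied to a genuinely selective ground-model ultrafilter. If one prefers to avoid invoking preservation of selectivity, an alternative is to apply the stronger, just-proven form of the theorem recursively; but the cleaner route is to verify preservation directly from the combinatorial definition of selectivity (every partition into $\mathcal{U}$-small pieces admits a selector in $\mathcal{U}$), which transfers along finite-to-one maps without difficulty.
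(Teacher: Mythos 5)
Your reduction fails at the very first step: you have the near-coherence inclusion backwards. Near coherence of $\mathcal{U}$ and $\dot{\mathcal{V}}[p]$ says that $f[\mathcal{U}]\cup f[\dot{\mathcal{V}}[p]]$ generates a proper filter for some finite-to-one $f$. Since $f[\mathcal{U}]$ is an ultrafilter and any proper filter containing an ultrafilter equals it, what this hypothesis actually yields is $f[\dot{\mathcal{V}}[p]]\subseteq f[\mathcal{U}]$ --- exactly the opposite of your claim that $f[\mathcal{U}]\subseteq f[\dot{\mathcal{V}}[p]]$. Your inclusion would force $f[\dot{\mathcal{V}}[p]]=f[\mathcal{U}]$, i.e.\ it would say that the single condition $p$ already decides, for \emph{every} $X\subseteq\omega$, whether $f^{-1}[X]\in\dot{\mathcal{V}}$; nothing supplies this, and it is generally false, since $\dot{\mathcal{V}}[p]$ collects only the sets forced outright by $p$ and is typically very far from being ultra. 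Consequently the passage to $p\Vdash f[\mathcal{U}]\subseteq f[\dot{\mathcal{V}}]$, and hence the one-line contradiction with Theorem \ref{selective}, is unavailable. (The surrounding ingredients are fine: $f[\mathcal{U}]$ is indeed selective, $f[\dot{\mathcal{V}}]$ is indeed forced to be a P-point, and an ultrafilter contained in a proper filter equals it. The error is solely the direction of the inclusion, which you identified as the ``delicate point'' but then resolved incorrectly.)

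With the correct inclusion $f[\dot{\mathcal{V}}[p_0]]\subseteq f[\mathcal{U}]$, the contradiction has to be manufactured differently, and this is where the paper's real work lies. It first proves precisely your auxiliary claim, via Theorem \ref{selective}: a ground-model filter that is Rudin-Blass above a selective ultrafilter cannot be extended to a P-point in the random extension. Applying this to the selective ultrafilter $f[\mathcal{U}]$ and to the filters $\dot{\mathcal{V}}[q]$ --- each of which \emph{is} extended to the P-point $\dot{\mathcal{V}}$ below $q$ --- one gets that for every $q\le p_0$ there exist $A_q\in f[\mathcal{U}]$ and $q'\le q$ with $q'\Vdash f^{-1}[A_q]\notin\dot{\mathcal{V}}$; so such conditions are dense below $p_0$. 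The paper then takes a maximal antichain $\mathcal{A}$ of such conditions, which is countable because $\mathbb{B}_\kappa$ is ccc, and uses that $f[\mathcal{U}]$ is a P-point to find a single $X\in f[\mathcal{U}]$ with $X\subseteq^* A_q$ for all $q\in\mathcal{A}$. Since $f$ is finite-to-one, $f^{-1}[X]\subseteq^* f^{-1}[A_q]$, so every $q\in\mathcal{A}$ forces $f^{-1}[X]\notin\dot{\mathcal{V}}$; by maximality $p_0\Vdash f^{-1}[X]\notin\dot{\mathcal{V}}$, hence $p_0\Vdash \omega\setminus f^{-1}[X]\in\dot{\mathcal{V}}$, i.e.\ $\omega\setminus X\in f[\dot{\mathcal{V}}[p_0]]\subseteq f[\mathcal{U}]$, contradicting $X\in f[\mathcal{U}]$. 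This amalgamation --- density below $p_0$, plus ccc, plus a pseudointersection inside the selective ultrafilter --- is the idea missing from your proposal; the near-coherence hypothesis, read in the correct direction, is too weak to push the ground-model ultrafilter into $\dot{\mathcal{V}}[p]$ directly.
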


\begin{proof} 
First note that if $\mathcal{F}$ is a filter which is Rudin-Blass above $\mathcal{U}$, which is witnessed by $f\colon \omega \to \omega$, then $\mathcal{F}$ can not be extended to a P-point after forcing with $\mathbb{B}_\kappa$. Indeed, suppose that 
	$\dot{\mathcal{G}}$ is a P-point extending $\mathcal{F}$. Then $\mathcal{U}\subseteq f[\dot{\mathcal{G}}]$, in other words $\mathcal{U}$ can be extended to $f[\dot{\mathcal{G}}]$ which is a P-point. This is impossible by Theorem \ref{selective}.

Assume now that there is $p_0\in\mathbb{B}_\kappa$ such that $\mathcal{U}$ and $\dot{\mathcal{V}}[p_0]$ are nearly coherent, and that this is witnessed by $h\colon \omega\to\omega$. It means that $h[\dot{\mathcal{V}}[p_0]]\subseteq h[\mathcal{U}]$. By the
	previous paragraph and the fact that $h[\mathcal{U}]$ is also selective, we have that $h[\mathcal{U}]\nleq_{RB}\dot{\mathcal{V}}[p]$ for every $p\in\mathbb{B}_\kappa$. 
So, for every $p\leq p_0$, there is $A_p\in h[\mathcal{U}]$ such that $h^{-1}[A_p]\notin \dot{\mathcal{V}}[p]$, which implies that there is $q\leq p$ such that $q\Vdash A_p\notin\dot{\mathcal{V}}$. 
	Therefore, the set \[ D=\{p\leq p_0\colon \exists A\in h[\mathcal{U}] \ p\Vdash h^{-1}[A]\notin \dot{\mathcal{V}} \}\] is dense below $p_0$.
	Let $\mathcal{A}\subseteq D$ be a maximal antichain below $p_0$. 
	As $\mathbb{B}_\kappa$ is ccc, $\mathcal{A}$ is necessarily countable, so there is $X\in h[\mathcal{U}]$ such that $X\subseteq^* A_p$, for all $p\in\mathcal{A}$. It follows that $p\Vdash h^{-1}[X]\notin\dot{\mathcal{V}}$ for all $p\in\mathcal{A}$
	(otherwise, for some $p\in\mathcal{A}$, there exist $q\leq p$ such that $q\Vdash h^{-1}[A_p]\in\dot{\mathcal{V}}$, which is a contradiction). Since $\mathcal{A}$ is a maximal antichain below $p_0$, it follows that $p_0\Vdash h^{-1}[X]\notin
	\dot{\mathcal{V}}$, which implies that $p_0\Vdash \omega\setminus h^{-1}[X]\in\mathcal{\dot{V}}$. Therefore, $\omega\setminus X\in h[\dot{\mathcal{V}}[p_0]]$. Thus, we have $X\in h[\mathcal{U}]$ and $\omega\setminus X\in
	h[\dot{\mathcal{V}}[p_0]]$, which contradicts our assumption that $h[\dot{\mathcal{V}}[p_0]]\subseteq h[\mathcal{U}]$.
\end{proof}

\begin{rem}
	Having Theorem \ref{not_coherent} and Theorem \ref{selective}, we may face the following temptation. Recall that the principle of Near Coherence of Filters (NCF) says that any two different filters are nearly coherent. It is consistent (e.g. it is implied
	by $\mathfrak{u}<\mathfrak{g}$, see \cite{Blass}). 
	So, if we have NCF with the
	existence of selective ultrafilter, then by Theorem \ref{not_coherent} no ultrafilter would be extendible to a P-point in the random extensions (and by Theorem \ref{p_measures_random} there are P-measures in the random extensions). So, by forcing with
	$\mathbb{B}_{\omega_2}$ over such a model, we would obtain a model with P-measures but without P-points. Unfortunately, this approach does not work: NCF implies that there are no selective ultrafilters. Indeed, suppose that  NCF holds and that $\mathcal{U}$ is a
	selective ultrafilter. NCF implies that $\mathfrak{u}<\mathfrak{d}$ (see \cite{coherentBlass}). So, there is an ultrafilter $\mathcal{V}$ generated by less than $\mathfrak{d}$ sets. By NCF we have $h[\mathcal{V}]\leq_{RB}
	h[\mathcal{U}]$.
	But $h[\mathcal{U}]$ is selective, and so $h[\mathcal{V}]$ is also selective which is impossible, see \cite[Remark 9.24]{Blass}. 	
\end{rem}

Now, we will prove Theorem \ref{unique_Shelah} in a slightly stronger form in a series of lemmas. We will extensively use filter games, both the classical one introduced by Laflamme in \cite{Laflamme} and its modification considered by Eisworth in
\cite{todd}.

\begin{df}[P-filter game]
	Let $\mathcal{F}$ be a  filter on $\omega$. Consider the game $\mathcal{G}(\mathcal{F})$ between Adam and Eve defined in the following way:
	\begin{itemize}
		\item At round $n$ Adam plays a set $A_n \in \mathcal{F}$ and Eve responds with a finite set $F_n \subseteq A_n$.
	\end{itemize}
	Eve wins if $\bigcup_n F_n \in \mathcal{F}$.
\end{df}

Using the above game gives us a handy characterization of the property of being a P-filter in the realm of non-meager filters, see \cite[Theorem 2.15]{Laflamme}

\begin{thm}[Laflamme]\label{laflamme}
	Suppose that $\mathcal{F}$ is non-meager. Then $\mathcal{F}$ is a P-filter if and only if Adam does not have a winning strategy in $\mathcal{G}(\mathcal{F})$.
\end{thm}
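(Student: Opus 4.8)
The plan is to prove the two implications separately; non-meagerness will be needed only for the direction ``non-meager P-filter $\Rightarrow$ Adam has no winning strategy''.

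\textbf{Easy direction (not a P-filter $\Rightarrow$ Adam wins).} Suppose $\mathcal{F}$ is not a P-filter, so there is a $\subseteq$-decreasing sequence $(A_n)$ of elements of $\mathcal{F}$ admitting no pseudointersection in $\mathcal{F}$. I would let Adam play the \emph{non-adaptive} strategy: at round $n$ he plays $A_n$, ignoring Eve entirely. Whatever finite sets $F_n\subseteq A_n$ Eve returns, I claim $\bigcup_n F_n$ is a pseudointersection of $(A_n)$: for each $m$ we have $\bigcup_{n\ge m}F_n\subseteq A_m$ (by monotonicity $F_n\subseteq A_n\subseteq A_m$), so $\bigcup_n F_n\subseteq^* A_m$. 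Hence $\bigcup_n F_n\notin\mathcal{F}$ and Adam wins. Note this half uses neither non-meagerness nor Laflamme's game machinery.

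\textbf{Hard direction, reduction.} Assume $\mathcal{F}$ is a non-meager P-filter, fix an arbitrary strategy $\sigma$ for Adam, and aim to produce a play whose union lies in $\mathcal{F}$. Since Eve's moves are finite subsets of $\omega$, there are only countably many finite sequences of them, so the family $\{A_s : s$ a finite play consistent with $\sigma\}$ of Adam's responses is a \emph{countable} subfamily of $\mathcal{F}$. The P-filter property yields $X\in\mathcal{F}$ with $X\subseteq^* A_s$ for every consistent $s$. Passing to the relativized filter $\mathcal{F}\restriction X=\{S\subseteq X : S\in\mathcal{F}\}$ (still a non-meager P-filter, and for $S\subseteq X$ one has $S\in\mathcal{F}\restriction X$ iff $S\in\mathcal{F}$, since $X\in\mathcal{F}$) and replacing each $A_s$ by $A_s\cap X$, I may assume that \emph{every} set Adam ever plays is cofinite, say $A_s\supseteq[b_s,\infty)$. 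This is the key simplification: the only genuine constraint on Eve at a position $t$ becomes that her finite set be placed above the bound $b_t$.

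\textbf{Hard direction, crux.} With Adam confined to cofinite responses, the plan is to let Eve build her play so that the total union $\bigcup_k F_k$ absorbs a member of $\mathcal{F}$ up to a finite error. I would interleave the reading of Adam's responses with Talagrand's characterization of non-meagerness (equivalently, the dual ideal is non-meager, so for every interval partition some member of the dual ideal contains infinitely many whole intervals): this lets Eve choose an interval partition and decide which portions to grab so that the intervals she is forced to omit — precisely those straddling the jumps of the bound $b_t$ — are absorbed into a single member of the dual ideal, whence $\bigcup_k F_k\in\mathcal{F}$ and Eve defeats $\sigma$. The main obstacle, and the only place non-meagerness is indispensable, is the \emph{adaptive growth of the bounds} $b_t$: the location of the region Eve is next forbidden to use depends on the finite set she grabs now, so no single cover can be planned in advance and the selections must be made by an interleaved recursion (equivalently, by extracting an infinite branch from a finitely branching tree of consistent plays via König's lemma). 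Reconciling this circular dependence with Talagrand's characterization is the technical core of the argument.
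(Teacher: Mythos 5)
Your first direction is correct and complete: against a decreasing sequence $(A_n)$ from $\mathcal{F}$ with no pseudointersection in $\mathcal{F}$, Adam's non-adaptive strategy wins, since whatever Eve does her union satisfies $\bigcup_n F_n\subseteq^* A_m$ for every $m$; and you are right that non-meagerness plays no role in this half. The reduction opening your second direction is also sound: there are only countably many positions consistent with $\sigma$, so the P-filter property yields $X\in\mathcal{F}$ almost contained in every response Adam can make, and passing to the trace filter on $X$ (which is again a non-meager P-filter) one may assume all of Adam's responses are cofinite, with a bound $b_t$ attached to each position $t$.

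However, the argument stops exactly where the theorem lives. You correctly name the obstruction --- Talagrand's characterization requires one interval partition fixed in advance, while the region Eve must skip next depends on the moves she has already made --- and then you declare that reconciling this circularity ``is the technical core of the argument'' without reconciling it. Neither mechanism you gesture at does the job as stated: an ``interleaved recursion'' cannot invoke Talagrand mid-play, because the partition it would be applied to does not yet exist; and the tree of plays consistent with $\sigma$ is infinitely branching (Eve may play any finite set), and even after pruning it to a finitely branching subtree, K\"onig's lemma only produces a branch, with no control on whether the union of Eve's moves lands in $\mathcal{F}$, which is the whole point. The missing idea is a \emph{uniformization of the bounds computable before the play starts}: for each $n$ there are only finitely many positions $t$ of length at most $n$ all of whose moves are subsets of $\{0,\dots,n-1\}$ (say these have \emph{rank} at most $n$), so $f(n)=\max\{\,b_t\colon t \text{ of rank at most } n\,\}$ is well defined. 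Build the partition $I_k=[n_k,n_{k+1})$ with $n_{k+1}=\max(f(n_k),n_k+1)$ \emph{in advance}, then apply non-meagerness once to get $F\in\mathcal{F}$, $F\subseteq X$, missing infinitely many $I_k$. Eve now grabs the pieces of $F$ lying between consecutive missed intervals: whenever her moves so far lie below $n_m$ (their number being at most $n_m$, which is automatic along this play), her position has rank at most $n_m$, so the current bound is at most $f(n_m)\le n_{m+1}$, and the block she is forced to skip is precisely a missed interval, containing no points of $F$. Hence her union is $F$ minus a finite initial segment, which lies in $\mathcal{F}$, and $\sigma$ is defeated. For the record, the paper does not prove Theorem \ref{laflamme} at all (it cites Laflamme), but this rank device is exactly what it uses in the proofs of Proposition \ref{omega-omega} and Lemma \ref{preservation_destruction}, so your proposal omits precisely the trick those arguments turn on.
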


We will also need a game defined in the same spirit by Eisworth in \cite[Definition 3.3]{todd}.

\begin{df}\label{Eisworth-game}
Let $\mathcal{F}_0,\mathcal{F}_1$ be filters on $\omega$. The game $\mathcal{G}(\mathcal{F}_0,\mathcal{F}_1)$ between Adam and Eve is defined as follows: a complete stage of the game consists of two rounds:
\begin{itemize}
\item At round $2n$, Adam plays sets $A_n\in\mathcal{F}_0$, and Eve answers with $a_n\in A_n$.
\item At round $2n+1$, Adam plays sets $B_n\in\mathcal{F}_1$, and Eve answers with a non-empty $F_n\in[B_n]^{<\omega}$.
\end{itemize}
Eve wins  if $\{a_n\colon n\in\omega\}\in \mathcal{F}_0$ and $\bigcup_n F_n \in\mathcal{F}_1$.
\end{df}

For technical reasons, we will ask Eve to play different numbers $a_n$ and pairwise disjoint sets $F_n$. Taking into account her aim in the game, she should not complain about this additional rule.

Our first step is to prove that under suitable assumptions, Adam has no winning strategy in the previous game. We need the following lemma. The reader can find similar theorems in \cite[Corollary 2.3]{todd} and \cite{David-thesis}.

\begin{lem}[Eisworth]\label{todd_lemma}
Let $\mathcal{F}_0,\mathcal{F}_1$ be filters which are not nearly coherent. Let $\mathcal{I}=\{I_n\colon n\in\omega\}$ be an interval partition of $\omega$. Then there is an interval partition $\mathcal{J}=\{J_n\colon n\in\omega\}$ such that each
	 $J_n$ is a union of elements of $\mathcal{I}$, and
\begin{equation*}
    \bigcup_{k\in\omega} J_{4k+1}\in\mathcal{F}_0\text{   and   }    \bigcup_{k\in\omega} J_{4k+3}\in\mathcal{F}_1
\end{equation*}
\end{lem}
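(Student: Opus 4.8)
The plan is to work entirely on the index set of $\mathcal{I}$, to extract from non-near-coherence a single clean separation statement, and then to assemble $\mathcal{J}$ by a recursion in blocks of four.

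\textbf{Step 1 (separation).} For any interval partition $\mathcal{K}=\{K_m\colon m\in\omega\}$ refining to a coarsening of $\mathcal{I}$, let $\pi_{\mathcal{K}}\colon\omega\to\omega$ be the finite-to-one map with $\pi_{\mathcal{K}}(j)=m\iff j\in K_m$. Since $\mathcal{F}_0,\mathcal{F}_1$ are not nearly coherent, the family $\pi_{\mathcal{K}}[\mathcal{F}_0]\cup\pi_{\mathcal{K}}[\mathcal{F}_1]$ does not generate a proper filter, so there are $Y_0\in\pi_{\mathcal{K}}[\mathcal{F}_0]$ and $Y_1\in\pi_{\mathcal{K}}[\mathcal{F}_1]$ with $Y_0\cap Y_1=\emptyset$. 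Pulling back, $A:=\pi_{\mathcal{K}}^{-1}[Y_0]\in\mathcal{F}_0$ and $B:=\pi_{\mathcal{K}}^{-1}[Y_1]\in\mathcal{F}_1$ are disjoint and each is a union of $\mathcal{K}$-blocks. Thus every $\mathcal{K}$-block meets at most one of $A,B$, and the blocks split into three kinds: those inside $A$, those inside $B$, and the \emph{neutral} ones, disjoint from $A\cup B$. This is the only consequence of the hypothesis I will use, but I would apply it repeatedly, to coarser and coarser partitions.

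\textbf{Step 2 (assembling $\mathcal{J}$).} Having fixed such $A\in\mathcal{F}_0$ and $B\in\mathcal{F}_1$, I would build boundaries $0=n_0<n_1<\cdots$ (indices into $\mathcal{K}$, hence into $\mathcal{I}$), setting $J_t=\bigcup_{n_t\le m<n_{t+1}}K_m$, by recursion in groups of four, maintaining the invariant that every block meeting $A$ gets index $\equiv1\pmod 4$, every block meeting $B$ gets index $\equiv3\pmod4$, and the blocks of index $\equiv0,2\pmod4$ are carved from neutral stretches so they meet neither $A$ nor $B$. Marching left to right, a maximal run of $A$-blocks is swallowed by one $4k+1$-block, the following neutral material provides the $4k+2$-buffer, the next run of $B$-blocks forms the $4k+3$-block, and so on. If this is achieved then $\bigcup_kJ_{4k+1}\supseteq A\in\mathcal{F}_0$ and $\bigcup_kJ_{4k+3}\supseteq B\in\mathcal{F}_1$, which is exactly what the Lemma demands, and each $J_t$ is by construction a union of members of $\mathcal{I}$.

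\textbf{Step 3 (the main obstacle).} The delicate point — and the reason the hypothesis is genuinely needed — is securing a neutral buffer between an $A$-run and the following $B$-run while still catching \emph{all} of $A$ in the color-$1$ slots and \emph{all} of $B$ in the color-$3$ slots. A single separation need not leave any neutral block between adjacent $A$- and $B$-blocks, and one cannot simply shave off boundary blocks to make room: for a general filter, discarding one block from each run may destroy membership in $\mathcal{F}_0$ or $\mathcal{F}_1$, so the captured unions would no longer lie in the filters. The route I would take is to feed Step~1 progressively coarser partitions (e.g. $K_m=I_{2m}\cup I_{2m+1}$, and iterate) and intersect the resulting separators, so that each new typing refines the previous one and the blocks that change type become neutral, liberating buffer material. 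Organizing these re-separations along the recursion — so that at stage $k$ the tail still admits a separation with a neutral block exactly where the next buffer must go — is the heart of the argument; this is precisely the combinatorics isolated by Eisworth in \cite[Corollary 2.3]{todd}, and I would follow that analysis to complete the construction.
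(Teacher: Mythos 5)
The paper itself never proves this lemma --- it is quoted as Eisworth's and sourced to \cite[Corollary 2.3]{todd} --- so the only question is whether your argument stands on its own, and it does not: by your own admission in Step 3, the ``heart of the argument'' is handed back to the very reference the paper cites. Your Step 1 is correct and is indeed the right way to use non-near-coherence (for every coarsening $\mathcal{K}$ of $\mathcal{I}$ one gets disjoint $A\in\mathcal{F}_0$, $B\in\mathcal{F}_1$ that are unions of $\mathcal{K}$-blocks), and the Step 2 march is the right assembly; but what these two steps reduce the lemma to --- guaranteeing a neutral buffer between every $A$-run and the adjacent $B$-run without losing membership in the filters --- is exactly the content of the lemma, and it is left unproved. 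Moreover, the repair you sketch (feed Step 1 progressively coarser partitions and intersect the separators ``along the recursion'') hits a real obstruction if taken literally: filters are not closed under countable intersections, so an infinite sequence of re-separations whose separators you intersect need not leave you with members of $\mathcal{F}_0$ and $\mathcal{F}_1$ at all.

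The missing idea is that \emph{two} applications of your Step 1 suffice, applied to the two staggered pairings of $\mathcal{I}$; no iteration is needed. Apply Step 1 to the coarsening with blocks $E_m=I_{2m}\cup I_{2m+1}$, obtaining disjoint $A_E\in\mathcal{F}_0$, $B_E\in\mathcal{F}_1$ that are unions of $E$-blocks, and again to the shifted coarsening with blocks $O_0=I_0$, $O_{m+1}=I_{2m+1}\cup I_{2m+2}$, obtaining $A_O,B_O$. Set $X_0=A_E\cap A_O\in\mathcal{F}_0$ and $X_1=B_E\cap B_O\in\mathcal{F}_1$ (finite intersections, so filter membership survives). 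If now $I_n$ meets $X_0$ and $I_{n'}$ meets $X_1$ with $|n-n'|\le 1$, then $I_n\cup I_{n'}$ lies inside a single block of $\mathcal{E}$ (when $\min(n,n')$ is even) or of $\mathcal{O}$ (when it is odd), and that block would meet both the $A$-separator and the $B$-separator of the corresponding pairing --- contradicting their disjointness. Hence blocks meeting $X_0$ and blocks meeting $X_1$ are never equal nor adjacent: between any two of them sits a wholly neutral block, which is precisely the buffer your Step 2 needs. Since $X_0$ and $X_1$ are infinite, the greedy grouping now goes through verbatim, after deleting $X_0\cup X_1$ from the first three blocks so that the partition can open with neutral material (harmless: in the near-coherence setting the filters contain all cofinite sets, as they do in the paper's application, so $\bigcup_k J_{4k+1}\supseteq^* X_0$ and $\bigcup_k J_{4k+3}\supseteq^* X_1$ still give membership in $\mathcal{F}_0$ and $\mathcal{F}_1$). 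With that one combinatorial trick inserted, your outline becomes a complete and self-contained proof.
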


The following lemma is essentially \cite[Theorem 1]{todd}.

\begin{lem}
Let $\mathcal{U}$ be a selective ultrafilter, let $\mathcal{F}$ be a P-filter, and suppose that they are not nearly coherent. Then Adam has no winning strategy in the game $\mathcal{G}(\mathcal{U},\mathcal{F})$.
\end{lem}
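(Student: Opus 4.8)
The plan is to argue by contradiction: suppose Adam had a winning strategy $\sigma$ in $\mathcal{G}(\mathcal{U},\mathcal{F})$, and construct a single run of the game consistent with $\sigma$ in which Eve nevertheless wins. The difficulty is that Adam's move at any round is, via $\sigma$, a function of the entire finite history of Eve's choices, so before Eve can exploit the combinatorics of $\mathcal{U}$ and $\mathcal{F}$ we must first \emph{tame} $\sigma$ along a carefully chosen interval partition.

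First I would run a fusion to build an interval partition $\mathcal{I}=\{I_k\colon k\in\omega\}$ of $\omega$ together with a set $U\in\mathcal{U}$ and a set $B\in\mathcal{F}$ enjoying the following taming property: whenever a position $p$ is reached in which every number Eve has played so far lies below $\min I_k$, Adam's prescribed answer $\sigma(p)$ contains $U\setminus\min I_k$ if it is an even (i.e. $\mathcal{U}$-)round, and contains $B\setminus\min I_k$ if it is an odd (i.e. $\mathcal{F}$-)round. This is possible because at stage $k$ only finitely many positions $p$ have all of Eve's previous moves confined to $[0,\min I_k)$; intersecting the finitely many sets $\sigma(p)$ arising there gives a set in $\mathcal{U}$ on the even rounds and a set in $\mathcal{F}$ on the odd rounds. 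Diagonalizing the two resulting decreasing sequences, I use the selectivity of $\mathcal{U}$ to extract $U\in\mathcal{U}$ whose tail past $\min I_k$ lies inside each relevant even-round set, and the defining P-filter property of $\mathcal{F}$ (a pseudo-intersection in $\mathcal{F}$, cf. also Theorem \ref{laflamme}) to extract $B\in\mathcal{F}$ with the analogous odd-round containments, possibly after reindexing the partition. Finally, using selectivity once more, I thin $U$ to a transversal $U'\in\mathcal{U}$ meeting each $I_k$ in at most one point; as a subset of $U$ it still satisfies the taming containments. After this fusion, any rightward-moving play in which Eve picks, at even rounds, elements of $U'$ and, at odd rounds, finite subsets of $B$ lying above the current interval is automatically legal against $\sigma$.

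Next I would invoke the hypothesis that $\mathcal{U}$ and $\mathcal{F}$ are not nearly coherent. Feeding $\mathcal{I}$ into Lemma \ref{todd_lemma} (with $\mathcal{F}_0=\mathcal{U}$, $\mathcal{F}_1=\mathcal{F}$) produces a coarser partition $\mathcal{J}=\{J_n\}$, each $J_n$ a union of blocks of $\mathcal{I}$, with $\bigcup_k J_{4k+1}\in\mathcal{U}$ and $\bigcup_k J_{4k+3}\in\mathcal{F}$, the intervals $J_{4k}$ and $J_{4k+2}$ serving as buffers. I now let Eve process the blocks from left to right, spending finitely many rounds on each: on a block $J_{4k+1}$ she uses her even rounds to play the elements of the transversal $U'$ lying in that block and plays trivially at the intervening odd rounds; on a block $J_{4k+3}$ she uses her odd rounds to play the finite sets $B\cap I$ for the $\mathcal{I}$-intervals $I$ constituting the block and plays trivially at the intervening even rounds; on the buffer blocks she plays trivially throughout. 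By the taming property every move is legal against $\sigma$. The set of Eve's even-round picks then contains $U'\cap\bigcup_k J_{4k+1}\in\mathcal{U}$, hence lies in $\mathcal{U}$, while the union of her odd-round picks contains $B\cap\bigcup_k J_{4k+3}\in\mathcal{F}$, hence lies in $\mathcal{F}$ (filters being upward closed, the trivial moves do no harm). Thus Eve wins this $\sigma$-run, contradicting the assumption that $\sigma$ is winning.

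I expect the fusion of the second paragraph to be the main obstacle. The delicate point is to coordinate the two quite different selection mechanisms against a single strategy $\sigma$ whose answers on the two coordinates are entangled through the shared history: the selective (Ramsey) diagonalization that forces the aggregate of Eve's sparse, one-per-round even picks to be a genuine member of $\mathcal{U}$, and the P-filter control that forces the union of her finite odd picks to be a member of $\mathcal{F}$. Producing one partition $\mathcal{I}$ and one pair $U',B$ that simultaneously tame both coordinates, so that Lemma \ref{todd_lemma} can then cleanly separate them into disjoint streams of resources, together with the bookkeeping that synchronizes the allocation of rounds to blocks, is where the real work — and the essential joint use of selectivity and non-near-coherence — is concentrated.
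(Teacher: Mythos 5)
Your overall skeleton --- taming Adam's fixed strategy via pseudointersections (selectivity on the $\mathcal{U}$-side, the P-filter property on the $\mathcal{F}$-side) along a fast-growing interval partition, then feeding that partition into Lemma \ref{todd_lemma} and letting Eve harvest inside the blocks $J_{4k+1}$ and $J_{4k+3}$ --- is exactly the argument of the paper. The gap is in your final scheduling, and as stated it is fatal. The game alternates strictly: each even-round pick of Eve must be followed by an odd-round pick before she may pick a number again. Your plan has Eve spend several even rounds on a single block $J_{4k+1}$, playing the points of the transversal $U'$ one per $\mathcal{I}$-interval, with ``trivial'' odd moves in between. But a trivial move is not free: Eve must play a nonempty finite subset of whatever set Adam's (fixed) strategy prescribes at that position, and your taming property only guarantees that this set contains the tail of $B$ past the current confinement threshold --- nothing below it. So in the worst case her cheapest legal filler move lies in the very next $\mathcal{I}$-interval, and after she plays it the threshold has advanced past that interval; the taming property then no longer certifies that the next transversal point, which lies in the interval just skipped, belongs to Adam's next answer. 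Hence Eve can provably collect at most every other point of $U'$ within each block, and her total even-round harvest is an ``every-other-interval'' thinning of $U'\cap\bigcup_k J_{4k+1}$. A proper thinning of a member of $\mathcal{U}$ need not belong to $\mathcal{U}$, so the claimed containment --- and with it the win condition --- fails. The analogous off-by-one problem afflicts your odd-round processing of the blocks $J_{4k+3}$.

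The repair is precisely where the paper puts selectivity: apply it \emph{after} Lemma \ref{todd_lemma}, at the scale of the blocks $J_n$ rather than of $\mathcal{I}$, to obtain $A\in\mathcal{U}$, $A\subseteq U$, such that $A\cap J_{4k+1}$ is exactly a singleton $\{a_k\}$ and $A$ meets no other blocks. Then each stage of the game consists of exactly one even round, at which Eve plays $a_k$, and one odd round, at which she plays the single finite set $B\cap J_{4k+3}$; note that one odd-round move may span an entire block, since Adam's tamed answer contains the whole tail of $B$, so no repeated odd rounds and no filler moves are needed at all. The buffer blocks $J_{4k}$ and $J_{4k+2}$ absorb the one-interval threshold advance caused by each pick, every move is legal, and the harvests are all of $A\in\mathcal{U}$ and all of $B\cap\bigcup_k J_{4k+3}\in\mathcal{F}$. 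Without this block-level use of selectivity --- which is the one essential point your write-up replaces by the too-fine transversal of $\mathcal{I}$ --- the run you describe cannot be completed against a worst-case strategy.
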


\begin{proof}
	Let $\Sigma$ be a strategy for Adam. We will produce a run of the game in which he uses $\Sigma$ but nevertheless, Eve wins the game. We think of $\Sigma$ in a natural way as a tree such that each $\sigma \in \Sigma$ is a sequence of the
	form $(A_0, a_0, B_0, F_0, A_1, \dots, A_n)$ or $(A_0, a_0, B_0, F_0, A_1, \dots, B_n)$, where $A_i$, $a_i$, $B_i$, $F_i$ are as described in Definition \ref{Eisworth-game}. 

	Note that $\Sigma$ is a countable tree, so there is $A_\omega\in\mathcal{U}$ which is a pseudointersection of all the elements of $\mathcal{U}$ appearing in $\Sigma$ and, similarly, there is $B_\omega\in \mathcal{F}$ which is a pseudointersection
	of all the elements of $\mathcal{F}$ which appeared in $\Sigma$. We can assume that $A_\omega \subseteq A_0$, the first move of Adam.

	
Fix $n\in \omega$. Notice that there are finitely many $\sigma\in\Sigma$ such that every Eve's move in $\sigma$ is a subset of $n$; we say that such $\sigma$ has rank $\leq n$. Denote by $\Sigma_n$ all the elements of $\Sigma$ having rank $\leq n$.
	Furthermore, let $E_n$ denote the collection of sets $A\in\mathcal{U}$ such that there is a $\sigma\in\Sigma_n$ in which the set $A$ appears. Similarly, denote by $O_n$ the collection of sets $B\in\mathcal{F}$ such that there is
	$\sigma\in\Sigma_n$ in which the set $B$ appears. Notice that both $E_n$ and $O_n$ are finite. 
	
Now we define a sequence of natural numbers $\{k_n\colon n\in\omega\}$ as follows:
\begin{enumerate}
\item $k_0=0$ and $k_1$ is such that $A_0 \cap k_1\neq\emptyset$.
\item Assume $k_i$ has been defined. Then choose $k_{i+1}$ such that $A_\omega\cap [k_i, k_{i+1}) \ne \emptyset$, $B_\omega\cap [k_i, k_{i+1}) \ne \emptyset$ and for all $A\in E_{k_i}$, $A_\omega\setminus k_{i+1}\subseteq A$, and for all $B\in O_{k_i}$, $ B_\omega\setminus k_{i+1}\subseteq B$.
\end{enumerate}

Having defined the sequence, consider the interval partition given by $I_n=[k_n,k_{n+1})$. Apply Lemma \ref{todd_lemma} to get a partition $\{J_n\colon n\in\omega\}$ such that each $J_n$ is union of finitely many consecutive intervals $I_k$, and such that
\begin{equation*}
    \bigcup_{k\in\omega} J_{4k+1}\in\mathcal{U}\text{   and   }    \bigcup_{k\in\omega} J_{4k+3}\in\mathcal{F}.
\end{equation*}

	We may assume that $k_1<\min(J_1)$, and so $k_2\leq \min(J_1)$ (the minimum of each interval $J_n$ is of the form $k_l$). Since $\mathcal{U}$ is a selective ultrafilter, and $A_\omega \cap I_n \ne \emptyset$ for each $n$ we can find $A=\{a_n\colon n\in\omega\}\in\mathcal{U}$, $A\subseteq
	A_\omega$ such that
	$A\cap J_{4n+1}=\{a_n\}$ for all $n\in\omega$. Let $B=B_\omega\cap \bigcup_{n\in\omega}J_{4n+3}$. We claim that there is a run of the game in which Adam follows $\Sigma$ and Eve constructs the sets $A$ and $B$ along. Clearly, this suffices to
	prove that $\Sigma$ is not a winning strategy, as $A\in \mathcal{U}$ and $B\in \mathcal{F}$. Eve should play as follows:
\begin{enumerate}
	\item After Adam has played $A_0$, Eve plays $a_0$. Notice that it is a legal move (i.e. $a_0 \in A_0$) as $a_0 \in A_\omega $  and $A_\omega \subseteq A_0$.
\item Then Adam answers with $B_0$. Note that $\sigma=(A_0,a_0,B_0)$ has rank $\leq \max(J_1)+1=\min(J_{2})$, so $B_0\in O_{\min(J_{2})}$, which implies that $B_\omega\setminus \min(J_3)\subseteq B_0$, so $B_\omega\cap J_3\subseteq B_0$. Then Eve can
	play $F_0=B_\omega\cap J_3=B\cap J_3$ which is non-empty by the choice of the sequence $(k_n)$.
\item Let $A_1$ be the answer of Adam. The sequence $(A_0,a_0,B_0,F_0,A_1)$ has rank $\leq \min(J_4)$, so $A_1\in E_{\min(J_4)}$, which implies that $A_\omega\setminus \min(J_5)\subseteq A_1$, and thus, $a_1\in A_1$, which makes it a legal (and
	recommended) move for Eve. 
\item Eve continues playing $a_n$ and $F_n = B_\omega \cap J_{4n+3}$ accordingly.
\end{enumerate}
Note that $\bigcup_{n\in\omega}F_n=B$, and clearly $A=\{a_n\colon n\in\omega\}$.
\end{proof}

The following forcing notion was introduced by Shelah to prove Theorem \ref{unique_Shelah}.

\begin{df}
For each natural number $n$, let $S_n=\Pi_{k\leq n}2^{n}$ (note that here by $2^n$ we mean the set of sequences of bits having length $n$; to avoid misunderstandings in what follows we will denote the elements of $2^n$ by Roman letters $s, t, \dots$
	and the elements of $S_n$ by Greek letters $\sigma, \tau, ..$), and $S=\bigcup_{n\in\omega}S_n$. A tree $T\subseteq S$ is a family closed under initial segments. For a tree
	$T\subseteq S$, $\tau \in T$, and $n\in\omega$, define 
	\begin{itemize}
		\item $T^{[n]}=\{\sigma\in T\colon \vert \sigma\vert=n\}$,
		\item $succ_T(\sigma) = \{s\in 2^{|\sigma|} \colon \sigma^\frown s\in T\}$,
		\item $T\upharpoonright \tau=\{\sigma \in T\colon \tau \subseteq \sigma\vee \sigma \subseteq \tau\}$.
	\end{itemize}
Let $\mathcal{F}$ be a filter. For a tree $T\subseteq S$ and $k\in\omega$, define
\begin{equation*}
spt_{k}(T)=\{n\in\omega\colon \forall \tau \in T^{[n]} \ \forall s\in 2^k \ \exists t\in succ_T(\tau) \ s\subseteq t\}.
\end{equation*}

The forcing $SP^*(\mathcal{F})$ is defined as the collection of all trees $T\subseteq S$ such that for all $k\in\omega$, $spt_{k}(T)\in\mathcal{F}$. The order is given by set inclusion.
\end{df}

Notice that if $T\in SP^*(\mathcal{F})$ and $\tau \in T$, then $T\upharpoonright \tau \in SP^*(\mathcal{F})$.

\begin{rem}\label{sp-generic} The conditions of $SP^*(\mathcal{F})$ are slightly complicated so maybe this is a good moment to see how the forcing actually 'works'. Let $G$ be $SP^*(\mathcal{F})$-generic, and define $g \in [S]$ to be such that $\bigcup\bigcap
	G = \{g\}$. 
We can see it as a sequence of reals: for each natural number define $\dot{r}_n\colon \omega\setminus(n+1)\to 2$ as \[ \dot{r}_n(k)=1 \mbox{ if and only if }\dot{g}(k)(n)=1. \] Actually, we will rather treat it as a sequence of subsets of $\omega$:
	\[ \dot{R}_n=\{k\in\omega\colon \dot{r}_n(k)=1\}.\] A simple density argument shows that for all $n\in\omega$ we have
	$\dot{R}_n,\omega\setminus \dot{R}_n\in\mathcal{F}^+$. The family $\{\dot{R}_n\colon n\in \omega\}$ will be used in a crucial step of the proof.
\end{rem}

\begin{prop}[Shelah, see \cite{pif}]\label{omega-omega}
If $\mathcal{F}$ is a non-meager P-filter, the forcing $SP^*(\mathcal{F})$ is proper and $\omega^\omega$-bounding.
\end{prop}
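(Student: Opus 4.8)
The plan is to prove both properties at once through a single fusion argument, using Laflamme's game characterization (Theorem \ref{laflamme}) as the engine that keeps the limit of a fusion inside the forcing. First I would set up the fusion machinery: fix for each condition $T$ a cofinal set of ``control levels'' and say $T' \le_m T$ if $T' \le T$, the two trees agree below the $m$-th control level, and the first $m$ designated full-splitting levels of $T$ are retained in $T'$. A $\le_m$-decreasing chain $T_0 \ge_0 T_1 \ge_1 \cdots$ then stabilizes on longer and longer initial parts, so $T_\omega = \bigcap_m T_m$ is a well-defined subtree. The only thing that can go wrong—and this is the main obstacle—is that $T_\omega$ fails to be a condition, i.e. that $spt_k(T_\omega) \notin \mathcal{F}$ for some $k$.

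This is exactly where the hypotheses on $\mathcal{F}$ must be used, and where I would invoke the game. During the fusion I keep, at stage $m$, a chosen full-$k_m$-splitting level $\ell_m$ (with $k_m \to \infty$) that survives into $T_\omega$. Since $spt_{k'}(T) \subseteq spt_k(T)$ for $k' \ge k$, if the set $L = \{\ell_m : m \in \omega\}$ belongs to $\mathcal{F}$ then for every fixed $k$ we get $spt_k(T_\omega) \supseteq \{\ell_m : k_m \ge k\} =^{*} L$, whence $spt_k(T_\omega) \in \mathcal{F}$ and $T_\omega$ is a genuine condition. To force $L \in \mathcal{F}$ I read the construction as a play of $\mathcal{G}(\mathcal{F})$: when the tree built so far is $T_m$, the set $spt_{k_m}(T_m) \in \mathcal{F}$ of still-available full-splitting levels (minus the finitely many used so far) is Adam's move $A_m$, and the finitely many levels I keep at that stage are Eve's response $F_m \subseteq A_m$. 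Since the schedule of which name or dense set is addressed at stage $m$ is fixed in advance, the map sending Eve's past responses to the refined tree $T_m$ and thence to $A_m$ is a \emph{strategy} $\Sigma$ for Adam. By Theorem \ref{laflamme}, because $\mathcal{F}$ is a non-meager P-filter, $\Sigma$ is not winning, so there is a run following $\Sigma$ in which Eve wins, i.e. $L = \bigcup_m F_m \in \mathcal{F}$; along that run I read off the fusion $T_\omega$.

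To make the fusion do useful work I need a pure-decision lemma: given a condition $T$ and a name $\dot f$ for an element of $\omega^\omega$, there is $T' \le_m T$ and a level $\ell$ such that the value of $\dot f(n)$ depends only on the restriction of the generic branch to level $\ell$. As $T'$ is finitely branching below $\ell$, this leaves only finitely many possible values for $\dot f(n)$. The content here is that one may thin the successors of the relevant nodes so as to decide $\dot f(n)$ while keeping each $spt_k$ in $\mathcal{F}$; this is again where non-meagerness is essential, since it prevents the finitely many ``bad'' level-sets coming from undecided branches from swallowing an $\mathcal{F}^{+}$-set. Interleaving these decision steps into the fusion and setting $g(n) = 1 + \max\{\text{possible values of } \dot f(n)\}$ yields $T_\omega \Vdash \dot f(n) < g(n)$ for all $n$ with $g \in V$, which is $\omega^\omega$-bounding.

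Finally, properness comes from the same fusion. Given a countable $M \prec H(\theta)$ with $SP^*(\mathcal{F}), \mathcal{F} \in M$ and $T \in SP^*(\mathcal{F}) \cap M$, enumerate the dense subsets of the forcing lying in $M$, and at stage $m$ refine inside $M$ so that every sufficiently high node of the current tree is forced into the $m$-th dense set; running the game argument of the second paragraph with all data inside $M$ (so that $\Sigma$ and the partial trees are in $M$, and $L$ arises from a winning run captured by the countable $M$) produces a fusion $T_\omega \le T$ that is $(M, SP^*(\mathcal{F}))$-generic. Throughout, I expect the genuinely delicate point to be precisely the preservation of $spt_k(\cdot) \in \mathcal{F}$ across infinitely many refinements, which is exactly why the non-meager P-filter hypothesis—packaged as the failure of a winning strategy for Adam—is indispensable.
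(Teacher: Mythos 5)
Your proposal is correct and follows essentially the same route as the paper: the paper likewise reads the fusion as a strategy for Adam in Laflamme's game $\mathcal{G}(\mathcal{F})$ --- deciding $\dot{f}(k)$ piecewise on the finitely many nodes at a fixed level, amalgamating the pieces, letting Eve's finite responses be the splitting levels to be preserved --- and then invokes Theorem \ref{laflamme} to obtain a run in which Eve wins, so that the fusion $p_\omega$ satisfies $spt_k(p_\omega) \supseteq \bigcup_{l\geq k} F_l$ and is therefore a condition forcing $\dot{f}(k)\in W_k$ for finite sets $W_k$ from the ground model, with properness obtained by the same fusion run inside a countable elementary submodel. One minor correction: non-meagerness plays no role in your pure-decision step --- each piece $q_\sigma \leq T\upharpoonright\sigma$ is itself a condition, so keeping every $spt_k$ in $\mathcal{F}$ uses only closure of $\mathcal{F}$ under finite intersections and finite modifications --- the hypothesis enters the proof solely through Laflamme's game characterization.
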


\begin{proof}
We only prove that $SP^*(\mathcal{F})$ is $\omega^\omega$-bounding, the proof for properness follows similar lines, just choosing an elementary submodel as usual and taking care that each step of the construction lives inside the submodel. So let $p\in
	SP^*(\mathcal{F})$ be a condition and let $\dot{f}$ be a name for a function from $\omega$ to $\omega$. We are going to find a sequence of finite sets $\{W_n\colon n\in\omega\}$ and $q\leq p$ such that  $q\Vdash \dot{f}(n)\in W_n$ for each
	$n\in\omega$. To do so, we give a strategy to Adam in the P-filter game $\mathcal{G}(\mathcal{F})$.

\begin{itemize}
\item Adam starts by extending $p$ to a condition $p_0\leq p$ which decides the value of $\dot{f}(0)$, and plays the set $spt_1(p_0)$. 
\item Eve answers with a finite set $F_0\subseteq spt_1(p_0)$. Then Adam chooses a natural number $n_0>\max(F_0)$. Define $W_0=\{l\}$, where $l$ is such that $p_0\Vdash \dot{f}(0)=l$.
\item At round $k$, suppose Eve has previously played a set $F_{k-1}$, Adam has chosen $n_{k-1}>\max(F_{k-1})$ and defined conditions $\{p_{i}\colon i< k\}$ and finite set $\{W_i\colon i< k\}$ such that $p_i\Vdash \dot{f}(i)\in W_i$. Now, for each
	node $\sigma \in p_{k-1}^{[n_{k-1}]}$, let $q_s\leq p_{k-1}\upharpoonright \sigma$ be a condition which decides the value of $\dot{f}(k)$, and let Adam play the set $\bigcap \{spt_{k+1}(q_\sigma)\colon {\sigma \in p_{k-1}^{[n_{k-1}]}}\}$.
\item Eve answers with a finite set $F_k$. Then Adam chooses a natural number $n_k>\max(F_k)$, defines $p_k=\bigcup \{q_\sigma \colon {\sigma \in p_{k-1}^{[n_{k-1}]}}\}$, and \[ W_k=\{l\in\omega\colon \exists \sigma\in p_{k-1}^{[n_{k-1}]} \ q_\sigma \Vdash \dot{f}(k)=l
	\}.\]
		Notice that $F_k\subseteq spt_k(p_k)$ and $p_k \Vdash \dot{f}(k) \in W_k$.
\end{itemize}

This is not a winning strategy for Adam, because of Theorem \ref{laflamme}. So, there is a run of the game in which Eve wins. Let $\{p_n\colon n\in\omega\}$ and $\{W_n\colon n\in\omega\}$ be the sequences constructed along such a run. Define
$p_\omega=\bigcap_{n\in\omega} p_n$. Notice that $p_\omega^{[i]} = p_n^{[i]}$ for every $i\leq n$. So, as $n_k > \max F_k$ for each $k$, we have that $\bigcup_{l\ge k} F_l\subseteq spt_{k}(p_\omega)$ for each $k$. Hence, $p_\omega$ is a condition.
	Also, since $p_\omega \leq p_k$ for
each $k$, we have $p_\omega \Vdash \dot{f}(k)\in W_k$ and we are done.
	 
\end{proof}

The following is essentially a part of the proof of Shelah's theorem on the consistency of the existence of only one selective ultrafilter up to permutation, we include the proof for completeness.

\begin{lem}[S. Shelah, see \cite{pif}]\label{preservation_destruction}
Let $\mathcal{U}$ be a selective ultrafilter and $\mathcal{F}$ be a non-meager P-filter such that $\mathcal{U}$ and $\mathcal{F}$ are not nearly coherent. Let $\dot{\mathbb{Q}}$ be a $SP^*(\mathcal{F})$-name for a proper $\omega^\omega$-bounding forcing. Then:
\begin{enumerate}
\item $SP^*(\mathcal{F})$ preserves $\mathcal{U}$ as a selective ultrafilter.
\item $SP^*(\mathcal{F})\ast \dot{\mathbb{Q}}$ forces that $\mathcal{F}$ can not be extended to a P-point.
\end{enumerate}
\end{lem}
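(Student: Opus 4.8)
The plan is to prove the two clauses separately, in each case running a fusion in $SP^*(\mathcal{F})$ that is driven by a game from the previous lemmas.

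\textbf{Clause (1).} It is enough to show that for every $SP^*(\mathcal{F})$-name $\dot f$ for an element of $\omega^\omega$ and every condition $p$ there are $A\in\mathcal{U}$ and $q\leq p$ with $q\Vdash\dot f\upharpoonright A$ is constant or one-to-one: applied to names for characteristic functions this already gives that $\mathcal{U}$ generates an ultrafilter (a constant value on $A$ means $A\subseteq\dot X$ or $A\cap\dot X=\emptyset$), and the general case is exactly selectivity in the extension. To produce $A$ and $q$ I would hand Adam a strategy in the game $\mathcal{G}(\mathcal{U},\mathcal{F})$ which runs a fusion below $p$ as in the proof of Proposition \ref{omega-omega}, now with an extra $\mathcal{U}$-coordinate. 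At an even round Adam plays a set $A_n\in\mathcal{U}$ of domain positions not yet treated, Eve answers with $a_n\in A_n$, and Adam passes to the finitely many nodes $\sigma$ of the current tree at the relevant level and extends each to $q_\sigma$ deciding the value of $\dot f(a_n)$; at the following odd round he plays $\bigcap_\sigma spt_k(q_\sigma)\in\mathcal{F}$, Eve answers with a finite $F_n$, and the branching levels of the amalgamated condition $p_{n+1}=\bigcup_\sigma q_\sigma$ are chosen from $F_n$. Because $\mathcal{U}$ and $\mathcal{F}$ are not nearly coherent, the previous lemma says this strategy is not winning, so some run has $A=\{a_n\colon n\in\omega\}\in\mathcal{U}$ and $\bigcup_n F_n\in\mathcal{F}$; then $q=\bigcap_n p_n$ is a genuine condition (each $spt_k(q)$ contains a tail of $\bigcup_n F_n$), $q\leq p$, and along every branch of $q$ the value $\dot f(a_n)$ lies in the ground-model finite set of values decided at the nodes above. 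A final Ramsey-style thinning of $A$ inside $\mathcal{U}$, using that $\mathcal{U}$ is selective in $V$, then yields $A'\in\mathcal{U}$ and $q'\leq q$ forcing $\dot f\upharpoonright A'$ to be constant or injective.

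\textbf{Clause (2).} Here I would argue by contradiction using the generic sets $\dot R_n$ of Remark \ref{sp-generic}, which satisfy $\dot R_n,\omega\setminus\dot R_n\in\mathcal{F}^+$. Suppose $\dot{\mathcal{W}}$ is forced by $SP^*(\mathcal{F})\ast\dot{\mathbb{Q}}$ to be a P-point extending $\mathcal{F}$. As $\mathcal{W}$ is an ultrafilter it selects for each $n$ a side $R_n^{\epsilon(n)}\in\mathcal{W}$, and as it is a P-point the countable family $\{R_n^{\epsilon(n)}\colon n\in\omega\}$ has a pseudointersection $W\in\mathcal{W}$; thus $W\in\mathcal{F}^+$ and $W\subseteq^* R_n^{\epsilon(n)}$, i.e. $W$ is almost disjoint from $\omega\setminus R_n^{\epsilon(n)}$, for every $n$. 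Since $SP^*(\mathcal{F})$ and $\dot{\mathbb{Q}}$ are both $\omega^\omega$-bounding the two-step iteration is too, so the function recording the finite errors of the inclusions $W\subseteq^* R_n^{\epsilon(n)}$ is dominated by a ground-model function. I would then derive a contradiction by a genericity/fusion argument in $SP^*(\mathcal{F})$: since $spt_k(T)\in\mathcal{F}$ for every condition $T$ and every $k$, on an $\mathcal{F}$-large set of levels the generic tree realizes all $2^k$ combinations of the first reals $\dot R_0,\dots,\dot R_{k-1}$, and this independence lets me construct a set $F\in\mathcal{F}$ forced to meet $W$ only finitely often. But $F\in\mathcal{F}\subseteq\mathcal{W}$ and $W\in\mathcal{W}$, contradicting that $\mathcal{W}$ is a non-principal filter.

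The main obstacle lies in Clause (2): one must perform the splitting of $W$ against the tree $SP^*(\mathcal{F})$ uniformly enough that a single ground-model fusion, handling the possibly new selection $\epsilon$ and the possibly new pseudointersection $W$, survives the passage through the second iterand $\dot{\mathbb{Q}}$ — about which one assumes only properness and $\omega^\omega$-bounding. This is exactly where the domination supplied by $\omega^\omega$-bounding is indispensable. In Clause (1) the corresponding delicate point is milder: it is the coordination of the two components of the game, namely checking that Adam's node-by-node extensions deciding $\dot f$ amalgamate into a legitimate condition whose support is governed by Eve's $\mathcal{F}$-moves, while the $\mathcal{U}$-moves deliver a set on which the selectivity of $\mathcal{U}$ can be applied.
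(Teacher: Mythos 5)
Your overall architecture (game-driven fusions; the Eisworth game for clause (1); the generic sets $\dot R_n$ for clause (2)) is the same as the paper's, but in both clauses the step carrying the actual combinatorial weight is missing or fails as stated.

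\textbf{Clause (1).} The problem is what Adam plays at the $\mathcal{U}$-rounds. The paper first reduces to names $\dot x$ for \emph{subsets} of $\omega$ and case-splits: either some $q\le p$ has $\dot x[q]=\{n\colon \exists r\le q\ r\Vdash n\in\dot x\}\notin\mathcal{U}$ (and we are done), or else Adam's $\mathcal{U}$-moves are the sets $\bigcap_{\sigma} \dot x[p_{k-1}\upharpoonright\sigma]$, intersected over the finitely many current nodes. That choice is the whole point: since Eve's $a_k$ comes from this intersection, \emph{every} node can be extended to force the \emph{same} $a_k$ into $\dot x$, so the fusion forces $A\subseteq \dot x$. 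In your strategy Adam plays essentially arbitrary ("not yet treated") sets of $\mathcal{U}$, so different nodes may decide $\dot f(a_n)$ to be different values, and all you obtain is that $\dot f(a_n)$ lies in a finite, node-dependent set. For a characteristic function this confinement (to $\{0,1\}$) says nothing, so ultrafilterness does not follow; and the concluding "Ramsey-style thinning" cannot repair it: coloring pairs by whether some common branch gives equal values and applying selectivity, the homogeneous-for-agreement side yields neither forced constancy nor forced injectivity (agreement on \emph{some} branch for each pair is compatible with injectivity on other branches), while pruning $q$ to homogenize values can destroy the requirement $spt_k(q)\in\mathcal{F}$, i.e.\ condition-hood. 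The paper sidesteps all of this: it proves only that $\mathcal{U}$ generates an ultrafilter, and selectivity of the generated ultrafilter then follows from properness and $\omega^\omega$-bounding of $SP^*(\mathcal{F})$ (Proposition \ref{omega-omega}) by standard preservation arguments.

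\textbf{Clause (2).} Here the missing idea is how a single ground-model fusion defeats the \emph{name} $\epsilon$ (the paper's $\dot h$). The value $\epsilon(n)$ --- which of $\dot R_n$, $\omega\setminus\dot R_n$ lies in $\dot{\mathcal W}$ --- is a name for the \emph{two-step} extension; it cannot be decided cofinally along the fusion, because only the first coordinate can be extended infinitely often (the second iterand $\dot{\mathbb{Q}}$ is merely proper, so an infinite decreasing sequence of second coordinates need not have a lower bound). And for a fixed single $n$, no pruning of the tree forces a prospective point of your set $F$ out of $R_n^{\epsilon(n)}$: whatever the tree does, the point lands in exactly one of $\dot R_n$, $\omega\setminus \dot R_n$, and you do not know which one $\dot{\mathcal W}$ selected. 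So "independence lets me construct $F\in\mathcal{F}$ meeting $W$ finitely" cannot work coordinate by coordinate. What the paper does instead, and what your sketch lacks, is: case-split on whether $\dot h$ is forced eventually constant; in the non-constant case use $\omega^\omega$-bounding of the iteration to fix ground-model intervals $[f(n),f(n+1))$ on which $\dot h$ is \emph{forced} to take both values, pass to the finite intersections $\dot W_n=\bigcap_{j\in[f(n),f(n+1))}\dot R_j^{\dot h(j)}$, and at each of Eve's points set the generic to $0$ on the whole interval of coordinates --- then, whatever $\dot h$ is, the point avoids $\dot R_j$ for the $j$ in the interval with $\dot h(j)=1$, hence avoids $\dot W_n$. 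Finally, your $F$ must be produced as $\bigcup_n F_n$ along a run of Laflamme's P-filter game $\mathcal{G}(\mathcal{F})$ in which Eve wins, which is exactly where the hypothesis that $\mathcal{F}$ is non-meager enters (Theorem \ref{laflamme}); your proposal never invokes non-meagerness, leaving an essential hypothesis unused.
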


\begin{proof}
We first prove (1). Let $\dot{x}$ be a name for a subset of $\omega$ and let $p\in SP^*(\mathcal{F})$. 
	We are going to show that there is $q\leq p$ such that 
	\begin{enumerate}
		\item[(i)] $q \Vdash \dot{x}$ is disjoint with an element of $\mathcal{U}$ or 
		\item[(ii)] $q\Vdash \dot{x}$ has a subset belonging to $\mathcal{U}$. 
	\end{enumerate}
	It will mean that $\mathcal{U}$ generates an ultrafilter in the
	forcing extension, and we will be done.
	
	For $q\in SP^*(\mathcal{F})$ define $\dot{x}[q]=\{n\in\omega\colon \exists r\leq q \ r\Vdash n\in\dot{x}\}$. If there is a condition $q\leq p$ such that $\dot{x}[q]\notin \mathcal{U}$, then we are in the situation (i). So we may assume that for
	all $q\leq p$ $\dot{x}[q]\in\mathcal{U}$. We give a strategy for Adam in the game $\mathcal{G}(\mathcal{U},\mathcal{F})$. On the side, Adam will construct a decreasing sequence of conditions $\{p_n\colon n\in\omega\}$ and a sequence of natural
	numbers $\{a_n\colon n\in\omega\} = A \in \mathcal{U}$, and after a run of the game in which Eve wins, there will be a lower bound $p_\omega$ of $\{p_n\colon n\in \omega\}$ and $p_\omega \Vdash A \subseteq \dot{x}$.

We dictate to Adam the following strategy.
\begin{itemize}
\item Adam starts by playing $\dot{x}[p]$. Then Eve chooses $a_0\in \dot{x}[p]$. Now Adam extends $p$ to a condition $p_0 \Vdash a_0\in\dot{x}$, and plays the set $spt_1(p_0)$. Then Eve answers with a finite set $F_0\subseteq spt_1(p_0)$ and Adam
	chooses a natural number $n_0>\max(F_0)$.
\item At stage $k$, after Eve has played $F_{k-1}$, Adam has chosen $n_{k-1}$ and has defined $p_{k-1}$, we proceed as follows. Adam plays the set 
\begin{equation*}
A_k=\bigcap\{\dot{x}[p_{k-1}\upharpoonright \sigma]\colon \sigma \in p_{k-1}^{[n_{k-1}]}\}.
\end{equation*}

Then Eve answers with $a_k\in A_k$. For each $\sigma \in p_{k-1}^{[n_{k-1}]}$, Adam extends the condition $p_{k-1}\upharpoonright \sigma$ to a condition $q_\sigma$ which forces $a_k\in\dot{x}$. Then he plays the set
\begin{equation*}
    B_k=\bigcap\{spt_{k+1}(q_\sigma)\colon \sigma \in p_{k-1}^{[n_{k-1}]}\}
\end{equation*}
Then Eve answers with a finite set $F_k\subseteq B_k$, and Adam chooses $n_k>\max(F_k)$. Finally, he defines $p_k=\bigcup\{q_\sigma \colon \sigma \in p_{k-1}^{[n_{k-1}]}\}$. Note that $p_k \Vdash a_k\in\dot{x}$.
\end{itemize}

	By Theorem \ref{laflamme} the above strategy is not winning. Let $A = \{a_k\colon k\in\omega\}$, $\{F_k\colon k\in\omega\}$ and $\{p_k\colon k\in\omega\}$ be the sequences constructed along a run of the game in which Eve wins. Define
	$p_\omega=\bigcap_{k\in\omega}p_n$. Similarly as in the proof of Proposition \ref{omega-omega} we have that $p_\omega \in SP^*(\mathcal{F})$. We also have that $p_\omega \Vdash A \subseteq \dot{x}$, and $A\in \mathcal{U}$. So, we are in the situation
	(ii), for $q=p_\omega$. This finishes the proof of (1).
\medskip

	We now prove (2). We will use the terminology of Remark \ref{sp-generic}. Let $\dot{\mathcal{V}}$ be a $SP^*(\mathcal{F})\ast \dot{\mathbb{Q}}$-name for an ultrafilter extending $\mathcal{F}$, and define $\dot{h}\colon \omega\to 2$ as
	$\dot{h}(n)=1$ if and only if $\dot{R}_n\in\dot{\mathcal{V}}$. So, \[ \dot{R}_n^{\dot{h}(n)}\in\dot{\mathcal{V}} \]
	for every $n$.

Fix a condition $(p,\dot{q})\in SP^*(\mathcal{F})\ast \dot{\mathbb{Q}}$. We have two cases: (a) there is  $(p_0,\dot{q}_0)\leq (p_0,\dot{q_0})$ which forces $\dot{h}$ to be eventually constant; (b) $(p,\dot{q})$ forces that $\dot{h}$ is not eventually constant. We only prove (b) since the argument for (a) is similar and slightly simpler. 

Since $\dot{\mathbb{Q}}$ is forced to be $\omega^\omega$-bounding, also $SP^*(\mathcal{F})\ast \dot{\mathbb{Q}}$ is $\omega^\omega$-bounding, and so there is a condition $(p_0,\dot{r}_0)\leq (p,\dot{q})$ and a function $f\in\omega^\omega$ such that
	for all $n\in\omega$, $n+1<f(n)$ and \[ (p_0,\dot{r}_0)\Vdash \exists i,j\in [f(n),f(n+1)) \ \dot{h}(i)\neq \dot{h}(j).\] 
	For each $n\in\omega$ define \[ \dot{W}_n=\bigcap_{j\in [f(n),f(n+1))}\dot{R}_j^{\dot{h}(j)}.\] Clearly, $\dot{W}_n$ is forced to be in $\dot{\mathcal{V}}$. We will show that $\{\dot{W}_n\colon n\in\omega\}$ has no pseudo-intersection in
	$\dot{\mathcal{V}}$. Indeed, let $\dot{Z}$ be a name for a pseudointersection of $\{\dot{W}_n\colon n\in\omega\}$. We may assume that there is $g
\in\omega^\omega$ in the ground model such that $(p_0,\dot{r}_0)$ forces $\dot{Z}\setminus g(n)\subseteq\dot{W}_n$ for each $n$ (extending and renaming $(p_0,\dot{r}_0)$ if needed). 
	
Again, we will play the P-filter game. We suggest the following strategy to Adam. Let $q_0=p_0$ and let $k_0>g(0)$. 
	At stage $n$, after defining $q_n$ and $k_n > g(n)$, Adam plays the set $A_n=spt_{f(n+1)}(q_n)\setminus k_n$. Eve responds with a finite set $F_n=\{a_0^n<\ldots<a_{m_n}^n\}\subseteq A_n$. We construct a sequence of decreasing conditions
	$\{p_j^n\colon j\leq m_n\}$ as follows. For each $\sigma \in q_n^{[a_0^n]}$ and $s \in 2^{n+1}$ using the fact that $a_0^n\in spt_{f(n+1)}(q_n)$ we can find $r_s^\sigma \in succ_{q_n}(\sigma)$ such that 
	\begin{itemize}
		\item $s \subseteq r_{s}^\sigma$, and
		\item for all $l\in [f(n),f(n+1))$ we have $r_s^\sigma(l)=0$.
	\end{itemize}
	Define \[ p_0^n=\bigcup\{q_n\upharpoonright (\sigma ^\frown r_s^\sigma) \colon \sigma \in q_n^{[a_0^n]}\land s \in 2^{n+1}\}. \]
	Notice that for each $j\in [f(n), f(n+1))$
	\[ (p_0^n, \dot{r}_0) \Vdash a^n_0 \notin \dot{R}_j \]
	and since there is $i\in [f(n), f(n+1))$ such that $(p_0, \dot{r}_0)\Vdash \dot{h}(i)=0$ we have also
	\[ (p_0^n, \dot{r}_0) \Vdash a^n_0 \notin \dot{W}_n. \]
Now, having defined $p_{j}^n$, for each $\sigma \in (p_{j}^n)^{[a_{j+1}^n]}$ and $s\in 2^{n+1}$, let $r_s^\sigma \in succ_{p^n_j}(\sigma)$ be such that 
\begin{itemize}
	\item $s \subseteq r_s^\sigma$, and 
	\item for all $l\in[f(n),f(n+1))$, $r_s^\sigma(l)=0$.
\end{itemize}
Define \[ p_{j+1}^n=\bigcup\{p_j^n\upharpoonright (\sigma ^\frown r_s^\sigma)\colon \sigma \in (p^n_j)^{[a_{j+1}^n]}\land s \in 2^{n+1}\}.\] 
Finally, let $q_{n+1}=p_{m_n}^n$, choose $k_{n+1}>\max\{g(n+1),\max(F_n)\}$, and notice that 
	\[ (q_{n+1}, \dot{r}_0) \Vdash F_n \cap \dot{W}_n = \emptyset. \]

According to Theorem \ref{laflamme} there is a run of the game in which the above strategy fails. Let $\{q_n\colon n\in\omega\}$ and $\{F_n\colon n\in\omega\}$ be the sequences constructed along such a run. Then $X=\bigcup_{n\in\omega}
F_n\in\mathcal{F}$. Define $q_\omega=\bigcap_{n\in\omega}q_n$ and notice that, as in the proof of Theorem \ref{omega-omega}, $q_\omega$ is a condition.  Then for each $n$
\[ (q_\omega,\dot{r}_0)\Vdash F_n \cap \dot{W}_n=\emptyset. \] 
But, as $\min F_n > g(n)$ and $(p_0, \dot{r}_0) \Vdash \dot{Z} \setminus g(n) \subseteq \dot{W}_n$ it means that
\[ (q_\omega,\dot{r}_0)\Vdash F_n \cap \dot{Z}=\emptyset. \] 
So, $(q_\omega,\dot{r}_0)$ forces $\dot{Z}$ to be outside $\dot{\mathcal{V}}$.
\end{proof}

Before the proof of the main theorem, we need one more fact: if we force with a random forcing over a model $V$, then every $\mathbb{B}_\kappa$-name for a non-principal ultrafilter on $\omega$ contains a substantial portion of a ground model ultrafilter. 

\begin{prop}\label{ppmeasure} Suppose that $\dot{\mathcal{V}}$ is a $\mathbb{B}_\kappa$-name for an ultrafilter on $\omega$. Let $p\in \mathbb{B}_\kappa$. Then,  there is a measure $\mu$ on $\omega$, vanishing on points, such that 
	\[ 	\{F\subseteq \omega\colon \mu(F)=1\} \subseteq \dot{\mathcal{V}}[p] \ ( = \{A\subseteq \omega\colon p \Vdash A \in \dot{\mathcal{V}} \}). `\]
Additionally, if $p$ forces that $\dot{\mathcal{V}}$ is a P-point, then $\mu$ can be assumed to be a P-measure.
\end{prop}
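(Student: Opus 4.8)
The plan is to define $\mu$ as the conditional Boolean-value measure of $\dot{\mathcal{V}}$ below $p$. Since $\mathbb{B}_\kappa=\mathrm{Bor}(2^\kappa)/_{\lambda_\kappa=0}$ is a measure algebra, $\lambda_\kappa$ descends to a strictly positive countably additive measure on $\mathbb{B}_\kappa$, and $\lambda_\kappa(p)>0$. For $A\subseteq\omega$ in $V$ I would set
\[ \mu(A)=\frac{\lambda_\kappa\big(p\wedge\llbracket A\in\dot{\mathcal{V}}\rrbracket\big)}{\lambda_\kappa(p)}. \]
First I would check that $\mu$ is a vanishing-on-points finitely additive probability measure. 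Probability is clear from $\llbracket\omega\in\dot{\mathcal{V}}\rrbracket=1$; vanishing on points follows because $\dot{\mathcal{V}}$ is forced to be non-principal, so $\llbracket\{n\}\in\dot{\mathcal{V}}\rrbracket=0$. For finite additivity, if $A\cap B=\emptyset$ then, since $\dot{\mathcal{V}}$ is forced to be an ultrafilter, $\llbracket A\cup B\in\dot{\mathcal{V}}\rrbracket=\llbracket A\in\dot{\mathcal{V}}\rrbracket\vee\llbracket B\in\dot{\mathcal{V}}\rrbracket$, while $\llbracket A\in\dot{\mathcal{V}}\rrbracket\wedge\llbracket B\in\dot{\mathcal{V}}\rrbracket\leq\llbracket A\cap B\in\dot{\mathcal{V}}\rrbracket=0$, so the two Boolean values are disjoint and $\lambda_\kappa$ adds over them. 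Finally, $\mu(F)=1$ means $\lambda_\kappa(p\wedge\llbracket F\in\dot{\mathcal{V}}\rrbracket)=\lambda_\kappa(p)$, i.e. $p\leq\llbracket F\in\dot{\mathcal{V}}\rrbracket$, i.e. $F\in\dot{\mathcal{V}}[p]$; this gives the required inclusion.

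For the additional clause I would assume $p\Vdash\dot{\mathcal{V}}$ is a P-point and use the functional reformulation of being a P-measure from Proposition \ref{pmeasfunc}. Given a decreasing sequence $(A_n)$ in $V$, put $b_\infty=\bigwedge_n\llbracket A_n\in\dot{\mathcal{V}}\rrbracket$ and $q=p\wedge b_\infty$; by continuity of $\lambda_\kappa$ one has $\lambda_\kappa(q)=L\cdot\lambda_\kappa(p)$, where $L=\lim_n\mu(A_n)$. For any increasing $f\in\omega^\omega\cap V$ the set $Z_f=\bigcup_n\big(A_n\cap[f(n),f(n+1))\big)$ is a pseudointersection of $(A_n)$ (indeed $Z_f\setminus f(n)\subseteq A_n$), and since $Z_f\in\dot{\mathcal{V}}$ forces every $A_n\in\dot{\mathcal{V}}$, we get $\llbracket Z_f\in\dot{\mathcal{V}}\rrbracket\leq b_\infty$, hence $\mu(Z_f)\leq L$ automatically. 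So the whole problem reduces to producing one such $f$ realizing the reverse inequality $\mu(Z_f)\geq L$.

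To produce $f$ I would work below $q$, where $\dot{\mathcal{V}}$ is a P-point containing every $A_n$: fix a name $\dot f$ such that $q$ forces $\dot f$ to witness a pseudointersection $\dot Z_{\dot f}=\bigcup_n\big(A_n\cap[\dot f(n),\dot f(n+1))\big)$ lying in $\dot{\mathcal{V}}$. As $\mathbb{B}_\kappa$ is $\omega^\omega$-bounding and ccc, the conditions below $q$ that dominate $\dot f$ by some ground model function are dense, so I can pick a countable maximal antichain $\{q_i:i\in\omega\}$ below $q$ together with ground model functions $g_i$ satisfying $q_i\Vdash\dot f\leq g_i$. Diagonalizing, I let $g$ be an increasing ground model function with $g(n)\geq\max_{i\leq n}g_i(n)$, so that $q_i\Vdash\dot f(n)\leq g(n)$ for all $n\geq i$. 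The interval-partition comparison used in the proof of Proposition \ref{P-needed} then shows $q_i\Vdash\dot Z_{\dot f}\subseteq^* Z_g$, whence $q_i\Vdash Z_g\in\dot{\mathcal{V}}$; since $\{q_i\}$ is a maximal antichain below $q$, this gives $q\leq\llbracket Z_g\in\dot{\mathcal{V}}\rrbracket$ and therefore $\mu(Z_g)\geq\lambda_\kappa(q)/\lambda_\kappa(p)=L$. Taking $f=g$ completes the P-measure claim.

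The main obstacle is precisely the passage from the name $\dot f$ to a single ground model function: $\omega^\omega$-bounding only yields domination below a condition, so there need be no single ground model $g$ with $q\Vdash\dot f\leq g$ on all of $q$. The resolution is to settle for eventual ($\leq^*$) domination on each piece of a ccc (hence countable) maximal antichain and to exploit that the interval-partition monotonicity ``$g\geq\dot f\Rightarrow Z_g\supseteq\dot Z_{\dot f}$'' is robust enough to survive both the weakening to $\leq^*$ (which yields $\subseteq^*$, still enough to keep $Z_g$ in the ultrafilter) and the gluing over the antichain; the $\omega^\omega$-bounding of the random algebra is exactly what makes this diagonalization available.
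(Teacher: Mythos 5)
Your proof is correct. The first half coincides with the paper's: the same Boolean-value measure $\mu(A)=\lambda_\kappa\big(p\wedge\llbracket A\in\dot{\mathcal{V}}\rrbracket\big)/\lambda_\kappa(p)$ and the same verification that $\mu(F)=1$ is equivalent to $p\leq\llbracket F\in\dot{\mathcal{V}}\rrbracket$. For the P-measure clause, however, you take a genuinely different route. The paper argues by contradiction: invoking \cite[Lemma 3.3]{p_measures_random}, it reduces to showing that some pseudointersection of $(A_n)$ has positive $\mu$-measure, then works below $q=p\wedge\bigwedge_n\llbracket A_n\in\dot{\mathcal{V}}\rrbracket$, where forced P-pointness yields a name $\dot B$ for a pseudointersection lying in $\dot{\mathcal{V}}$, and $\omega^\omega$-bounding below a \emph{single} condition $r\leq q$ yields a ground-model pseudointersection $A$ with $r\Vdash \dot B\subseteq^* A$, whence $r\Vdash A\in\dot{\mathcal{V}}$ and $\mu(A)>0$, a contradiction. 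You instead compute the exact value $L=\lim_n\mu(A_n)$ directly: the upper bound $\mu(Z)\leq L$ for \emph{every} pseudointersection $Z$ is automatic (since $\llbracket Z\in\dot{\mathcal{V}}\rrbracket\leq\bigwedge_n\llbracket A_n\in\dot{\mathcal{V}}\rrbracket$), and the lower bound comes from your gluing step: a countable (by ccc) maximal antichain $\{q_i\}$ below $q$ of conditions dominating the name $\dot f$, diagonalized into one increasing ground-model $g$ with $q_i\Vdash\dot f\leq^* g$, hence $q_i\Vdash\dot Z_{\dot f}\subseteq^* Z_g$, hence $q\leq\llbracket Z_g\in\dot{\mathcal{V}}\rrbracket$ and $\mu(Z_g)\geq\lambda_\kappa(q)/\lambda_\kappa(p)=L$. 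What each approach buys: yours is self-contained, replacing the cited reduction lemma by the antichain-plus-diagonalization argument (this is exactly where ccc enters, which the paper's proof of this proposition never needs, since a single condition suffices once the problem has been reduced to ``positive measure''); the paper's is shorter modulo the citation. Two cosmetic points for a final write-up: $\leq^*$-domination only gives $\dot Z_{\dot f}\subseteq^* Z_g$, which suffices because $\dot{\mathcal{V}}$ is forced to be a non-principal ultrafilter and hence closed under almost-supersets; and in the degenerate case $L=0$ (where $q$ may be the zero condition) any pseudointersection works, which is implicit in your reduction sentence but should be said before ``working below $q$''.
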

\begin{proof}
	Let $\dot{\mathcal{V}}$ be a name for an ultrafilter. Let $\mu$ be a measure on $\omega$ defined by
	\[ \mu(A)  = \lambda (\llbracket A \in \dot{\mathcal{V}} \rrbracket \cap p)/\lambda(p). \]
	Clearly, if $\mu(A)=1$, then $p \Vdash A\in \dot{\mathcal{V}}$ and so $A\in \dot{\mathcal{V}}[p]$. 

  Now, suppose that $p\Vdash \dot{\mathcal{V}}$ is a P-point.
	Let $(A_n)$ be a decreasing sequence of subsets of $\omega$. Suppose, towards the contradiction, that $\lim_{n\to \infty} \mu(A_n) = a>0$ and there is no pseudointersection $A$ of $(A_n)$ such that $\mu(A)=a$. In fact, we may assume that there is
	no pseudointersection $A$ of $(A_n)$ such that $\mu(A)>0$ (see \cite[Lemma 3.3]{p_measures_random}). Let $p = \bigwedge
	\llbracket A_n \in \dot{\mathcal{U}} \rrbracket$. Then $p\ne 0$ and $p\Vdash \forall n \ A_n \in \dot{\mathcal{V}}$. So, there is $q\leq p$ such that \[ q \Vdash \exists \dot{B} \in \dot{\mathcal{U}} \ \forall n \ \dot{B} \subseteq^* A_n. \]
	By the fact that $\mathbb{B}_\kappa$ is $\omega^\omega$-bounding,  there is $r\leq q$ and $A\in V$, $p \Vdash \dot{B}\subseteq A$ such that
	\[ r \Vdash  \forall n \ A \subseteq^* A_n. \] Then $r\leq \llbracket A \in \dot{\mathcal{U}}\rrbracket$ and so $\mu(A)>0$, a contradiction.
\end{proof}

In the proof of the main theorem we will only need the first part of the above proposition, the part about P-measure will be needed for other purposes. Actually, we will use only the following corollary.

\begin{cor}\label{nonme} Suppose that $\dot{\mathcal{V}}$ is a $\mathbb{B}_\kappa$-name for an ultrafilter on $\omega$. Then $\dot{\mathcal{V}}[p]$ is non-meager for each $p\in \mathbb{B}_\kappa$.
\end{cor}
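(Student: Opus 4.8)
The plan is to read this off directly from Propositions \ref{ppmeasure} and \ref{Daria}; in fact this is presumably the reason the first (measure-theoretic) half of Proposition \ref{ppmeasure} was isolated. First I would fix $p\in\mathbb{B}_\kappa$ and apply Proposition \ref{ppmeasure} to the name $\dot{\mathcal{V}}$, obtaining a measure $\mu$ on $\omega$, vanishing on points, such that
\[ \mathcal{F}:=\{F\subseteq\omega\colon \mu(F)=1\}\subseteq \dot{\mathcal{V}}[p]. \]
Since $\mu$ is a probability measure vanishing on points, $\mathcal{F}$ is a proper filter (it cannot contain $\emptyset$).

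Next I would note that $\dot{\mathcal{V}}[p]$ is itself a filter on $\omega$: because $p$ forces $\dot{\mathcal{V}}$ to be a (ultra)filter, the family of sets that $p$ forces into $\dot{\mathcal{V}}$ is closed under finite intersections and supersets, so $\dot{\mathcal{V}}[p]$ is a filter containing $\mathcal{F}$. Then I would invoke Proposition \ref{Daria}, which tells us precisely that the filter $\mathcal{F}$ of $\mu$-measure-one sets is non-meager. The final step is the elementary topological remark that, viewing filters as subsets of $\mathcal{P}(\omega)\cong 2^\omega$, any set containing a non-meager set is non-meager (meagerness is downward closed under inclusion). Since $\dot{\mathcal{V}}[p]\supseteq \mathcal{F}$ and $\mathcal{F}$ is non-meager, $\dot{\mathcal{V}}[p]$ is non-meager, which is what we wanted.

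There is essentially no genuine obstacle here: all the work has already been carried out in Proposition \ref{ppmeasure} (whose proof exploits that $\mathbb{B}_\kappa$ is $\omega^\omega$-bounding together with the Boolean-value definition of $\mu(A)=\lambda(\llbracket A\in\dot{\mathcal{V}}\rrbracket\cap p)/\lambda(p)$). The one point one must not overlook is that the bare inclusion $\mathcal{F}\subseteq\dot{\mathcal{V}}[p]$ already suffices, precisely because non-meagerness passes upward to supersets; one does \emph{not} need $\dot{\mathcal{V}}[p]$ itself to be the measure-one filter of some measure. This is why Corollary \ref{nonme} is a true corollary rather than a theorem requiring its own argument.
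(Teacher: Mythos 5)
Your proof is correct and follows essentially the same route as the paper: both apply Proposition \ref{ppmeasure} to get a measure $\mu$ whose measure-one filter sits inside $\dot{\mathcal{V}}[p]$, then invoke Proposition \ref{Daria} together with the fact that a superset of a non-meager set is non-meager. The only difference is cosmetic — the paper folds the upward-closure step into its appeal to Proposition \ref{Daria}, while you state it explicitly.
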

\begin{proof} Suppose that $\mathcal{F}$ is a filter containing a family $\{A\subseteq \omega\colon \mu(A)=1\}$ for some measure on $\omega$. According to Proposition \ref{Daria} the filter $\mathcal{F}$ is non-meager and so, thanks to Proposition
	\ref{ppmeasure}, we are	done.
\end{proof}

\begin{rem} Recall that a filter $\mathcal{F}$ is called ccc if $\mathcal{P}(\omega)/\mathcal{F}^*$ is ccc. In the proof of Proposition \ref{Daria} we actually show that every ccc filter is nonmeager. It is not difficult to prove that if $\mathbb{P}$ is a ccc forcing notion,
	and $\dot{\mathcal{U}}$ is a $\mathbb{P}$-name for an ultrafilter on $\omega$, then there is a ground model ccc filter $\mathcal{F}$ such that $\dot{\mathcal{U}}$ extends $\mathcal{F}$ (see e.g. \cite{shelppoint}). As
	$\mathbb{B}_\kappa$ is obviously ccc, we could use this fact to prove Corollary \ref{nonme} instead of Proposition \ref{ppmeasure}.
\end{rem}

Now we are ready to prove the main theorem of this section.

\begin{proof}(of Theorem \ref{mmain})
	Assume $V$ is a model of $\mathsf{ZFC+CH+\diamondsuit(S)}$, where $S\subseteq\omega_2$ is stationary. Let $(A_\alpha:\alpha\in S)$ be a $\diamondsuit(S)$-guessing sequence. Let $\kappa\ge\omega_2$ be an uncountable regular cardinal. Let
	$\mathcal{U}$ be a selective ultrafilter and $\mathcal{V}$ a P-point which is not Rudin-Blass above $\mathcal{U}$. Define a countable support iteration $(\mathbb{P}_\alpha,\dot{\mathbb{Q}}_\alpha\colon \alpha<\omega_2)$ as follows:
\begin{enumerate}
	\item $\mathbb{P}_0=SP^*(\mathcal{U})$.
\item If $\alpha\notin S$, define $\dot{\mathbb{Q}}_\alpha$ to be the trivial forcing. 
\item If $\alpha\in S$, and $A_\alpha$ codifies a $\mathbb{P}_\alpha$-name for a non-meager P-filter $\dot{\mathcal{F}}$ which is not nearly coherent with $\mathcal{U}$, define $\mathbb{P}_\alpha\Vdash\dot{\mathbb{Q}}_\alpha=SP^*(\dot{\mathcal{F}})$; otherwise, let $\dot{\mathbb{Q}}_\alpha$ be the trivial forcing.
\end{enumerate}
Let $\mathbb{P}_{\omega_2}$ the resulting forcing, and let $\dot{\mathbb{B}}_\kappa$ be a $\mathbb{P}_{\omega_2}$-name for the forcing adding $\kappa$ random reals. Our working forcing is $\mathbb{P}_{\omega_2}\ast \dot{\mathbb{B}}_\kappa$.

Let $G\ast H$ be a $\mathbb{P}_{\omega_2}\ast \dot{\mathbb{B}}_\kappa$-generic filter, and for each $\alpha<\omega_2$ let $G_\alpha$ be the restriction of $G$ to $\mathbb{P}_\alpha$. Since all steps of the iteration preserve $\mathcal{U}$ (by Lemma
	\ref{preservation_destruction}), we have that  $\mathcal{U}$ continues to generate a selective ultrafilter in $V[G]$. So, by Theorem \ref{p_measures_random}, there is a P-measure in $V[G\ast H]$.

Now, suppose for the sake of contradiction, that there is a P-point $\mathcal{F}$ in $V[G*H]$. 
	By Lemma \ref{not_coherent} we have that in $V[G]$ the P-filter $\dot{\mathcal{F}}[1]$ is not nearly coherent with $\mathcal{U}$ and by Corollary \ref{nonme} it is non-meager. Let $\dot{\mathcal{H}}$ be a $\mathbb{P}_{\omega_2}$-name for the filter $\dot{\mathcal{F}}[1]$. For each
	$\alpha\in\omega_2$, define $\dot{\mathcal{H}}_\alpha=\dot{\mathcal{H}}\upharpoonright \mathbb{P}_\alpha$. Let 
	\[ C_0=\{\alpha\in\omega_2\colon \mathbb{P}_\alpha\Vdash\dot{\mathcal{H}}_\alpha=\dot{\mathcal{H}}\cap V[G_\alpha]\} \] and let
	\[ C_1=\{\alpha\in\omega_2:\mathbb{P}_\alpha \Vdash \text{ $\dot{\mathcal{H}}_\alpha\in V[G_\alpha]$ is not nearly coherent with }\mathcal{U}\}.\] 

The set $C_0\cap C_1$ is a club set, so there is $\alpha\in S\cap C_0\cap C_1$ such that $A_\alpha$ encodes a $\mathbb{P}_\alpha$-name for the filter $\dot{\mathcal{H}}_\alpha$. Thus, we have $\mathbb{P}_\alpha\Vdash
	\dot{\mathbb{Q}}_\alpha=SP^*(\dot{\mathcal{H}}_\alpha)$. By Lemma \ref{preservation_destruction} we have that $\mathbb{P}_{\alpha+1}$ forces that $\dot{\mathcal{H}}_{\alpha}$ cannot be extended to a P-point in any further
	$\omega^\omega$-bounding extensions, so it cannot be extended to a P-point in $V[G\ast H]$. Since $\dot{\mathcal{H}}_\alpha\subseteq\dot{\mathcal{F}}[1]\subseteq\mathcal{F}$, we have that $\mathcal{F}$ is not a P-point, which is a contradiction.

Since $\mathbb{B}_\kappa$ is ccc and adds $\kappa$ new reals, all cardinal numbers are preserved and the continuum is at least $\kappa$ in $V[G\ast H]$.

\end{proof}

\begin{cor}
It is relatively consistent with $\mathsf{ZFC}$ that there is a P-measure but there is no P-point (and so, there is no ultrafilter density that is a P-measure).
\end{cor}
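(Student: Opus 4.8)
The plan is to read this statement directly off Theorem \ref{mmain} together with the structural corollary on ultrafilter densities proved earlier, so there is essentially nothing new to construct. First I would invoke Theorem \ref{mmain}, which already produces (consistently) a model in which there is a P-measure and no P-point. Since the present corollary merely drops the clause that $2^{\aleph_0}$ may be taken arbitrarily large, the first two conjuncts are immediate from that theorem.

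For the parenthetical assertion I would argue by contradiction. Suppose that in the model of Theorem \ref{mmain} there were an ultrafilter density $\nu$ that happens to be a P-measure. Recall the corollary established in Section \ref{measures-on-omega} (the one stated just after the remark following Proposition \ref{P-hered}), asserting that a P-measure which is an ultrafilter density yields a P-point. Concretely, if $\nu$ is induced by an ultrafilter $\mathcal{U}$ via $\nu(A)=\lim_{n\to\mathcal{U}}|A\cap n|/n$, then the ultrafilter $\mathcal{V}$ extracted in the proof of Proposition \ref{ult-extend-meas-nd} satisfies $\delta_{\mathcal{V}}\leq_{RK}\nu$, and by Proposition \ref{P-hered} (heredity of the P-property under the Rudin-Keisler ordering) $\mathcal{V}$ is then a P-point. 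This contradicts the absence of P-points guaranteed by Theorem \ref{mmain}. Hence no ultrafilter density in this model can be a P-measure.

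I do not expect any genuine obstacle here: all the substance lives in Theorem \ref{mmain} and in the Rudin-Keisler heredity machinery of Section \ref{measures-on-omega}. The only point I would state with care is that the implication \emph{``P-measure which is an ultrafilter density $\Rightarrow$ P-point''} is precisely the contrapositive of what is needed, so that combining it with the ``no P-point'' conclusion of Theorem \ref{mmain} immediately rules out ultrafilter densities as P-measures and closes the argument.
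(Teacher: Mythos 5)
Your proposal is correct and matches the paper's (implicit) argument exactly: the first two conjuncts are read off Theorem \ref{mmain}, and the parenthetical clause follows by contraposition from the corollary in Section \ref{measures-on-omega} stating that a P-measure which is an ultrafilter density yields a P-point (via Proposition \ref{ult-extend-meas-nd} and Proposition \ref{P-hered}). Nothing further is needed.
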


We will finish this section with some remarks on the random forcing.

 As we have mentioned at the beginning of this section, the proof of Theorem \ref{mmain} could be substantially shortened if we know that the random forcing cannot add a 'new' P-point (i.e. that every P-point in the random extension extends a ground
	model P-point). However, we do not know that, nor do we know that this is not true. 

\begin{prob}\label{random_question} Is there a model $V$ such that in the random extension $V^\mathbb{B}$ there is a P-point which does not extend any P-point from $V$? 
\end{prob}

	Recall that it is still an open problem if there is a P-point in the classical random model. Kunen (see \cite{Jorg-problems}) proved that if we first add $\omega_1$ Cohen reals to the ground model
	and then add random reals, then in the resulting model there is a P-point. Dow (see \cite{Dow-Pfilters}) showed that this assertion holds also if we assume $\square_{\omega_1}$ in the ground model. In both cases, the constructed P-point is an extension of a
	P-point from the ground model.

	The proof of Theorem \ref{mmain} shows that consistently random forcings do not add 'new' P-points. Also, (the second part of) Proposition \ref{ppmeasure} says that if we add random reals to a model without P-measures, then we cannot add any  P-point. 

	So, there is a chance that Problem \ref{random_question} has a negative solution. On the other hand, if the answer is positive, the proof could bring us closer to the solution of the problem of the existence of P-point in the random model.

	Finally, notice a peculiar symmetry of Theorem \ref{p_measures_random} and Proposition \ref{ppmeasure}: every P-point in the ground model induces naturally a P-measure in the random extension, but also every P-point in the random extension
	induces in a natural way a P-measure in the ground model.

\bibliographystyle{alpha}
\bibliography{bib-ppoint}

\begin{thebibliography}{BFPRN01}

\bibitem[BFPRN01]{blassple}
Andreas Blass, Ryszard Frankiewicz, Grzegorz Plebanek, and Czes{\l}aw
  Ryll-Nardzewski.
\newblock A note on extensions of asymptotic density.
\newblock {\em Proc. Amer. Math. Soc.}, 129(11):3313--3320, 2001.

\bibitem[Bla86a]{ncf1}
Andreas Blass.
\newblock Near coherence of filters. {I}. {C}ofinal equivalence of models of
  arithmetic.
\newblock {\em Notre Dame J. Formal Logic}, 27(4):579--591, 1986.

\bibitem[Bla86b]{coherentBlass}
Andreas Blass.
\newblock Near coherence of filters. {I}. {C}ofinal equivalence of models of
  arithmetic.
\newblock {\em Notre Dame J. Formal Logic}, 27(4):579--591, 1986.

\bibitem[Bla10]{Blass}
Andreas Blass.
\newblock Combinatorial cardinal characteristics of the continuum.
\newblock In M.~Foreman and A.~Kanamori, editors, {\em Handbook of Set Theory},
  pages 395--491. Springer, 2010.

\bibitem[BNS23]{p_measures_random}
Piotr Borodulin-Nadzieja and Damian Sobota.
\newblock There is a {P}-measure in the random model.
\newblock {\em Fund. Math.}, 262(3):235--257, 2023.

\bibitem[Bre17]{Jorg-problems}
J\"{o}rg Brendle.
\newblock Some problems in forcing theory: large continuum and generalized
  cardinal invariants.
\newblock In {\em Infinite combinatorics and forcing theory}, volume 2042 of
  {\em S\^{u}ri kaiseki kenky\^{u}sho k\^{o}ky\^{u}roku}, pages 1--16. 2017.

\bibitem[CG19]{chod}
David Chodounsk\'{y} and Osvaldo Guzm\'{a}n.
\newblock There are no {P}-points in {S}ilver extensions.
\newblock {\em Israel J. Math.}, 232(2):759--773, 2019.

\bibitem[Cho11]{David-thesis}
David Chodounsky.
\newblock On the {K}atowice problem.
\newblock {\em Ph.D. thesis}, 2011.

\bibitem[Dow20]{Dow-Pfilters}
Alan Dow.
\newblock P-filters and {C}ohen, random, and {L}aver forcing.
\newblock {\em Topology Appl.}, 281:107200, 16, 2020.

\bibitem[Eis01]{todd}
Todd Eisworth.
\newblock Near coherence and filter games.
\newblock {\em Arch. Math. Logic}, 40(3):235--242, 2001.

\bibitem[Fre03]{Fremlin-MT4}
David~H. Fremlin.
\newblock {\em Measure theory. {V}ol. 4}.
\newblock Torres Fremlin, Colchester, 2003.
\newblock Measure algebras,.

\bibitem[FT79]{FremlinTalagrand}
David~H. Fremlin and Michel Talagrand.
\newblock A decomposition theorem for additive set-functions, with applications
  to {P}ettis integrals and ergodic means.
\newblock {\em Math. Z.}, 168(2):117--142, 1979.

\bibitem[Gre19]{grebik}
Jan Greb\'{\i}k.
\newblock Ultrafilter extensions of asymptotic density.
\newblock {\em Comment. Math. Univ. Carolin.}, 60(1):25--37, 2019.

\bibitem[Hal17]{halb12}
Lorenz~J. Halbeisen.
\newblock {\em Combinatorial set theory}.
\newblock Springer Monographs in Mathematics. Springer, Cham, 2017.
\newblock With a gentle introduction to forcing, Second edition of [
  MR3025440].

\bibitem[Kun76]{Kunen}
Kenneth Kunen.
\newblock Some points in {$\beta N$}.
\newblock {\em Math. Proc. Cambridge Philos. Soc.}, 80(3):385--398, 1976.

\bibitem[Kun22]{Kunisada}
Ryoichi Kunisada.
\newblock On the additive property of finitely additive measures.
\newblock {\em J. Theoret. Probab.}, 35(3):1782--1794, 2022.

\bibitem[Laf96]{Laflamme}
Claude Laflamme.
\newblock Filter games and combinatorial properties of strategies.
\newblock In {\em Set theory ({B}oise, {ID}, 1992--1994)}, volume 192 of {\em
  Contemp. Math.}, pages 51--67. Amer. Math. Soc., Providence, RI, 1996.

\bibitem[Mek84]{mekl}
Alan~H. Mekler.
\newblock Finitely additive measures on {${\rm {\bf N}}$} and the additive
  property.
\newblock {\em Proc. Amer. Math. Soc.}, 92(3):439--444, 1984.

\bibitem[Rud57]{Rudin}
Walter Rudin.
\newblock Continuous functions on compact spaces without perfect subsets.
\newblock {\em Proc. Amer. Math. Soc.}, 8:39--42, 1957.

\bibitem[She17]{pif}
Saharon Shelah.
\newblock {\em Proper and Improper Forcing}.
\newblock Perspectives in Logic. Cambridge University Press, 2 edition, 2017.

\bibitem[Wim82]{shelppoint}
Edward~L. Wimmers.
\newblock The {S}helah {$P$}-point independence theorem.
\newblock {\em Israel J. Math.}, 43(1):28--48, 1982.

\end{thebibliography}

\end{document}